\crefname{lem}{Lemma}{Lemmas}
\crefname{thm}{Theorem}{Theorems}
\crefname{cor}{Corollary}{Corollaries}
\crefname{prop}{Proposition}{Propositions}
\crefname{conj}{Conjecture}{Conjectures}
\crefname{rmk}{Remark}{Remarks}
\crefname{openproblem}{Open Problem}{Open Problems}
\setlist[itemize]{topsep=0ex,itemsep=0ex,parsep=0.4ex}
\setlist[enumerate]{topsep=0ex,itemsep=0ex,parsep=0.4ex}
\newcommand{\defn}[1]{\textcolor{Maroon}{\emph{#1}}}
\newcommand{\GG}{\mathcal{G}}
\newcommand{\NN}{\mathbb{N}}
\def\NAT@spacechar{~}
\renewcommand{\geq}{\geqslant}
\renewcommand{\leq}{\leqslant}
\DeclareMathOperator{\polylog}{polylog}
\DeclareMathOperator{\tw}{tw}
\DeclareMathOperator{\igrid}{igrid}
\DeclareMathOperator{\had}{had}
\newcommand*{\boundary}{\mathsf{bd}}
\newcommand*{\Root}{\mathsf{root}}
\DeclareMathOperator{\GF}{GF}
\definecolor{ao(english)}{rgb}{0.0, 0.5, 0.0}
\renewcommand{\thefootnote}{\fnsymbol{footnote}}
\theoremstyle{plain}
\newtheorem{thm}{Theorem}
\newtheorem{lem}[thm]{Lemma}
\newtheorem{cor}[thm]{Corollary}
\newtheorem{openproblem}[thm]{Open Problem}
\theoremstyle{definition}
\newtheorem{conj}[thm]{Conjecture}
\newtheorem*{claim}{Claim}
\newenvironment{subproof}[1][Proof.]{%
    \begin{proof}[{#1}]%
        }{%
    \end{proof}}
\begin{document}

\title{\bf\fontsize{16pt}{0pt}\selectfont  Treewidth, Hadwiger Number, and Induced Minors}

\author{
Rutger Campbell\,\footnotemark[1] \qquad
James Davies\,\footnotemark[3] \qquad
Marc Distel\,\footnotemark[2] \qquad
Bryce Frederickson\,\footnotemark[4] \\
J.~Pascal Gollin\,\footnotemark[5] \qquad
Kevin Hendrey\,\footnotemark[2] \qquad
Robert~Hickingbotham\,\footnotemark[7] \\
Sebastian Wiederrecht\,\footnotemark[6] \qquad
David~R.~Wood\,\footnotemark[2]\qquad
Liana Yepremyan\,\footnotemark[4] 
}

\footnotetext[1]{Discrete Mathematics Group, Institute for Basic Science (IBS), Daejeon, Republic~of~Korea (\texttt{rutger@ibs.re.kr}). Research of R.C.\ supported by the Institute for Basic Science (IBS-R029-C1). }

\footnotetext[3]{University of Cambridge, Cambridge, UK (\texttt{jgd37@cam.ac.uk}).}

\footnotetext[2]{School of Mathematics, Monash University, Melbourne, Australia (\texttt{\{marc.distel,kevin.hendrey1,david.wood\}@monash.edu}). Research of D.W.\ supported by the Australian Research Council. Research of M.D.\ supported by Australian Government Research Training Program Scholarships. Research of K.H.\ supported by the Institute for Basic Science (IBS-R029-C1) and by the Australian Research Council.}

\footnotetext[4]{Department of Mathematics, Emory University, USA (\texttt{\{bryce.frederickson,liana.yepremyan\}@emory.edu})}

\footnotetext[5]{FAMNIT, University of Primorska, Slovenia (\texttt{pascal.gollin@famnit.upr.si}). Research of J.P.G.\ supported by the Institute for Basic Science (IBS-R029-Y3) and in part by the Slovenian Research and Innovation Agency (research project N1-0370).}

\footnotetext[6]{KAIST, School of Computing, South Korea (\texttt{swiederrecht@kaist.ac.kr}). Reserach of S.W.\ supported by supported by the Institute for Basic Science (IBS-R029-C1) and the KAIST Start-Up Grant}

\footnotetext[7]{Univ. Lyon, ENS de Lyon, UCBL, CNRS, LIP, France (\texttt{robert.hickingbotham@ens-lyon.fr}). Research of R.H.\ partially supported by Australian Government Research Training Program Scholarships.}

\sloppy
	
\maketitle

\begin{abstract}
    Treewidth and Hadwiger number are two of the most important parameters in structural graph theory. 
    This paper studies graph classes in which large treewidth implies the existence of a large complete graph minor. 
    To formalise this, we say that a graph class~$\GG$ is ${(\tw,\had)}$-bounded if there is a function~$f$ (called the ${(\tw,\had)}$-bounding function) such that~${\tw(G) \leq f(\had(G))}$ for every graph~${G \in \GG}$. 
    We characterise ${(\tw,\had)}$-bounded graph classes as those that exclude some planar graph as an induced minor, and use this characterisation to show that every proper vertex-minor-closed class is ${(\tw,\had)}$-bounded. Furthermore, we demonstrate that any ${(\tw,\had)}$-bounded graph class has a ${(\tw,\had)}$-bounding function in~${O(\had(G)^9\polylog(\had(G)))}$. Our bound comes from the bound for the Grid Minor Theorem given by Chuzhoy and Tan, and any quantitative improvement to their result will lead directly to an improvement to our result.
    
   More strongly, we conjecture that every ${(\tw,\had)}$-bounded graph class has a linear ${(\tw,\had)}$-bounding function. 
    In support of this conjecture, we show that it holds for the class of outer-string graphs, and for a natural generalisation of outer-string graphs: intersection graphs of strings rooted at the boundary of a fixed surface. We also verify our conjecture for low-rank perturbations of circle graphs, which is an important step towards verifying it for all proper vertex-minor-closed classes. 
\end{abstract}



\newpage

\section{Introduction}
\label{sec:intro}

\renewcommand{\thefootnote}{\arabic{footnote}}

\footnotetext[1]{See Section~\ref{sec:prelims} for omitted definitions.}
\setcounter{footnote}{1}

Treewidth\footnotemark[1] is the standard measure of how similar a graph is to a tree, and is one of the most important parameters in structural graph theory. The celebrated Grid Minor Theorem of \citet{RS-V} states that every graph with large treewidth has a large grid minor. Since grids themselves have large treewidth and every planar graph is a minor of some grid, 
for any graph $H$, there is an integer $c_H$ such that every $H$-minor-free graph has treewidth at most $c_H$ if and only if $H$ is planar.
The situation can be different for $H$-minor-free graphs within a fixed hereditary class.
For example, \citet{HIMW24} showed that within the class of circle graphs (the intersection graphs of chords of a circle), $K_t$-minor-free graphs have treewidth less than~${12t}$. The \defn{Hadwiger number~$\had(G)$} of a graph $G$ is the maximum integer $t$ such that $K_t$ is a minor of $G$.
It turns out that there is a dichotomy between hereditary classes in which large treewidth implies large Hadwiger number, and hereditary classes that  contain planar graphs of arbitrarily large treewidth. 
This can be seen by combining the Grid Minor Theorem with the following folklore result (and it is a true dichotomy, since planar graphs have Hadwiger number at most~$4$).

\begin{thm}
    \label{thm:treewidthplanarminor}
    There is a function~${f \colon \mathbb{N}^2 \to \mathbb{N}}$ such that for any non-negative integers~$k$ and $t$, every graph which contains an~${f(k,t) \times f(k,t)}$-grid as a minor either contains~$K_t$ as a minor or contains a planar induced subgraph with treewidth at least $k$.
\end{thm}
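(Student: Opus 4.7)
The plan is to extract a planar induced subgraph of large treewidth from a sufficiently large grid minor, unless that grid minor is ``polluted'' by many extra edges, in which case these extra edges can be routed through the grid's disjoint paths to form a $K_t$-minor. To make the routing cleaner, I would first invoke the standard wall version of the Grid Minor Theorem: a large enough grid minor in $G$ yields a subgraph $W \subseteq G$ that is a subdivision of an $m \times m$ hexagonal wall, for some $m$ tending to infinity with the grid size. Walls are convenient because they are planar and subcubic, have treewidth $\Theta(m)$, and come equipped with well-separated vertex-disjoint horizontal and vertical paths that can be linked through the wall's routing structure.

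Next I would consider the induced subgraph $G[V(W)]$. Since $W \subseteq G[V(W)]$, the treewidth of $G[V(W)]$ is at least $\Omega(m)$. The \emph{extra edges} are those of $E(G[V(W)]) \setminus E(W)$. If there are none then $G[V(W)] = W$ is already a planar induced subgraph of treewidth $\Omega(m)$, and choosing $m \ge k$ completes the proof. Otherwise, I would partition $W$ into a grid of small sub-wall blocks of side length $r = r(k,t)$, and call an extra edge \emph{short} if both its endpoints lie in a single block, and \emph{long} otherwise. This sets up a dichotomy between the short and long extra edges, on which the remainder of the argument pivots.

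Case~A (few long edges): a pigeonhole argument yields a large sub-wall $W' \subseteq W$ whose vertex set is disjoint from the endpoints of all long edges; pruning further to remove blocks containing short edges still leaves a sub-wall $W''$ of size at least $k$, and $G[V(W'')] = W''$ is the desired planar induced subgraph. Case~B (many long edges): use the disjoint-paths structure of $W$ to assemble $\binom{t}{2}$ long extra edges into a subdivision of $K_t$, by iteratively selecting edges $e_1, e_2, \ldots$ whose endpoints can be linked through the untouched portion of $W$ by internally disjoint paths. The main obstacle is formalising the linkage argument in Case~B: showing that many well-separated ``transactions'' on a wall can always be routed into a $K_t$ topological minor. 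This is an inductive linkage lemma in walls, closely related to (but much simpler than) flat-wall arguments from Robertson--Seymour's structure theorem; the other cases reduce to pigeonhole bookkeeping and to the planarity of walls.
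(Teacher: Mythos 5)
Your high-level strategy mirrors the paper's --- tile a large planar structure into blocks and dichotomise on extra edges --- but there is a concrete gap in Case~A, and Case~B leans on an undeveloped linkage lemma. In Case~A you claim that after pigeonholing a sub-wall $W'$ disjoint from all long-edge endpoints, ``pruning further to remove blocks containing short edges still leaves a sub-wall $W''$ of size at least~$k$.'' Nothing bounds the number of short edges; every block of $W'$ may contain one, leaving nothing after pruning. The correct dichotomy (which the paper uses) is not on long edges alone: either some block has no extra edges at all --- in which case that block itself is the desired planar induced subgraph --- or every block has at least one extra edge, whose ``length'' is irrelevant. The paper shows that a single extra (jump) edge inside a block lets one route two crossing corner-to-corner paths within that block; contracting each block's crossing paths together with the connecting grid edges yields $P_{t+1}\boxtimes P_{t+1}$, which contains $K_t$ via explicit crossing diagonals. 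This entirely avoids a wall linkage lemma.

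For Case~B you propose routing $\binom{t}{2}$ long edges into a subdivision of $K_t$. Wall vertices have degree at most~$3$, so for $t\geq 6$ you cannot get $K_t$ as a topological minor with single branch vertices; you would need to grow branch trees, i.e.\ a genuine minor construction, and the iterative linkage argument you yourself flag as the ``main obstacle'' is really the entire difficulty --- the paper's strong-product contraction sidesteps it cleanly and delivers $f(k,t)\in O(kt)$. One thing your wall framing would buy, if the dichotomy were corrected: a clean block is literally a planar induced subgraph because walls are subcubic \emph{subgraphs}, whereas the paper's induced-grid-minor route needs a separate appendix argument (minimality plus Wagner's theorem) to convert an induced grid minor into a planar induced subgraph.
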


Several variations and strengthenings of this result can be found in the literature (see for example \cite{AAKST21,AACHS24,Bonnet24}), but little effort has been made to optimise the function~$f$. 
The current best known bounds for Robertson and Seymour's Flat Wall Theorem~\cite{RS-XIII} were given by Kawarabayashi, Thomas and Wollan~\cite{KTW18}, and imply \Cref{thm:treewidthplanarminor} with~${f(k,t) \in O(t^{26}+t^{24}k)}$ (much better than any bound implicit in the other results mentioned above). 
Our first result is to prove \Cref{thm:treewidthplanarminor} with~${f(k,t) \in O(kt)}$. 
This feeds directly into the primary goal of this paper: 
to better understand the quantitive relationship between Hadwiger number and treewidth in hereditary classes for which such a relationship exists. 
To formalise this, we introduce the following notions. 

We say a hereditary graph class~$\GG$ is \defn{${(\tw,\had)}$-bounded} if there is a function~$f$ (called a \defn{$(\tw,\had)$-bounding function of $\GG$}) such that~${\tw(G) \leq f(\had(G))}$ for every graph~${G \in \GG}$. 
As discussed above, a hereditary class~$\GG$ is ${(\tw,\had)}$-bounded if and only if the planar graphs in~$\GG$ have bounded treewidth. 
For example, planar graphs are not ${(\tw,\had)}$-bounded, whereas chordal graphs are, since~${\tw(G) = \had(G)-1}$ for every chordal graph~$G$. 
Other examples of ${(\tw,\had)}$-bounded classes include all proper vertex-minor-closed classes, as well as various geometrically defined classes, as we discuss later. 

Our next theorem summarises and develops the current state of knowledge regarding general ${(\tw, \had)}$-bounded graph classes and their ${(\tw,\had)}$-bounding functions. 
As mentioned above,
${(\tw, \had)}$-bounded classes are characterised as hereditary classes that do not contain planar graphs of arbitrarily large treewidth. 
In terms of induced subgraphs, this yields
an infinite list of obstructions. 
Here we provide a concise characterisation in the language of induced minors. 
A graph~$H$ is an \defn{induced minor} of a graph~$G$ if a graph
isomorphic to~$H$ can be obtained from~$G$ by deleting vertices and contracting edges. 
We say a graph class~$\GG$ \defn{excludes~$H$ as an induced minor} if no graph in~$\GG$ contains~$H$ as an induced minor. 

\begin{restatable}{thm}{NewMainEquivalent}\label{MainEquivalent1}
    Let~$\GG$ be a hereditary graph class and let~${g_{\GG} \colon \mathbb{N}\to \mathbb{N}}$ be a function such that for every positive integer~$k$, every graph in~$\GG$ with treewidth at least~${g_{\GG}(k)}$ contains a ${(k \times k)}$-grid as a minor. 
    The following statements are equivalent. 
    \begin{enumerate}[{(1)}]
        \item\label{(1)} $\GG$ is ${(\tw,\had)}$-bounded; 
        \item\label{(2)} there is a constant~$c$ such that $f(t)\coloneqq g_{\GG}(ct)$ is a $(\tw,\had)$-bounding function of~$\GG$; 
        \item\label{(3)} 
        there is a $(\tw,\had)$-bounding function $f$ for~$\GG$ with $f(t)\in O(t^9\polylog t)$; and 
        \item\label{(4)} $\GG$ excludes a planar graph as an induced minor. 
    \end{enumerate}
\end{restatable}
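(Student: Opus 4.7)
I plan to establish the cycle of implications $(4) \Rightarrow (2) \Rightarrow (3) \Rightarrow (1) \Rightarrow (4)$; the implications $(2) \Rightarrow (1)$ and $(3) \Rightarrow (1)$ are immediate from the definition of a $(\tw,\had)$-bounding function and need no further comment.

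For $(4) \Rightarrow (2)$, I would fix a planar graph $H$ that $\GG$ excludes as an induced minor. Using the classical fact that planar graphs of sufficient treewidth contain large walls as subgraphs, and that large walls contain every planar graph as an induced minor, there is a constant $k_H$ such that every planar graph of treewidth at least $k_H$ contains $H$ as an induced minor. Hereditariness of $\GG$ then forces every planar graph in $\GG$ to have treewidth strictly less than $k_H$. Given $G \in \GG$, I would then apply \Cref{thm:treewidthplanarminor} with the improved quantitative bound $f(k,t)\in O(kt)$ proved earlier in the paper: a grid minor in $G$ of side at least $f(k_H,\had(G)+1)$ would yield either a $K_{\had(G)+1}$-minor (impossible by the definition of $\had$) or a planar induced subgraph of treewidth at least $k_H$ (impossible by the previous line and hereditariness). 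Hence $\tw(G) < g_{\GG}(f(k_H,\had(G)+1)) \le g_{\GG}(c\cdot \had(G))$ for a constant $c$ depending on $H$, which is $(2)$.

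For $(2) \Rightarrow (3)$, I would take $g_{\GG}(k)\in O(k^9\polylog k)$, as provided by the Chuzhoy--Tan Grid Minor Theorem. The bounding function $f(t)=g_{\GG}(ct)$ from $(2)$ then automatically satisfies $f(t)\in O(t^9\polylog t)$, establishing $(3)$. The implication $(3) \Rightarrow (1)$ is immediate.

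For $(1) \Rightarrow (4)$, I would argue by contrapositive. Assume that for every planar $H$ some $G_H\in \GG$ contains $H$ as an induced minor, and aim to show $\GG$ is not $(\tw,\had)$-bounded. Specialising $H$ to a wall $W_k$ of large side length, the key step is to convert the induced-minor branch-set model of $W_k$ inside $G_{W_k}$ into an induced subdivision of a wall of comparable size sitting inside $G_{W_k}$. Such a subdivision would be a planar induced subgraph in $\GG$ of treewidth $\Omega(k)$ but Hadwiger number at most $4$; letting $k\to\infty$ then contradicts $(\tw,\had)$-boundedness. Extracting this induced subdivision from an arbitrary induced-minor model is the main technical obstacle, since the branch sets can carry dense internal structure; I expect this to be handled by a Flat-Wall-type argument inside the branch-set model, combined with an iterative application of \Cref{thm:treewidthplanarminor} to clean up both the chords within each branch set and the extraneous edges between different branch sets.
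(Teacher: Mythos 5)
Your cycle $(4)\Rightarrow(2)\Rightarrow(3)\Rightarrow(1)\Rightarrow(4)$ is a valid way to organise the proof, and $(2)\Rightarrow(3)$ and $(3)\Rightarrow(1)$ are indeed immediate. The substance lies in the other two implications, and both have issues.

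For $(4)\Rightarrow(2)$ there is a gap in the first step. You claim that planar graphs of treewidth at least $k_H$ contain $H$ as an induced minor, citing (i)~planar graphs of high treewidth contain large walls as \emph{subgraphs} and (ii)~large walls contain every planar graph as an induced minor. But a subgraph followed by an induced minor only yields a minor, not an induced minor, so (i) and (ii) do not compose as you need. The claim itself is true, and the paper's tools give a clean route: \cref{InducedMinorVertexMinor} shows every $n$-vertex planar $H$ is an induced minor of the $(28n\times 28n)$-grid, so by hereditariness $\GG$ excludes that grid as an induced minor, and then \cref{InducedGridMinorOrCompleteMinor} (applied directly, with $k=28n$) bounds the grid-minor side length of any $G\in\GG$ by $O(n\cdot\had(G))$, which combined with the definition of $g_{\GG}$ gives $(2)$ without ever passing through \cref{thm:treewidthplanarminor}. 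So your derivation can be repaired, but as written it does not go through.

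The larger problem is $(1)\Rightarrow(4)$: you identify extracting an induced subdivision of a wall (or, more generally, a planar induced subgraph of large treewidth) from an induced-minor model of a wall or grid as ``the main technical obstacle'' and then leave it open, gesturing at a ``Flat-Wall-type argument'' and ``iterative application of \cref{thm:treewidthplanarminor}.'' This is a genuine missing idea. Flat-Wall machinery is set up to exploit an excluded clique minor, which is not available in your setting, and \cref{thm:treewidthplanarminor} takes a grid \emph{minor} as its hypothesis, whereas you already start from a grid \emph{induced} minor, so applying it iteratively is not a coherent plan for ``cleaning up'' branch sets. What you actually need is exactly the paper's \cref{AppendixTheorem}: if $G$ contains a $(2t\times(4t-1))$-grid as an induced minor then $G$ has a planar induced subgraph of treewidth at least $t$. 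The paper's proof of this is a short direct argument with no Flat-Wall input: from the branch-set model one selects a carefully chosen subset $S$ of branch sets whose contracted pattern has maximum degree~$3$ and still carries a $(t\times t)$-grid minor, takes a vertex-minimal induced-minor model on those sets, and uses Wagner's theorem applied inside each branch set to show the union of the chosen sets induces a planar subgraph. Without this lemma or some replacement for it, your $(1)\Rightarrow(4)$ direction is incomplete.
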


Note that the ${(k \times k)}$-grid is a minor of~$K_{k^2}$, so every ${(\tw,\had)}$-bounding function for a class~$\mathcal{G}$ must be in~${\Omega(\sqrt{g_{\GG}})}$ for some function~$g_{\GG}$ as above. 
\cref{MainEquivalent1} provides a clear strategy for finding ${(\tw,\had)}$-bounding functions: 
determine the smallest function~$g$ such that every graph of treewidth~${g(k)}$ contains a ${(k\times k)}$-grid minor. 
Robertson, Seymour, and Thomas~\cite{RST94} showed the current best known lower bound~${g(k) \in \Omega(k^2 \log k)}$, and speculated that this bound might be tight. 
Demaine, Hajiaghayi, and Kawarabayashi~\cite{DHK-Algo09} conjectured that~${g(k) \in \Theta(k^3)}$ based on their analysis of map graphs.
Since \cref{MainEquivalent1} allows us to restrict to the graphs in the hereditary class~$\GG$ when defining the function~$g_{\GG}$, it is feasible that in some cases our result may lead to ${(\tw,\had)}$-bounding functions in~${o(t^2\log t)}$. 
However, it is worth noting that since~$K_{k^2-1}$ has treewidth~${k^2-2}$ and contains no ${(k \times k)}$-grid minor, we can not obtain a subquadratic $(\tw,\had)$-bounding function from \cref{MainEquivalent1} for any hereditary class with unbounded clique number (for example). However, we believe this is simply a limitation of our methodology.
A hereditary graph class~$\GG$ is \defn{linearly $(\tw,\had)$-bounded} if there is a constant~$c$ such that ${\tw(G)\leq c\,\had(G)}$ for every~${G \in \GG}$.
Recall that~\citet{HIMW24} showed that the class of circle graphs, which has unbounded clique number, is linearly $(\tw,\had)$-bounded. Partly motivated by this, we conjecture the following.

\begin{conj}\label{ConjMainLinear}
    Every ${(\tw,\had)}$-bounded hereditary graph class is linearly ${(\tw,\had)}$-bounded.
\end{conj}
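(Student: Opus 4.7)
By \cref{MainEquivalent1}, it suffices to fix a planar graph $H$ and prove that the hereditary class of graphs with no $H$ induced minor is linearly $(\tw,\had)$-bounded; that is, to produce a constant $c_H$ with $\tw(G)\le c_H \had(G)$ for every $G$ in this class. The plan is to establish an \emph{induced} and quantitatively linear strengthening of the Grid Minor Theorem in this relative setting: exhibit $c_H$ such that whenever $\tw(G)\ge c_H \had(G)$, the graph $G$ contains a planar graph larger than $H$ as an induced minor, contradicting the assumed exclusion. Since every planar graph is an induced minor of a sufficiently large wall, it is enough to produce an induced wall of size depending only on $H$, witnessed by a treewidth bound linear in $\had(G)$.

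The route I would attempt combines two ingredients already visible in the paper. First, I would adapt the induced flat-wall / rooted-string technology used for the circle graph, outer-string, and low-rank-perturbation cases: from a tangle of order linear in $\had(G)$, extract a geometric scaffold compatible with the structure forced by excluding $H$ as an induced minor. Second, I would argue that this scaffold either furnishes $\had(G)+1$ pairwise-touching connected subgraphs --- forcing a $K_{\had(G)+1}$-minor and contradicting the definition of $\had(G)$ --- or collapses to an induced wall of size exceeding $|V(H)|$, which then contains $H$ as an induced minor. A natural intermediate target, extending the paper's partial results, would be to prove the conjecture for every proper vertex-minor-closed class, using the classification of such classes via rank-width to bootstrap from the verified circle-graph case.

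The principal obstacle is the linear dependence in $\had(G)$. The ordinary Grid Minor Theorem cannot deliver this because $K_{k^2-1}$ has treewidth $k^2-2$ and no $(k\times k)$-grid minor, certifying a quadratic barrier; linearity must therefore be recovered from the structural constraint imposed by excluding $H$ as an induced minor, not merely from the treewidth hypothesis. One would need something like: inside such a class, a tangle of order linear in $\had(G)+1$ already supports either a full $K_{\had(G)+1}$-minor or an induced wall of bounded size --- a genuine strengthening of Robertson--Seymour touching-set machinery. Even for the simplest interesting excluded planar induced minor this step looks to require substantial new ideas, and I would expect progress to come first from isolating a small family of $H$ (beyond those already handled) for which the induced scaffold can be described explicitly, and only then attempting the general case.
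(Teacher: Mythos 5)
The statement you were asked to prove is \cref{ConjMainLinear}, which the paper explicitly labels a conjecture and does \emph{not} prove; the paper only verifies it for three special classes (outer-string graphs in \cref{thm:outerstring}, $(g,c)$-outer-string graphs in \cref{thm:outerstringgeneral}, and low-rank perturbations of circle graphs in \cref{thm:c-pertubationofcirclegraph}). So there is no proof in the paper to compare yours against, and your write-up correctly treats the problem as open rather than pretending to settle it.

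That said, what you have produced is a research sketch, not a proof, and the gap is exactly where you yourself locate it. Your argument reduces \cref{ConjMainLinear} to the claim that, inside a hereditary class excluding a planar graph $H$ as an induced minor, a tangle (or wall) of order linear in $\had(G)$ forces either a $K_{\had(G)+1}$-minor or an induced wall of size bounded in terms of $H$ alone. You offer no proof of this dichotomy and acknowledge that it would ``require substantial new ideas''; but this dichotomy \emph{is} the content of \cref{ConjMainLinear} in another guise, so nothing has actually been reduced. The analogy to the paper's outer-string and circle-graph arguments does not carry over: those proofs exploit concrete geometric or order-theoretic representations (rooted strings, chord diagrams, $\times$-free orderings) to extract linkages whose crossing patterns force $K_{t,t}$ or $K_{t+1}$ minors, and no comparable representation is available for an arbitrary class merely defined by excluding $H$ as an induced minor. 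The $K_{k^2-1}$ observation you cite correctly identifies why the Grid Minor Theorem alone cannot give linearity, but you supply no mechanism by which the induced-minor exclusion recovers it. As written, there is no proof here, and the conjecture remains open.
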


In support of \cref{ConjMainLinear}, we show that several natural ${(\tw,\had)}$-bounded graph classes are linearly ${(\tw,\had)}$-bounded. 
First, we consider outer-string graphs. 
An \defn{outer-string graph} is an intersection graph of curves on a disk where each curve has an endpoint on the boundary of the disk; see \Cref{fig:houseouterstring} for an example. 
This class generalises circle graphs. 
\begin{figure}[ht]
    \centering
    \includegraphics[scale=0.8]{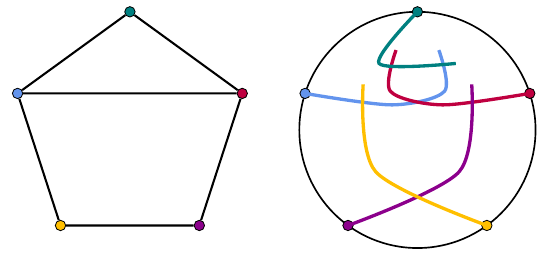}
    \caption{An outer-string graph (left) with a representation (right). }
    \label{fig:houseouterstring}
\end{figure}

\begin{thm}\label{thm:outerstring}
    For every outer-string graph~$G$,
    \[
        \tw(G) \leq 15 \had(G)-2. 
    \]
\end{thm}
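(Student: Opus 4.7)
The plan is to reduce the theorem to a balanced-separator statement and then invoke the theorem of Dvořák and Norin which says that if every induced subgraph of $G$ admits a $\tfrac{1}{2}$-balanced separator of order at most $k$, then $\tw(G) \le 15k - 2$. Since outer-string graphs are closed under induced subgraphs, it suffices to show: every outer-string graph $H$ has a balanced separator of size at most $\had(H)$. This is the same template used for circle graphs in the paper of \citet{HIMW24}, and the coefficient $15$ in the statement of the theorem is a strong hint that this reduction is indeed the route being taken.

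To produce the separator, fix an outer-string representation of $G$ in a closed disk $D$ with each string having an endpoint on $\partial D$. For a simple arc $\gamma \subseteq D$ with both endpoints on $\partial D$, the set $S_\gamma$ of strings meeting $\gamma$ is a separator of $G$: the strings lying in the left open component of $D \setminus \gamma$ are non-adjacent to those lying in the right open component. A continuity/sweeping argument on the pair of endpoints of $\gamma$ along $\partial D$, together with a perturbation to minimise the number of crossings of $\gamma$ with each string, produces an arc $\gamma$ which balances the two sides. The central claim is that for such an arc, $|S_\gamma| \le \had(G)$, which I would prove by explicitly constructing a $K_{|S_\gamma|}$-minor of~$G$.

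The minor is built by taking, for each $s \in S_\gamma$, a branch set consisting of $s$ itself together with short arcs running along $\partial D$ and $\gamma$ to neighbouring attachment points; the outer-string property is essential because every string has an endpoint on $\partial D$, which provides a common boundary along which distinct branch sets may be linked without passing through the interiors of other strings. Listed in cyclic order along $\partial D$, the boundary endpoints of the strings in $S_\gamma$ together with their crossings on $\gamma$ give enough room to realise every required adjacency.

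The main obstacle is that a string of $S_\gamma$ with only one endpoint on $\partial D$ does not naturally behave like a chord of a circle: its interior endpoint can be placed so as to obstruct the naïve linking step that works for circle graphs. I expect this is handled either by perturbing $\gamma$ locally so that such problematic strings become tangential and may be removed from $S_\gamma$, or by a short case analysis splitting $S_\gamma$ according to whether each string has one or two boundary endpoints; the constant-factor slack between the circle-graph constant $12$ of \citet{HIMW24} and the outer-string constant $15$ is exactly the room needed to absorb this case analysis. Once the balanced-separator bound is established for every induced subgraph of $G$, Dvořák--Norin yields $\tw(G) \le 15\,\had(G) - 2$.
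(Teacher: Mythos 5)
Your plan contains a genuine gap at its central step.
The claim that a balanced arc $\gamma$ satisfies $|S_\gamma| \le \had(G)$ is not justified, and your proposed construction of a $K_{|S_\gamma|}$-minor does not produce a minor of $G$.
Two strings that both cross $\gamma$ need not intersect each other: one string can dip just across $\gamma$ near one crossing point and retreat, another can do the same near a different crossing point, and they never meet.
So the strings in $S_\gamma$ need not be pairwise adjacent, and a clique minor requires either adjacency or connecting paths inside the graph~$G$.
The ``short arcs running along $\partial D$ and $\gamma$'' that you use to link branch sets are not part of $G$ at all; branch sets of a minor must be vertex subsets of $G$ inducing connected subgraphs, and you cannot augment a branch set with fragments of the ambient disk.
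So the inequality $|S_\gamma| \le \had(G)$ does not follow from what you describe, and in fact nothing prevents a balanced arc from crossing many pairwise non-adjacent strings.

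The paper sidesteps this problem by never attempting to show that a separator is itself a clique minor.
It first converts the geometry into a combinatorial condition: reading roots along $\partial D$ gives a vertex ordering in which any two crossing edges must have adjacent endpoints (a ``$\times$-free'' ordering, \cref{OuterStringOrdered}), and the theorem is then proved for all $\times$-free ordered graphs (\cref{thm:orderedgraphs}).
Inside that proof, one fixes a set $X$ of $12t$ vertices, splits its span into four consecutive intervals $I_1 < I_2 < I_3 < I_4$, and applies Menger's theorem twice: if either the $(I_1,I_3)$- or the $(I_2,I_4)$-linkage has size below $3t$, one directly obtains a $\frac34$-balanced separation for $X$ of order less than $3t$; otherwise the $\times$-free property, applied to paths in the two linkages, forces a $K_{t,t}$- and hence $K_{t+1}$-minor.
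The treewidth bound $15t - 2$ then comes from the paper's own \cref{lemma:balancedSepsForSets} (with $k = 3t-1$, $c=4$, $q=12t$, giving $q + k - 1 = 15t-2$), not from Dvo\v{r}\'ak--Norin; invoking the latter would only give $\tw(G) \le 15\,\had(G)$ with no $-2$, and would in any case require a bound on $\sep$ that you have not established.
The essential missing idea in your proposal is the dichotomy ``either a small Menger separator exists, or two large crossing linkages exist,'' which replaces the false assertion that the crossing strings form a clique minor.
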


We also consider a generalisation of outer-string graphs to surfaces with higher genus. 
The \defn{Euler genus} of a surface obtained from a sphere by adding~$h$ handles and~$k$ crosscaps is~${2h+k}$. 
We say that a graph is \defn{${(g,c)}$-outer-string} if it is an intersection graph of curves on a surface~$\Sigma$ obtained from a surface of Euler genus~$g$ by deleting the interior of~$c$ disjoint closed discs, and each curve has an endpoint on the boundary of~$\Sigma$. 
Note that ${(0,1)}$-outer-string graphs are precisely outer-string graphs. 

\begin{restatable}{thm}{outerstringgeneral}\label{thm:outerstringgeneral}
    For any integers~${c \geq 1}$ and~${g \geq 0}$, for every ${(g,c)}$-outer-string graph~$G$,
    \[
        \tw(G) \leq 10^3 ( 2 g^{5/2} c^2 + c ) \had(G). 
    \]
\end{restatable}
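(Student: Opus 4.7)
The plan is to reduce to \Cref{thm:outerstring} by iteratively cutting the surface $\Sigma$ along arcs or non-separating loops, shrinking the pair $(g,c)$ until reaching $(0,1)$. Formally I would induct on $g+c$, with base case $(g,c)=(0,1)$ handled directly by \Cref{thm:outerstring}. In the inductive step I select a cut $\gamma \subseteq \Sigma$: if $c \geq 2$ then $\gamma$ is an arc joining two distinct boundary components, while if $c=1$ and $g \geq 1$ then $\gamma$ is a non-separating loop or arc that lowers the genus. Letting $S_\gamma \subseteq V(G)$ be the set of vertices whose strings meet $\gamma$, cutting $\Sigma$ along $\gamma$ yields a $(g',c')$-outer-string representation of $G - S_\gamma$ with $g'+c' < g+c$; the inductive hypothesis then bounds $\tw(G - S_\gamma)$, and combining with $\tw(G) \leq \tw(G - S_\gamma) + |S_\gamma|$ together with $\had(G - S_\gamma) \leq \had(G)$ closes the recursion.

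The crux is to choose $\gamma$ so that $|S_\gamma|$ is bounded by a polynomial in $g$ and $c$ times $\had(G)$: roughly of order $g^{3/2}c^2\,\had(G)$ for a genus-reducing cut, and of order $\had(G)$ for an arc merging two boundary components in the genus-zero stage, so that the $O(g+c)$ cuts aggregate to the claimed $10^3(2g^{5/2}c^2 + c)\had(G)$ bound. To produce such a $\gamma$ I would argue by contradiction: if every admissible cut of the required type meets too many strings, then $\Sigma$ contains a ``thick'' band of pairwise non-homotopic strings passing between two boundary components (when $c \geq 2$) or around a handle (when $g \geq 1$). Because each string is rooted on $\partial\Sigma$, such a band should force a nested collection of strings that realises a $K_t$-minor in $G$ with $t > \had(G)$, contradicting the bound on $\had(G)$. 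This is the standard min-cut/min-flow duality adapted to the topological setting.

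The main obstacle is thus this structural step: extracting a large complete minor from a parallel family of rooted strings on a subsurface. A natural attempt is to take a tubular (annular) neighbourhood of $\gamma$ and recursively apply a disk-style argument within it, analysing the intersection pattern of strings crossing $\gamma$ when ordered along its length. The factor $g^{5/2}$ in the final bound reflects that each of the $O(g)$ handle-removing cuts costs roughly $g^{3/2}c^2\,\had(G)$; the $c^2$ accounts for how handle-cuts proliferate boundary components along the way. To keep the bookkeeping manageable, I would first reduce the genus to $0$ via non-separating loop-cuts and only then peel off the remaining boundary components one at a time, so that the purely $c$-dependent costs are incurred only once the surface has already been simplified.
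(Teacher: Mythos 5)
Your proposal takes a genuinely different route from the paper, but it contains a gap at its crux. You want to find a cut $\gamma$ (a boundary-joining arc or a genus-reducing loop) with $|S_\gamma|$ small, and argue by a min-cut/max-flow style duality that failure to find one forces a thick parallel family of strings and hence a large $K_t$-minor. This last implication does not hold: strings crossing a cut need not cross each other. Consider an annulus ($g=0$, $c=2$) with a large family of pairwise disjoint strings, each rooted on one boundary circle and running parallel to the other. Every arc joining the two boundary circles meets all of them, yet the intersection graph is an independent set, so $\had(G)=1$ and $\tw(G)=0$. There is no small topological cut, and there is no large clique minor; the duality you appeal to is false in this setting. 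The obstruction is precisely that non-adjacent strings give you nothing combinatorially, so ``thick band'' does not translate into ``big complete minor''. Your recursion, as stated, has no way to make progress on such instances even though the theorem is trivially true for them.

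The paper's proof sidesteps this by never cutting the surface. Its core is \cref{lemma:outerstringbalancedsep}: given a target set $X$ of vertices, it finds a cuff $\Delta$ carrying a large share of the roots of $X$, splits $\boundary(\Delta)$ into $s$ intervals, and uses Menger's theorem to get linkages between intervals. Disjointness of strings is then exploited as an \emph{opportunity}, not an obstacle: large induced (pairwise non-adjacent) linkages are turned into crossing-free drawings of $K_s$ or of $K_{3,d}$ in $\Sigma$ (with $s$ and $d$ chosen via \cref{cor:geni} so these cannot embed in Euler genus $g$), and the residual adjacencies between strings yield the $K_{t+1}$-minor when linkages are large but not induced (\cref{cor:inducedlinkage}). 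This gives $\alpha$-balanced separations for sets of fixed size, and \cref{lemma:balancedSepsForSets} converts those into the treewidth bound. In short, the paper works with vertex separators rather than surface cuts, and derives contradictions from embeddability rather than from the presence of a cut-crossing family. If you wanted to push your approach through, you would at minimum need to handle the case where the cut-crossing family is pairwise non-adjacent (e.g.\ by arguing that then there is a different, combinatorial separator), and it is not clear this can be done without essentially reinventing the paper's linkage-plus-embedding argument.
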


Finally, we consider vertex-minor-closed classes in general, although our analysis takes an indirect approach. 
As discussed in \cref{SectionVertexMinor}, every proper vertex-minor-closed class excludes some planar graph as an induced minor, and so is ${(\tw,\had)}$-bounded by \Cref{MainEquivalent1}. 
For an integer~${r \geq 0}$, a graph~$\tilde{G}$ is a \defn{rank-$r$-perturbation} of another graph~$G$ if the binary adjacency matrix of~$\tilde{G}$ is obtained from the sum of a binary adjacency matrix of~$G$ with a ${(\lvert V(G) \rvert \times \lvert V(G) \rvert)}$-matrix of rank~$r$, which we call the \defn{perturbation matrix}, by changing all diagonal entries to~$0$. 
Low-rank perturbations of circle graphs are fundamental to the study of vertex-minors, and form the key ingredient for a structural conjecture of Geelen for proper vertex-minor-closed classes (see \citep[Section~1.5]{McCarty21}). 
As a step towards verifying \cref{ConjMainLinear} for proper vertex-minor-closed classes, we show that low-rank perturbations of circle graphs are linearly ${(\tw,\had)}$-bounded.

\begin{thm}\label{thm:c-pertubationofcirclegraph}
    For every graph~$G$ that is a rank-$r$-perturbation of a circle graph,
    \[ 
        \tw(G) \leq 65 \cdot 2^{3r} \had(G). 
    \]
\end{thm}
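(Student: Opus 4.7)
The plan is to combine the structural consequences of the low-rank hypothesis with the linear $(\tw,\had)$-bound for circle graphs due to \citet{HIMW24}. Because the symmetric perturbation matrix $M$ over $\GF(2)$ has rank $r$, it has at most $2^r$ distinct rows. Partitioning $V(G)$ by the row-pattern yields classes $V_1,\dots,V_k$ with $k\le 2^r$, and for each ordered pair $(i,j)$ there is a constant $m_{ij}\in\{0,1\}$ with $M_{uv}=m_{ij}$ whenever $u\in V_i$ and $v\in V_j$. Consequently, each bipartite subgraph $G[V_i,V_j]$ is either $C[V_i,V_j]$ or its bipartite complement, and each $G[V_i]$ is either $C[V_i]$ or its (set) complement, where $C$ denotes the circle graph being perturbed. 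Thus $G$ arises from $C$ by complementing a partition-respecting family of at most $\binom{k}{2}+k=O(2^{2r})$ bipartite or set blocks.

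Assuming for contradiction that $\tw(G)>65\cdot 2^{3r}\,\had(G)$, the aim is to produce a $K_{\had(G)+1}$-minor in $G$. Starting from a bramble of order greater than $\tw(G)$ in $G$, I would refine it in two pigeonhole steps: first, restrict to bramble elements essentially lying within a single class $V_i$, which costs a factor of at most $k\le 2^r$; and second, inside $V_i$, further refine so that the connectivity of each element is witnessed by edges of $C$ rather than by flipped edges coming from complemented blocks, at the cost of an additional factor $O(k^2)\le O(2^{2r})$ from pigeonholing over pairs of classes. The outcome is a bramble of order $\Omega(\tw(G)/2^{3r})$ in either $C[V_i]$ (if $m_{ii}=0$) or $\overline{C[V_i]}$, to which a \citet{HIMW24}-type bound yields a $K_{\had(G)+1}$-minor, and finally this minor lifts back to a minor of $G$ via the partition structure.

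The principal obstacle will be the case $m_{ii}=1$, where $G[V_i]$ is the complement of a circle graph rather than a circle graph, so that the \citet{HIMW24} bound is not immediately applicable: a clique minor in $\overline{C[V_i]}$ does not correspond to a clique minor in $C[V_i]$. I would address this either by establishing a linear $(\tw,\had)$-bound for complements of induced subgraphs of circle graphs (via a complementary chord-diagram argument), or by arranging the pigeonhole step to locate the clique minor across several classes with $m_{ii}=0$ rather than within a single diagonal block. Absorbing the constants from the \citet{HIMW24} bound and from the two pigeonhole losses accounts for the leading constant $65$, while the multiplicative factor $k^3\le 2^{3r}$ matches the $2^{3r}$ in the statement.
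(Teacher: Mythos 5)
Your first reduction --- partitioning $V(G)$ into at most $2^r$ classes by the row-pattern of the perturbation matrix so that each block of $G$ is either the corresponding block of the circle graph $C$ or its complement --- is essentially \cref{lem:rank-r-pertubation} in the paper (the $k$-colour-perturbation-model with $k\le 2^r$). After that the approaches diverge completely, and the remainder of your plan has two genuine gaps.

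First, the paper does not invoke the \citet{HIMW24} bound for circle graphs as a black box. Instead it proves a dedicated balanced-separator lemma (\cref{lemma:kcirclegraphbalancedsep}) directly on the perturbed chord diagram: it partitions the cyclic order into $4k$ intervals, runs Menger/linkage arguments on the sets $D_j$ of chords meeting each interval, and uses a purely combinatorial statement about edge-colourings of a cyclically ordered $K_{4k}$ (\cref{lemma:goodcolour}) to extract a $K_{t,t}$-subgraph and hence a $K_{t+1}$-minor when no small balanced separator exists; \cref{lemma:balancedSepsForSets} then converts this into the treewidth bound. The delicate point is exactly that flipped and unflipped blocks must be handled simultaneously (a monochromatic linkage may or may not cross depending on whether that colour has a loop in $H$), and the paper's argument is built to absorb this; there is no reduction to the unperturbed case.

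Second, your bramble-pigeonhole step does not work as stated. A bramble of large order in $G$ consists of pairwise-touching connected vertex sets; if you restrict to a single class $V_i$, the elements need not remain connected (their connectivity may pass through other classes), and they need not remain pairwise touching, so you do not obtain a bramble in $G[V_i]$ of order comparable to the original --- a complete multipartite graph already shows that treewidth can collapse entirely when you restrict to one class. Likewise the ``further refine so that connectivity is witnessed by edges of $C$'' step has no clear meaning once you are inside a single class: edges within $V_i$ are either all flipped or all unflipped, so there is no pair-of-classes pigeonhole to perform there. Finally, the $m_{ii}=1$ case you flag is a real obstruction, not a technicality: complements of circle graphs are not circle graphs, so the \citet{HIMW24} bound does not apply, and establishing a linear $(\tw,\had)$-bound for complements of induced circle graphs would itself be a nontrivial new theorem rather than a routine patch. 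As written, the proposal does not give a proof.
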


Our work ties in to the wider study of graph classes that exclude a planar graph as an induced minor. 
Recently, several important algorithmic and structural conjectures have been made for such graphs. 
Let~$\GG$ be an arbitrary ${(\tw,\had)}$-bounded class. 
On the algorithmic side, \citet{DMS24a} and \citet{Gartland23} independently conjectured that many problems that are NP-hard in general 
such as \textsc{Maximum Weight Independent Set}, \textsc{Feedback Vertex Set}, and \textsc{$3$-Colourability} are polynomial-time solvable on graphs in $\GG$. 
On the structural side, \citet{Gartland23} conjectured that every graph in~$\GG$ has a balanced separator dominated by a bounded number of vertices. 
A similar, Coarse Grid Minor Conjecture, due to \citet{GP23}, would imply that graphs in~$\GG$ are quasi-isometric to graphs of bounded treewidth. 
Thus, there is substantial interest in understanding the structure of graphs that exclude a planar graph as an induced minor. 
Recently, a large body of work has emerged attempting to characterise the unavoidable induced subgraphs of graphs with large treewidth that exclude some planar graph as an induced minor \citep{LR22,ACV22,ACHS23,AACHSV24,AACHSV24a,ACDHRSV24,ACHS22,PSTT21,ST21,BBDEGHTW24,AACHS24,ACHS25}.
Our work has direct implications for this project, giving explicit lower bounds on the complete graph minors that these obstructions must contain.

We now present a brief overview of the paper. 
In \cref{sec:prelims} we provide some preliminary definitions and results. 
In \cref{sec:InducedGridMinors} we prove \cref{MainEquivalent1}. 
In \cref{SectionVertexMinor} we discuss the ${(\tw,\had)}$-boundedness of proper vertex minor-closed classes. 
In \cref{sec:orderedgraphs}, we prove \cref{thm:outerstring} by proving a stronger result in the setting of ordered graphs. 
In \cref{sec:outerstring} we prove \cref{thm:outerstringgeneral}. 
Finally, in \cref{sec:circleperturbations} we prove \cref{thm:c-pertubationofcirclegraph}.

\section{Preliminaries}
\label{sec:prelims}

For integers~${m,n \in \mathbb{Z}}$, 
let~${[m,n] \coloneqq \{ z \in \mathbb{Z} \colon m \leq z \leq n\}}$ and~${[n] \coloneqq [1,n]}$. 
Let~$\NN$ be the set of non-negative integers.

All graphs in this paper are finite and simple unless specified otherwise. A \defn{graph class}~$\GG$ is a class of graphs closed under isomorphism. A graph class~$\GG$ is \defn{proper} if some graph is not in~$\GG$. A graph class~$\GG$ is \defn{hereditary} if it is closed under induced subgraphs. 

A graph~$H$ is a \defn{minor} of a graph~$G$ if a graph isomorphic to~$H$ can be obtained from~$G$ by contracting edges and deleting vertices and edges. 
A graph class~$\GG$ is \defn{minor-closed} if for every graph~${G \in \GG}$ every minor of~$G$ is in~$\GG$.

For a vertex~$v$ of a graph $G$, the graph obtained from~$G$ by \defn{locally complementing} at $v$ is the graph \defn{$G*v$} obtained by complementing the subgraph induced by the neighbourhood of~$v$ in $G$. 
A graph~$H$ is a \defn{vertex-minor} of a graph~$G$ if $H$ is isomorphic to a graph that can be obtained from~$G$ by a sequence of local complementations and vertex deletions.

\subsection{Treewidth and Separators}
\label{subsec:balancedSepsForSets}

 A \defn{tree-decomposition} of a graph~$G$ is a pair ${(T,\beta)}$ such that:
 \begin{itemize}
     \item $T$ is a tree and ${\beta \colon V(T) \to 2^{V(G)}}$ is a function, 
     \item for every edge ${vw \in E(G)}$, there exists a node ${x \in V(T)}$ with ${v,w \in \beta(x)}$, and 
     \item for every vertex ${v \in V(G)}$, the set $\{ x \in V(T) \colon v \in \beta(x) \}$ induces a non-empty (connected) subtree of~$T$. 
 \end{itemize}
 The \defn{width} of a tree-decomposition~${(T,\beta)}$ is ${\max\{ \lvert \beta(x)\rvert \colon x \in V(T) \}-1}$. 
 The \defn{treewidth~$\tw(G)$} of a graph~$G$ is the minimum width of a tree-decomposition of~$G$. 
 Treewidth is the standard measure of how similar a graph is to a tree. 
 Indeed, a connected graph has tree-width at most~$1$ if and only if it is a tree. 

\citet{RS-V} first showed there is a function~$g$ such that every graph of treewidth at least~${g(k)}$ contains a ${(k \times k)}$-grid as a minor. 
The function~$g$ has since been steadily improved~\citep{CC16,CT21,LeafSeymour15,RST94,KK20}. 
The best known bounds are due to \citet{CT21}.

\begin{thm}[\citet{CT21}]\label{thm:polygrid}
    There exist integers~$c_1$ and $c_2$ such that for every positive integer~$k$, every graph of treewidth at least~${c_1k^9\log^{c_2}k}$ contains a ${(k \times k)}$-grid as a minor.
\end{thm}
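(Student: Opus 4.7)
The plan is to follow the well-linked-set / path-of-sets-system framework pioneered by Chuzhoy and refined by Chuzhoy--Tan. The first step is the classical reduction from treewidth to well-linkedness: a graph $G$ of treewidth $w$ contains a set $S$ of size $\Omega(w)$ that is $\alpha$-well-linked for some absolute constant $\alpha$, meaning that for every partition of $S$ into two halves of comparable size the max-flow between them in $G$ is $\Omega(|S|)$. This step is standard and costs only a constant factor, so after it we may assume we have a well-linked set of size $\Omega(k^9 \polylog k)$.

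The second, and main, step is to build a \emph{path-of-sets system} of length $\ell$ and width $h$ from such a well-linked set, where $\ell$ and $h$ are each polynomial in $k$. A path-of-sets system is a sequence $(C_1,\dots,C_\ell)$ of disjoint connected subgraphs together with $\ell-1$ families of $h$ vertex-disjoint paths linking consecutive $C_i$'s, where inside each $C_i$ the two boundary sets of size $h$ are again well-linked. The key technical lemma is a recursive construction: given a well-linked instance, one finds a balanced separator, recurses on each side, and routes between the two recursively-built systems via a \emph{splitting} procedure that repeatedly removes bottleneck edges while preserving enough flow to reassemble the desired linkages. Careful amortisation keeps the loss per level polynomial, and the total recursion depth polylogarithmic.

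The third step turns a sufficiently long, wide path-of-sets system into an honest $(k\times k)$-grid minor. The horizontal ``rails'' are already present as the disjoint linkages between clusters; the vertical connections are obtained by routing inside each cluster, using well-linkedness of the boundary to realise any desired matching between the top and bottom endpoints up to a permutation, and then using Hall's theorem and a greedy cleanup to align these permutations across the $\ell$ clusters. This step only loses a polylogarithmic factor in the parameters.

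The main obstacle is unquestionably the second step. The naive approach of pulling off one cluster at a time via a separator gives an exponential blow-up, which is why the original Robertson--Seymour bound was so weak; the whole reason a polynomial bound exists is the balanced-recursion and splitting trick, and pushing the exponent down to $9$ requires the sharp accounting in \citet{CT21}. Without their specific refinements I would expect any independent attempt to land at a much larger exponent (in line with the bound implicit in the Flat Wall Theorem of Kawarabayashi, Thomas, and Wollan cited earlier). The $\polylog$ factor in $f(k)\in O(k^9\polylog k)$ is exactly the overhead of the $O(\log k)$-depth recursion and the $\polylog$ slack absorbed at each splitting step.
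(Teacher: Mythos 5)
This is a cited theorem (attributed to Chuzhoy and Tan~\cite{CT21}); the paper does not prove it, it simply invokes it as a black box, so there is no ``paper's own proof'' to compare against. Your sketch is a fair high-level account of the Chuzhoy--Tan strategy --- reduction to a well-linked set, recursive construction of a path-of-sets system with polynomial loss per level via the splitting argument, and conversion of the path-of-sets system into a grid minor --- and you are right both that the second step is where all the difficulty lies and that reproducing the exponent~$9$ would require the sharp amortisation from their paper. But as written your proposal is a roadmap rather than a proof: it contains no quantitative control on the width and length of the path-of-sets system, no statement of the splitting lemma, and no verification that the recursion terminates with the claimed parameters, so it could not stand in for the citation. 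Since the present paper deliberately treats \cref{thm:polygrid} as an external ingredient, the appropriate move here is exactly what the authors do --- cite~\cite{CT21} --- and there is nothing to reconstruct.
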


Treewidth is closely connected to separators, as we now explain. 
Let~${\alpha \in [\frac{1}{2},1)}$, $G$ be a graph, and~${X \subseteq V(G)}$ be a set of vertices. 
A \defn{separation} of a graph~$G$ is a pair~${(A,B)}$ of subsets of~${V(G)}$ such that~${A \cup B = V(G)}$ and no edge of~$G$ has one end in~${A \setminus B}$ and the other in~${B \setminus A}$. 
The \defn{order} of the separation~${(A,B)}$ is~${|A \cap B|}$. 
For~${X \subseteq V(G)}$, a separation~${(A,B)}$ of~$G$ is \defn{$\alpha$-balanced for~$X$} if ${\big\lvert X\cap  (A\setminus B) \big\rvert\leq \alpha \cdot \lvert X\rvert}$ and ${\big\lvert X\cap  (B\setminus A) \big\rvert\leq \alpha \cdot \lvert X\rvert}$. 
The following simple condition is sufficient for a separation to be $\alpha$-balanced.

\begin{lem}
    \label{obs:balancedseps}
    If~${(A,B)}$ is a separation of a graph~$G$, and~${X \subseteq V(G)}$ with~${|X| \leq c|A\cap X|}$ and~${|X| \leq c|B \cap X|}$ for some~${c>1}$, then~${(A,B)}$ is~$\frac{c-1}{c}$-balanced for~$X$. 
\end{lem}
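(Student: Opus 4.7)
The plan is to reduce the balance condition to a simple counting identity using the fact that $A \cup B = V(G)$. Since every vertex of $G$ lies in $A$ or $B$, the sets $X \cap (A \setminus B)$ and $X \cap B$ partition $X$. Hence
\[
    \lvert X \cap (A \setminus B) \rvert \;=\; \lvert X \rvert - \lvert X \cap B \rvert.
\]
The hypothesis $\lvert X \rvert \leq c \lvert X \cap B\rvert$ gives $\lvert X \cap B \rvert \geq \lvert X \rvert / c$, and substituting yields
\[
    \lvert X \cap (A \setminus B) \rvert \;\leq\; \lvert X \rvert - \tfrac{1}{c}\lvert X \rvert \;=\; \tfrac{c-1}{c}\lvert X \rvert.
\]
By a symmetric argument using $\lvert X \rvert \leq c \lvert A \cap X \rvert$, we also get $\lvert X \cap (B \setminus A) \rvert \leq \frac{c-1}{c}\lvert X \rvert$, which is exactly the condition for $(A,B)$ to be $\frac{c-1}{c}$-balanced for $X$.

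This is essentially a one-line calculation, so there is no real obstacle; the only thing to be careful about is using the definition of separation (in particular $A \cup B = V(G)$) to justify the partition step, and noting that we need $c > 1$ only to ensure $\frac{c-1}{c} \geq 0$, while the hypotheses themselves force $|X \cap A|, |X \cap B| > 0$ whenever $X$ is nonempty.
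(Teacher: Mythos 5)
Your proof is correct and takes essentially the same counting approach as the paper. The paper partitions $X$ into three parts ($X \cap (A\setminus B)$, $X\cap(B\setminus A)$, $X\cap A\cap B$) and rearranges, whereas you use the slightly slicker two-part partition of $X$ into $X\cap(A\setminus B)$ and $X\cap B$, which gives the inequality in one line; the underlying idea is identical.
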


\begin{proof}
    Let~${X_A \coloneqq X \cap (A\setminus B)}$, and~${X_B \coloneqq X\cap (B\setminus A)}$, and~${X^* \coloneqq X \cap A \cap B}$. 
    So~${X_A,X_B,X^*}$ is a partition of~$X$. 
    By assumption, 
    \[
        |X_A|+|X_B|+|X^*|=|X|\leq c|A\cap X| = c( |X_A|+|X^*|).
    \]
    Adding~${(c-1)|X_B|}$ to both sides, 
    \[
        c|X_B| \leq (c-1)( |X_A|+|X_B|+|X^*|)=(c-1)|X|.
    \]
    Hence,~$|X_B| \leq \frac{c-1}{c}|X|$. 
    By symmetry,~$|X_A| \leq \frac{c-1}{c}|X|$. 
    Hence,~$(A,B)$ is~$\frac{c-1}{c}$-balanced for~$X$.
\end{proof}

Note that it can easily be seen that the above condition is in fact equivalent for a separation to be~$\alpha$-balanced.

The next lemma has appeared in various forms across the literature. 
The first explicit variant, for the values~${c=2}$ and~${q=2k+1}$, was given by \citet{Reed97}, but a similar argument already appears in the graph minors series by \citet{RS-X}.
For our purposes, it is convenient to tie the treewidth of a graph to the existence of~$\alpha$-balanced separations for sets of a \textsl{fixed} size. 

\begin{lem}\label{lemma:balancedSepsForSets}
    Let~${k \geq 1}$, ${c \geq 2}$ and~${q \geq ck+1}$ be integers, and let~$\alpha \coloneqq \frac{c-1}{c}$. 
    If~$G$ is a graph such that for every set~$X\subseteq V(G)$ of size exactly~$q$, $G$ has an~$\alpha$-balanced separation for~$X$ of order at most~$k$, then~${\tw(G) \leq q + k - 1}$.
\end{lem}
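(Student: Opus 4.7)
The plan is to prove the following strengthening by induction on $|V(G)|$: for every $W \subseteq V(G)$ with $|W| \leq q$, the graph $G$ admits a tree-decomposition of width at most $q+k-1$ in which some bag contains $W$ (the lemma is then the case $W = \emptyset$). The base case is $|V(G)| \leq q+k$, where the single-bag decomposition with bag $V(G)$ has width at most $q+k-1$ and trivially contains $W$.

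For the inductive step, assume $|V(G)| > q+k$. Since $|V(G)| > q$, extend $W$ to some $X \subseteq V(G)$ with $|X| = q$, and apply the hypothesis to $X$ to obtain an $\alpha$-balanced separation $(A,B)$ of order at most $k$; set $S := A \cap B$. Define the child-side boundaries
\[
W_A := (W \cap A) \cup S, \qquad W_B := (W \cap B) \cup S.
\]
Since $W \subseteq X$, we have $|W_A| \leq |X \cap (A \setminus B)| + |S| \leq \alpha q + k = q - \tfrac{q}{c} + k$, which is at most $q$ because the hypothesis $q \geq ck+1$ gives $k < q/c$; symmetrically $|W_B| \leq q$. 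The same counting also shows $|X \cap (B \setminus A)| \geq q - \alpha q - |S| \geq q/c - k > 0$, so $B \setminus A$ and (by symmetry) $A \setminus B$ are both nonempty, and hence $|A|, |B| < |V(G)|$, which allows the induction to make progress. The hypothesis itself is inherited by induced subgraphs: for any $q$-set $X' \subseteq A$, a balanced separation $(A', B')$ of $G$ for $X'$ restricts to the separation $(A' \cap A, B' \cap A)$ of $G[A]$ for $X'$, whose balance is preserved (since $X' \subseteq A$) and whose order is at most $|A' \cap B'| \leq k$; the same holds for $G[B]$.

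Applying the inductive hypothesis to $(G[A], W_A)$ and $(G[B], W_B)$ yields tree-decompositions $(T_A, \beta_A)$ and $(T_B, \beta_B)$ of width at most $q+k-1$, with distinguished bags $t_A, t_B$ containing $W_A, W_B$ respectively. Glue them by introducing a new root node $r$ with $\beta(r) := W \cup S$ (of size at most $q+k$) joined to $t_A$ and $t_B$. The separation $(A,B)$ ensures no edge of $G$ runs between $A \setminus B$ and $B \setminus A$, so edge coverage is immediate; any vertex appearing in bags of both subtrees lies in $S \subseteq \beta(r)$, and $W \subseteq \beta(r)$ by construction, so the subtree-connectedness axiom also holds; finally $W \subseteq \beta(r)$ as required. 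The main obstacle is the numerical balance: one must pick $W_A, W_B$ large enough to include $S$ (so the gluing works) yet small enough (at most $q$) for the inductive hypothesis to re-apply, and this is exactly where the hypothesis $q \geq ck+1$ is needed — without the strict inequality $\alpha q + k < q$ the recursion would fail to preserve the boundary-size invariant.
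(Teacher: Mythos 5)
Your proof is correct and follows essentially the same argument as the paper's: the same strengthened inductive claim (a tree-decomposition of width $\leq q+k-1$ with a bag containing any given set of at most $q$ vertices), the same extension to a set of size exactly $q$, the same use of the balanced separation to form child boundaries $(W\cap A)\cup S$ and $(W\cap B)\cup S$, the same verification that these have size $< q$ and that the hypothesis is inherited by $G[A]$ and $G[B]$, and the same gluing via a new root bag. The only cosmetic difference is that you set $\beta(r) := W\cup S$ whereas the paper uses the enlarged set $X\cup S$; both have size at most $q+k$ and both satisfy the required axioms.
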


\begin{proof}
    We prove the following stronger claim which implies the assertion.
        
        \begin{claim}
            Let ${k,c,q,\alpha}$ and~$G$ be as in the lemma statement. 
            Then, for any set~${X \subseteq V(G)}$ of at most~$q$ vertices, $G$ has a tree-decomposition~${(T,\beta)}$ of width at most~${q+k-1}$, and there is a vertex~${r \in V(T)}$ such that~${X \subseteq \beta(r)}$. 
        \end{claim}
    
    We proceed by induction on~${\lvert V(G)\rvert}$. 
    If~${\lvert V(G)\rvert \leq q+k}$, then we are done, regardless of the set $X$, by defining $T$ to be the tree with vertex set $\{r\}$ and $\beta(r)\coloneqq V(G)$. 
    This settles the base of our induction.
    
    So assume $\lvert V(G)\rvert > q+k$. By adding additional vertices to $X$ if necessary, we may also assume that $\lvert X \rvert = q$. Thus, by assumption, $G$ has an $\alpha$-balanced separation $(A_1,A_2)$ for $X$ such that $|A_1\cap A_2|\leq k$.
    
    For each $i\in\{1,2\}$ let $X_i \coloneqq (X\cap A_i)\cup (A_1\cap A_2)$. Note that $k<\frac{q}{c}$ since $q\geq ck+1$.
    Moreover, since $(A_1,A_2)$ is an $\alpha$-balanced separation for $X$,
    \begin{align*}
        \lvert X_i\rvert
        &\leq \lvert X\cap (A_i \setminus A_{3-i}) \rvert + \lvert A_1 \cap A_2 \rvert \leq \alpha \cdot \lvert X\rvert + k = \tfrac{c-1}{c} \cdot q +k < q.
    \end{align*}

    It follows that $X\cap (A_i\setminus A_{3-i}) = X \setminus X_{3-i}\neq\emptyset$ for both $i\in\{1,2\}$.
    This implies that $G_i\coloneqq G[A_i]$ has fewer vertices than $G$ for both $i\in\{1,2\}$. We now show that each $G_i$ inherits the property from $G$ that for every set $Y \subseteq V(G_i)$ of size exactly $q$, $G_i$ has an $\alpha$-balanced separation for $Y$ of order at most $k$. Indeed, let $Y\subseteq V(G_i)$ with $|Y|=q$. By assumption, $G$ has $\alpha$-balanced separation $(A,B)$ for $Y$ of order at most $k$. Let $A'\coloneqq A\cap V(G_i)$ and $B'\coloneqq B\cap V(G_i)$. So $(A',B')$ is a separation in $G_i$ of order $|A'\cap B'|\leq|A\cap B|\leq k$. Moreover, since $Y\subseteq V(G_i)$, we have $Y\cap (A'\setminus B')=Y\cap (A\setminus B)$ and $Y\cap (B'\setminus A')=Y\cap (B\setminus A)$. So $|Y\cap (A'\setminus B')|\leq\alpha|Y|$ and $|Y\cap (B'\setminus A')|\leq\alpha|Y|$. Thus $(A',B')$ is $\alpha$-balanced for $Y$, as claimed.
    
    Therefore, for each $i \in \{1,2\}$, by applying the induction hypothesis to $G_i$ together with $X_i$ we obtain a tree-decomposition $(T_i,\beta_i)$ of $G_i$ of width at most $q+k-1$ such that there exists a vertex $r_i\in V(T_i)$ with $X_i\subseteq \beta_i(r_i)$.

    We now construct a tree-decomposition $(T,\beta)$ for $G$. 
    Define the tree $T$ to be the disjoint union of $T_1$ and $T_2$ together with a new vertex $r$ adjacent only to $r_1$ and $r_2$. 
     Set $\beta(r)\coloneqq X\cup (A_1\cap A_2)$, and for each $i\in\{1,2\}$ and each $t\in V(T_i)$, 
     set $\beta(t)\coloneqq \beta_i(t)$.
    Thus, $X\subseteq \beta(r)$ and $\lvert \beta(r)\rvert\leq \lvert X\rvert + \lvert A_1\cap A_2\rvert \leq q+k$.
    Also, since $(A_1, A_2)$ is a separation of $G$, for every edge $vw \in E(G)$, there exists $i \in \{1,2\}$ such that $vw \in E(G_i)$, and so $v,w \in \beta_i(t) = \beta(t)$ for some $t \in V(T_i)$. Moreover, note that any $v \in V(G_1) \cap V(G_2)$ appears in $\beta(r_1)$, $\beta(r_2)$, and $\beta(r)$, and any $v \in \beta(r)$ appears in $\beta(r_i)$ for some $i \in \{1,2\}$, so for each $v \in V(G)$, the set $\{t \in V(T) \colon v \in \beta(t)\}$ induces a subtree of $T$.
    Thus $(T,\beta)$ is the desired tree-decomposition for $G$.
\end{proof}

\subsection{Graphs with bounded Hadwiger number}

\citet{DM82} proved the following lower bound on the independence number~$\alpha(G)$ for a graph~$G$ with given Hadwiger number. 

\begin{thm}[\citep{DM82}]
    \label{thm:duchetmeyniel}
    For every graph~$G$, 
    \[
        2 \cdot \alpha(G) \cdot \had(G) \geq \lvert V(G) \rvert.
    \]
\end{thm}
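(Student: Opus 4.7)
This is the classical Duchet--Meyniel inequality; I would prove it by induction on $|V(G)|$. Write $h = \had(G)$ and $n = |V(G)|$; the goal is to show $\alpha(G) \geq n/(2h)$. The base case $n \leq 2h$ is trivial, since any single vertex gives $\alpha(G) \geq 1 \geq n/(2h)$.

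The first step of the inductive argument is to reduce to connected $G$. If $G$ has components $G_1, \ldots, G_c$, then $\had(G_i) \leq h$ for all $i$ and $\alpha(G) = \sum_i \alpha(G_i)$, so applying the hypothesis to each component and summing yields $\alpha(G) \geq \sum_i |V(G_i)|/(2h) = n/(2h)$.

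For connected $G$, the easy case is when some vertex $v \in V(G)$ has $\deg(v) \leq 2h - 1$. Then $|N[v]| \leq 2h$ and $\had(G - N[v]) \leq h$, so applying induction to $G - N[v]$ yields an independent set of size at least $(n - 2h)/(2h)$; adjoining $v$ yields an independent set in $G$ of size at least $1 + (n - 2h)/(2h) = n/(2h)$.

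The main obstacle is the remaining case, where $\delta(G) \geq 2h$. Here I would use that $G$ is connected with $\had(G) = h$ to partition $V(G)$ into $h$ pairwise-adjacent connected branch sets $B_1, \ldots, B_h$ of a $K_h$-minor, obtained by starting from any $K_h$-minor and iteratively absorbing each unassigned vertex into a branch set containing one of its neighbors (which is possible by connectivity). The plan is then to choose a branch set $B_i$ extremally---for instance, minimizing $|N(B_i)|$, or minimizing $|B_i|$ over all branch-set partitions---and combine an inductive independent set of $G[B_i]$ with an inductive independent set of $G - N[B_i]$ into a single independent set of $G$. The delicate part of the argument is to ensure that the accounting closes with the correct constant $2h$ rather than the weaker $4h^2$ one obtains from a naive combination: the loss comes from the boundary $N(B_i)$, and controlling this via the extremality of the partition (and possibly averaging over the choice of $i$) is the essential difficulty.
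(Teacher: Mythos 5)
The paper cites this result to \citep{DM82} and does not prove it, so there is no in-paper argument to compare against; here is an assessment on the merits. Your reductions are sound: handling components separately, and the case of a vertex~$v$ with $\deg(v)\le 2\had(G)-1$ (delete $N[v]$, recurse, re-insert $v$) both work. The gap, which you yourself flag, is the main case $\delta(G)\ge 2\had(G)$. With $h\coloneqq\had(G)$ and $B_i$ a branch set, the union of independent sets in $G[B_i]$ and in $G-N[B_i]$ is indeed independent in $G$, but since you have no Hadwiger bound better than $h$ on either piece, induction only yields $\alpha(G)\ge \big(|B_i|+n-|N[B_i]|\big)/(2h)=\big(n-|N(B_i)|\big)/(2h)$, short by $|N(B_i)|/(2h)$. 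Nothing in the setup forces $\had(G[B_i])<h$ or $\had(G-N[B_i])<h$; minimising $|N(B_i)|$ cannot make it zero when $G$ is connected with $h\ge 2$; and the high minimum degree pushes the wrong way, tending to make every $|N(B_i)|$ large. Averaging over $i$ only rescales the same deficit, since $\frac1h\sum_i\big(n-|N(B_i)|\big)/(2h)=n/(2h)-\frac{1}{2h^2}\sum_i|N(B_i)|$. So as written, step~4 does not close, and I do not see a fix within this framework.

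The standard Duchet--Meyniel argument avoids branch-set partitions and needs no minimum-degree split. For connected $G$, grow a connected set $D$ together with an independent set $I\subseteq D$: initialise $D=I=\{v_1\}$ for any vertex $v_1$; while $D$ is not dominating, take $u$ at distance exactly $2$ from $D$, let $w$ be a common neighbour of $u$ and $D$, and replace $D$ by $D\cup\{w,u\}$ and $I$ by $I\cup\{u\}$. Since $u\notin N[D]\supseteq N[I]$, the set $I$ stays independent, $D$ stays connected, and $|D|=2|I|-1\le 2\alpha(G)-1$ throughout. When the loop ends, $D$ is a connected dominating set, so contracting $D$ yields a vertex adjacent to all of $G-D$, giving $\had(G-D)\le\had(G)-1$. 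Induction then gives $|V(G-D)|\le 2\alpha(G-D)\,(\had(G)-1)\le 2\alpha(G)(\had(G)-1)$, and hence $n=|D|+|V(G-D)|\le 2\alpha(G)\had(G)-1$. Replacing your step~4 with this closes the proof (and renders your step~3 unnecessary).
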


A \defn{path} is a tree whose vertices all have degree at most~$2$. 
A path in a graph~$G$ with endpoints~${v,w \in V(G)}$ is called a \defn{$(v,w)$-path}. 
For~${A,B \subseteq V(G)}$, an \defn{$(A,B)$-path} is a $(v,w)$-path that is internally disjoint from~${A \cup B}$, for some~${v \in A}$ and~${w \in B}$. 
An \defn{$(A,B)$-linkage} is a set of pairwise vertex-disjoint $(A,B)$-paths. 
Two paths~$P$ and~$Q$ in~$G$ are \defn{adjacent} if there is an edge of~$G$ between some~${v \in V(P)}$ and some~${w \in V(Q)}$. 
A linkage is \defn{induced} if its paths are pairwise non-adjacent. 

\begin{cor}
    \label{cor:inducedlinkage}
    For every graph~$G$ and all integers~${k,t \geq 1}$, if~${\had(G) \leq t}$ then every linkage of size at least~${2 k t}$ in~$G$ contains an induced linkage of size at least~$k$. 
\end{cor}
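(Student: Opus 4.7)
The plan is to deduce this corollary from the Duchet--Meyniel bound (\Cref{thm:duchetmeyniel}) applied to a ``conflict graph'' built from the linkage. Given a linkage~$\mathcal{L}$ of size at least~$2kt$ in~$G$, form an auxiliary graph~$H$ whose vertex set is~$\mathcal{L}$ and in which two paths are adjacent exactly when they are adjacent in~$G$. An independent set in~$H$ is precisely a pairwise non-adjacent (hence induced) sublinkage, so it suffices to produce an independent set of size at least~$k$ in~$H$.

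The key observation is that $H$ is a minor of $G$: each path in~$\mathcal{L}$ is a connected subgraph, the paths are pairwise vertex-disjoint by definition of a linkage, and two such branch sets are joined by an edge of~$G$ exactly when the corresponding vertices of~$H$ are adjacent. Contracting each path in~$\mathcal{L}$ to a single vertex (and deleting the remaining vertices of~$G$) therefore yields~$H$ as a minor. In particular, ${\had(H) \leq \had(G) \leq t}$.

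Applying \Cref{thm:duchetmeyniel} to~$H$,
\[
    2 \cdot \alpha(H) \cdot \had(H) \geq \lvert V(H) \rvert = \lvert \mathcal{L} \rvert \geq 2kt.
\]
Dividing by $2\had(H) \leq 2t$ gives $\alpha(H) \geq k$, and any independent set of~$H$ of size~$k$ provides the required induced sublinkage. I do not foresee a real obstacle here; the only thing to be careful about is checking that contracting paths genuinely realises~$H$ as a minor (as opposed to a spanning subgraph of~$H$), which is immediate from the definition of adjacency in~$H$.
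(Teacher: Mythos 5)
Your proof is correct and matches the paper's argument exactly: both contract each path of the linkage to a single vertex to obtain the auxiliary graph $H$ as a minor of $G$, then apply \Cref{thm:duchetmeyniel} to $H$ to extract an independent set of size $k$, which corresponds to the desired induced sublinkage. The only cosmetic difference is that you spell out the minor-realisation check slightly more explicitly than the paper does.
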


\begin{proof}
    Consider the graph~$H$ obtained from contracting each path in a linkage of size at least~${2 k t }$ in~$G$ to a single vertex and deleting all other vertices. 
    Since~$H$ is a minor of~$G$, ${\had(H) \leq \had(G) \leq t}$ and~${|V(H)| = 2kt}$. 
    By \Cref{thm:duchetmeyniel},~$H$ has an independent set of size at least~$k$, which corresponds to an induced linkage in~$G$ of size at least~$k$. 
\end{proof}

\subsection{Induced minors and vertex-minors of grids}\label{SectionInducedMinorsVertexMinors}

We prove the following theorem, which allows us to find induced minors and vertex-minors of grids. 
For~${m,n \in \NN}$, the \defn{$(m\times n)$-grid} is the graph with vertex set $[m]\times [n]$ where vertices $(v_1,v_2)$ and $(u_1,u_2)$ are adjacent whenever $v_1=u_1$ and $|v_2-u_2|=1$, or $v_2=u_2$ and $|v_1-u_1|=1$.

\begin{thm}\label{InducedMinorVertexMinor}
    Every $n$-vertex planar graph is both an induced minor and a vertex-minor of the $(28n\times 28n)$-grid.
\end{thm}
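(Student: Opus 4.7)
The plan is to prove both containments via a single construction: exhibit a subdivision $G^*$ of $G$ as an induced subgraph of the $(28n\times 28n)$-grid. Given such a $G^*$, both containments follow easily. For the induced-minor direction, delete every grid vertex outside $V(G^*)$; what remains is exactly $G^*$, and contracting each subdivided edge back to a single edge (a legal induced-minor contraction, since each subdivision vertex has degree $2$ in $G^*$) recovers $G$. For the vertex-minor direction, after the same deletion step, each subdivision vertex $w$ has degree exactly $2$ in the remaining graph, with neighbours $u,v$ that are non-adjacent in $G^*$ (they lie at distance $2$ only through $w$). Local complementation at $w$ toggles precisely the single edge $uv$, which is added; then deleting $w$ replaces the length-$2$ path $u,w,v$ by a direct edge $uv$ and leaves every other edge of the current graph unchanged. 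Processing the subdivision vertices one-by-one along each subdivided edge of $G^*$ therefore transforms $G^*$ into $G$ by local complementations and vertex deletions, yielding $G$ as a vertex-minor.

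The central task is therefore the embedding lemma: every planar graph on $n$ vertices admits a subdivision that is an induced subgraph of the $(28n\times 28n)$-grid. I would start from a planar orthogonal-style grid drawing of $G$ on a grid of side $O(n)$, in which each vertex of $G$ is represented by an axis-aligned box (whose size accommodates its degree) and each edge by a rectilinear path routed internally-disjointly between the two boxes of its endpoints. Such box-orthogonal drawings are produced by standard planar drawing techniques, and for planar graphs the total grid side required is a small constant multiple of $n$. Then dilate the drawing by a suitable integer factor so that every vertex-box is surrounded by a layer of empty grid vertices and any two distinct edge-routes are separated by at least one empty row or column. The image of $G^*$ in the dilated grid — each original vertex realised by an induced subtree within its box (a single path along the box perimeter suffices), and each edge by the subdivision given by its routing — is then an induced subgraph of the grid, because the surrounding clearance forbids any edge of the grid between $V(G^*)$ other than the intended ones.

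The main obstacle is the bookkeeping needed to push the constant down to $28$: one must combine (i) a drawing theorem giving a box-orthogonal planar drawing of $G$ on a grid of side at most roughly $4n$, treating vertices of high degree as boxes whose perimeters absorb the incident edges, with (ii) a dilation factor (at most about $7$) that is sufficient to guarantee a clear buffer around every box and edge-route, so that no diagonal or corner-to-corner shortcut appears among grid vertices of $G^*$. The delicate point is verifying that even at bends and at the corners of vertex-boxes no spurious adjacency is created in the grid, and that high-degree vertices can be realised as induced trees inside their boxes without producing internal chords. Once the product of the two constants is kept at most $28$, the reductions described in the first paragraph yield the induced-minor and vertex-minor statements simultaneously.
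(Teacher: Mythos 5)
Your reduction steps (first paragraph) are not quite sufficient for the vertex-minor claim, and your central embedding lemma is left unproved; moreover, your overall route is genuinely different from the paper's, so let me address both points.

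On the route: the paper does not attempt to embed anything as an induced subgraph of the grid. Instead it invokes the Robertson--Seymour--Thomas result that every $n$-vertex planar graph $H$ is a \emph{minor} of the $(14n\times 14n)$-grid, then proves two self-contained lemmas: if $H$ is a minor of $G$, then $H$ is an induced minor of every proper subdivision of $G$, and also a vertex-minor of every proper subdivision of $G$. The constant $28$ then falls out for free because the $1$-subdivision of the $(14n\times14n)$-grid is an induced subgraph of the $(28n\times 28n)$-grid (place original vertices on the odd sublattice). This sidesteps any drawing-theoretic bookkeeping entirely, whereas your plan requires producing a box-orthogonal drawing with explicitly controlled side length and a dilation factor, neither of which you prove; you acknowledge that ``the bookkeeping needed to push the constant down to $28$'' is ``the main obstacle,'' which is to say the heart of the argument is missing, and the claimed bounds ``roughly $4n$'' and ``at most about $7$'' are unjustified.

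There is also a structural hole in your vertex-minor reduction. A grid vertex has degree at most $4$, so a planar graph with a vertex of degree $\ge 5$ cannot have a genuine \emph{subdivision} $G^*$ (in which each original vertex is a single grid vertex) as an induced subgraph of any grid. You correctly anticipate this by representing high-degree vertices as boxes/paths, but then $G^*$ is no longer a subdivision of $G$: it is a minor model in which each $v \in V(G)$ corresponds to an induced subtree $T_v$. Recovering $G$ from this requires contracting each $T_v$ to a single vertex, and your smoothing argument (local complementation at a degree-$2$ vertex followed by deletion) does not accomplish this. When an interior vertex of $T_v$ has an edge-route attached it has degree $\ge 3$, and local complementation there toggles extra pairs (creating spurious edges between different edge-routes). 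Handling contraction via vertex-minor operations requires a more involved argument: this is precisely what the paper's Lemma~\ref{lem:vertexminorminor} does, by an induction that routes edge contractions through the $1$-subdivision and local complementations at the subdivision vertex. Your sketch only covers degree-$2$ smoothing, so the vertex-minor direction is incomplete for any planar graph with a vertex of degree $\ge 3$ represented by a non-trivial box. To repair your route you would need to either (a) prove an analogue of Lemma~\ref{lem:vertexminorminor} (at which point the RST approach becomes the shorter path anyway), or (b) first reduce $G$ to maximum degree $\le 3$ by a vertex-splitting step, which again produces only a graph containing $G$ as a minor, and you are back to needing the same lemma.
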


A \defn{subdivision} of a graph $G$ is any graph $H$ obtained from $G$ by replacing each edge with a possibly longer path. We say that the vertices in $V(H)\setminus V(G)$ are \defn{subdivision vertices}. If all such paths have length greater than $1$, then we say that the subdivision is \defn{proper}, and if all such paths have length exactly two, then we say that $H=G^{(1)}$ is the \defn{1-subdivision} of $G$.

The following two lemmas relate induced minors, vertex-minors, graph minors, and subdivisions.

\begin{lem}\label{inducedminor}
    If a graph $H$ is a minor of a graph $G$, then $H$ is an induced minor of any proper subdivision of $G$.
\end{lem}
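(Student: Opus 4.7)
The plan is to explicitly exhibit branch sets for $H$ as an induced minor of $G'$, where $G'$ is an arbitrary proper subdivision of $G$. Since $H$ is a minor of $G$, fix pairwise disjoint connected sets $B_v \subseteq V(G)$, one for each $v \in V(H)$, together with, for each edge $uv \in E(H)$, a selected edge $e_{uv} = x_u^{uv} x_v^{uv}$ of $G$ with $x_u^{uv} \in B_u$ and $x_v^{uv} \in B_v$. Also fix, for each $v \in V(H)$, a spanning tree $T_v$ of $G[B_v]$. For each edge $e \in E(G)$, let $P_e$ denote the path in $G'$ that replaces $e$; since $G'$ is a proper subdivision, $P_e$ has length at least $2$, hence at least one internal (subdivision) vertex, and distinct $P_e$'s share no internal vertex.

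Next, I would define the candidate induced-minor branch sets $B_v' \subseteq V(G')$. For each edge $uv \in E(H)$, orient it arbitrarily from $u$ to $v$. Then set
\[
B_v' \;\coloneqq\; B_v \;\cup\; \bigcup_{e \in E(T_v)} \operatorname{int}(P_e) \;\cup\; \bigcup_{\substack{uv \in E(H) \\ \text{oriented } v \to u}} \operatorname{int}(P_{e_{vu}}),
\]
where $\operatorname{int}(P_e)$ denotes the set of internal vertices of $P_e$. In words, $B_v'$ contains $B_v$, all subdivision vertices lying on paths replacing spanning-tree edges inside $B_v$, and all subdivision vertices on paths replacing those selected cross-edges $e_{uv}$ whose orientation starts at $v$.

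The verification then proceeds in four short steps. First, the $B_v'$ are pairwise disjoint because the $B_v$ are disjoint in $G$, each internal vertex of a $P_e$ is assigned to at most one branch set by construction, and distinct $P_e$'s share no internal vertex. Second, each $G'[B_v']$ is connected: $T_v$ gives a spanning tree of $G[B_v]$, and replacing each of its edges by the corresponding subdivided path in $G'$ yields a spanning tree of $G'[B_v \cup \bigcup_{e \in E(T_v)}\operatorname{int}(P_e)]$; every additional subdivision-vertex chain from some $P_{e_{vu}}$ attaches to $B_v$ through its original endpoint $x_v^{vu} \in B_v$. Third, for each edge $uv \in E(H)$ oriented $v \to u$, the path $P_{e_{vu}}$ has its internal vertices inside $B_v'$, while $x_u^{vu} \in B_u \subseteq B_u'$ is adjacent in $G'$ to the last internal vertex of $P_{e_{vu}}$, so $B_u'$ and $B_v'$ are adjacent in $G'$.

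The only non-routine step, and the crux of the argument, is showing that for $uv \notin E(H)$ the sets $B_u'$ and $B_v'$ are non-adjacent in $G'$. Every edge of $G'$ either lies inside some $P_e$ or joins an original vertex to an internal vertex of an incident $P_e$, so any hypothetical $B_u'$--$B_v'$ edge would have to be witnessed by a single path $P_e$ with $e \in E(G)$. I would argue by cases on $e$: if $e$ is an edge of some $G[B_w]$, then both endpoints of $P_e$ lie in $B_w'$ and its internal vertices lie either entirely in $B_w'$ (when $e \in E(T_w)$) or in no branch set (otherwise); if $e$ is an edge of $G$ between $B_a$ and $B_b$ with $a \ne b$, then by construction the internal vertices of $P_e$ lie either in $B_a' \cup B_b'$ (when $e = e_{ab}$ and contributes only to the one chosen side) or in no branch set (otherwise); and if $e$ is incident to a vertex outside $\bigcup_w B_w$, its internal vertices lie in no branch set at all. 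In every case, $P_e$ can meet at most two branch sets $B_a', B_b'$, and the pair $\{a,b\}$ is always either $\{u\}$ (so contributes nothing between distinct sets) or an edge of $H$, so no $B_u'$--$B_v'$ adjacency with $uv \notin E(H)$ is created. This gives $H$ as an induced minor of $G'$.
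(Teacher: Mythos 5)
Your proof is correct, and it takes a genuinely different route from the paper's. The paper first reduces to the $1$-subdivision $G^{(1)}$ (observing that any proper subdivision of $G$ contains $G^{(1)}$ as an induced minor), and then argues by induction on the length of a sequence of vertex-deletions, edge-deletions, and edge-contractions turning $G$ into $H$, showing in each case how $G_{k-1}^{(1)}$ arises as an induced minor of $G_k^{(1)}$. You instead fix a branch-set model of $H$ in $G$ (branch sets $B_v$, spanning trees $T_v$, selected cross-edges $e_{uv}$) and explicitly build the induced-minor branch sets in $G'$, assigning each subdivision chain $\operatorname{int}(P_e)$ to exactly one side or to no branch set; the key point that subdivision ``cuts'' unwanted $G$-edges by leaving their internal chains unassigned is spelled out cleanly in your case analysis. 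Your construction is more self-contained and makes the role of subdivision transparent in a single step, at the cost of more bookkeeping; the paper's inductive phrasing is terser and parallels the subsequent proof of the companion vertex-minor lemma, which is likely why the authors chose it.
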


\begin{proof}
   Since any proper subdivision of $G$ contains $G^{(1)}$ as an induced minor, it suffices to show that $G^{(1)}$ contains $H$ as an induced minor. Let $G=G_k,G_{k-1}\dots,G_0$ be a sequence of graphs such that $G_{i-1}$ is obtained from $G_{i}$ by either deleting a vertex, deleting an edge, or contracting an edge, and $G_0$ is isomorphic to $H$. Such a sequence exists since $H$ is a minor of $G$. We prove the claim by induction on $k$. If $k=0$, then $G=H$ and so $H$ can be obtained from $G^{(1)}$ by contracting a minimal set of edges $E'$ such that every subdivision vertex is incident to an edge in $E'$. So $H$ is an induced minor of $G^{(1)}$.

    Now suppose $k\geq 1$. There are three cases to consider. If $G_{k-1}$ is obtained from $G_k$ by deleting a vertex $v$, then $G_{k-1}^{(1)}$ can be obtained from $G_k^{(1)}$ by deleting $v$ and all subdivision vertices incident to $v$. If $G_{k-1}$ is obtained from $G_k$ by deleting an edge $e$, then $G_{k-1}^{(1)}$ can be obtained from $G_k^{(1)}$ by deleting the subdivision vertex corresponding to $e$. Finally, if $G_{k-1}$ is obtained from $G_k$ by contracting an edge $e$, then $G_{k-1}^{(1)}$ can be obtained from $G_k^{(1)}$ by contracting both edges incident to the subdivision vertex corresponding to $e$ then deleting the minimal number of degree $2$ vertices required to remove all $4$-cycles. In each case, the claim then follows by induction.    
\end{proof}

\begin{lem}\label{lem:vertexminorminor}
    If a graph $H$ is a minor of a graph $G$, then $H$ is a vertex-minor of any proper subdivision of $G$.
\end{lem}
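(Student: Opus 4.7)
The plan is to mirror the proof of Lemma~\ref{inducedminor}. First, I would reduce to the case of the 1-subdivision $G^{(1)}$ by observing that $G^{(1)}$ is itself a vertex-minor of any proper subdivision $G'$ of $G$. The key point is that local complementation at a degree-2 vertex toggles exactly the edge between its two neighbors; so, given a subdivided path $u = v_0, v_1, \ldots, v_\ell = w$ in $G'$ with $\ell \geq 2$, iteratively local complementing at $v_1, v_2, \ldots, v_{\ell - 2}$ and deleting each as we go shortens the path to length $2$ without disturbing any other edges of $G'$. Applying this independently to every subdivided edge produces $G^{(1)}$.

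It then suffices to show that $H$ is a vertex-minor of $G^{(1)}$, which I would prove by induction on the length $k$ of a sequence of minor operations $G = G_k, G_{k-1}, \ldots, G_0 \cong H$. The base case $k = 0$ is the argument above applied to $H^{(1)}$: for each edge $uv$ of $H$, local complementing at the subdivision vertex $z_{uv}$ toggles in the edge $uv$, after which $z_{uv}$ can be deleted. For the inductive step, by transitivity of the vertex-minor relation it suffices to show that $G_{k-1}^{(1)}$ is a vertex-minor of $G_k^{(1)}$ whenever $G_{k-1}$ is obtained from $G_k$ by a single minor operation. A vertex deletion of $v$ is mirrored by deleting $v$ together with every subdivision vertex $z_{vw}$ incident to $v$, and an edge deletion of $e$ is mirrored by deleting the single subdivision vertex $z_e$.

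The main obstacle is the edge contraction case, where $G_{k-1}$ is obtained from $G_k$ by contracting an edge $uv$. Here the plan is to apply the sequence of local complementations $*u * z_{uv} * u$ to $G_k^{(1)}$ (the pivot on the edge $u z_{uv}$). Write $A_u \coloneqq \{z_{ux} : ux \in E(G_k),\, x \neq v\}$ and $A_v \coloneqq \{z_{vy} : vy \in E(G_k),\, y \neq u\}$. A direct computation of the three local complementations shows that afterwards each $z_{ux} \in A_u$ has neighbourhood $\{x, v, z_{uv}\}$ --- it loses its edge to $u$ and gains edges to $v$ and $z_{uv}$ --- while each $z_{vy} \in A_v$ and every subdivision vertex for an edge of $G_k$ not incident to $\{u,v\}$ retains its original neighbourhood. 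Now delete $u$ and $z_{uv}$: each $z_{ux} \in A_u$ is left with exactly the two neighbours $\{v, x\}$, so, identifying $v$ with the contracted vertex $u^*$, each $z_{ux}$ and each $z_{vy}$ plays the role of a subdivision vertex of $G_{k-1}^{(1)}$. Finally, for each common neighbour $x$ of $u$ and $v$ in $G_k$, we are left with two parallel subdivision vertices $z_{ux}$ and $z_{vx}$ both joining $v$ to $x$; deleting one of these matches the single subdivision vertex $y_{u^* x}$ in $G_{k-1}^{(1)}$, yielding exactly $G_{k-1}^{(1)}$ as a vertex-minor of $G_k^{(1)}$.
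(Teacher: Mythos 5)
Your proof is correct and follows essentially the same route as the paper's: first reduce to the $1$-subdivision $G^{(1)}$ by repeatedly pivoting away interior subdivision vertices, then induct, handling the contraction $G_k/uv$ by pivoting on the edge $u z_{uv}$ of $G_k^{(1)}$, deleting $u$ and $z_{uv}$, and cleaning up the doubled subdivision vertices over common neighbours of $u$ and $v$ (your pivot $*u*z_{uv}*u$ and the paper's $*u*x_{uv}$ differ only in the edge $z_{uv}v$, which is deleted anyway). The only cosmetic difference is the induction parameter (length of the minor-operation sequence versus $|V(G)|$), which makes you spell out the edge-deletion case that the paper absorbs into its base case.
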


\begin{proof}
    Let $G^*$ be a proper subdivision of $G$.
    First we shall show that $G^*$ contains $G^{(1)}$ as a vertex-minor. We shall argue this inductively on $|V(G^*)|$. Clearly this holds if $|V(G^*)|=|V(G^{(1)})|$.
    If $|V(G^*)| > |V(G^{(1)})|$, then some edge $uv$ of $G$ is replaced with some path $x_0x_1\ldots x_\ell$ in $G^*$ with $x_0=u$, $x_\ell = v$, and $\ell > 2$. Then observe that $G'=(G^* * x_{\ell -1}) \setminus \{x_{\ell -1}\}$ is a proper subdivision of $G$ with $|V(G')|<|V(G^*)|$.
    Therefore $G'$ and thus $G^*$ contains $G^{(1)}$ as a vertex-minor as desired.
    So, we may now assume that $G^*=G^{(1)}$.

    We argue inductively on $|V(G)|$. For each edge $uv$ of $G$, let $x_{uv}$ be the vertex of $G^{(1)}$ adjacent to both $u$ and $v$.
    If $|V(G)|=|V(H)|$, then we can obtain $H$ as a vertex-minor of $G^{(1)}$ by, for every $uv\in E(H)$, locally complementing at $x_{uv}$, and then, for every $uv \in E(G)$, deleting $x_{uv}$. Next, suppose that $|V(G)|>|V(H)|$. If there is a vertex $v \in V(G)$ such that $G \setminus \{v\}$ contains $H$ as a minor, then we may simply apply the inductive hypothesis to $G \setminus \{v\}$, as clearly $(G \setminus \{v\})^{(1)}$ is a vertex-minor of $G^{(1)}$.
    Otherwise, there exists some edge $uv$ of $G$ such that $M=G/uv$ contains $H$ as a minor.
    Observe that $M^{(1)}$ can be obtained from $(G^{(1)} * u * x_{uv})\setminus \{u, x_{uv}\}$ by deleting the minimal number of degree $2$ vertices to remove all $4$-cycles. 
    By the inductive hypothesis, $M^{(1)}$ contains $H$ as a vertex-minor, so $G^{(1)}$ contains $H$ as a vertex-minor, as desired.
\end{proof}

We need the following lemma of \citet{RST94}.

\begin{lem}[{\protect\citep[(1.5)]{RST94}}]
\label{lem:planarminor}
    Every $n$-vertex planar graph is a minor of the $(14n \times 14n)$-grid.
\end{lem}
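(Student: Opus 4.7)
The plan is to reduce the problem to a planar graph of bounded maximum degree, and then embed that graph orthogonally into a small grid.

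For the degree reduction, given a planar embedding of $G$, I would replace each vertex $v$ of degree $d$ with a path $v_1 v_2 \cdots v_d$, attaching the $i$-th neighbour of $v$ (in cyclic order around $v$) to $v_i$. The resulting graph $G^*$ is planar, has maximum degree at most~$3$, and has exactly $\sum_{v \in V(G)} d_G(v) = 2|E(G)| \leq 6n - 12$ vertices (for $n \geq 3$). Contracting each replacement path recovers $G$, so $G$ is a minor of $G^*$. If $G$ has isolated vertices or vertices of degree one, they can be handled trivially.

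Next, I would invoke an orthogonal grid drawing theorem for subcubic planar graphs — for instance, the classical construction of Kant (or Biedl--Kant) which produces, for every $m$-vertex planar graph of maximum degree at most~$3$, a planar orthogonal drawing on a grid of dimensions $O(m) \times O(m)$, in which each vertex occupies a distinct grid point and each edge is realised as a non-crossing path of grid edges. Such a drawing witnesses $G^*$ as a subdivision of a subgraph of the grid, and hence as a minor of the grid. Since $G$ is a minor of $G^*$, this establishes $G$ as a minor of the grid as well. Using $m \leq 6n - 12$, any orthogonal drawing algorithm producing a grid of side length at most roughly $\tfrac{7}{3}m$ would yield the claimed $14n \times 14n$ bound.

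The main obstacle is matching the specific constant~$14$. Several classical orthogonal drawing algorithms do achieve linear bounds with small constants for subcubic planar graphs, but the precise constant depends on the algorithm and on how carefully one handles the "cycle gadgets" introduced in the degree reduction. An alternative, more direct route that might yield a sharper constant is to skip the subcubic reduction and instead triangulate $G$, compute a canonical ordering in the sense of de~Fraysseix--Pach--Pollack or Schnyder, place the resulting vertices on a grid of side $O(n)$ with scaling, and then route each edge as a monotone grid path through the faces, exploiting planarity to avoid crossings. Both approaches give $O(n) \times O(n)$; the difficulty is quantitative tightening to reach~$14$.
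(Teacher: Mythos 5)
The paper does not prove \cref{lem:planarminor} at all: it is cited from Robertson, Seymour, and Thomas \citep[(1.5)]{RST94}, whose explicit construction is what supplies the constant $14$. So your proposal has to be judged as an independent re-derivation. Its qualitative skeleton is sound: expanding each vertex of degree $d$ into a path of $d$ vertices attached according to the cyclic order of edges at that vertex preserves planarity, yields a subcubic planar graph $G^*$ with at most $2|E(G)|\le 6n-12$ vertices that contains $G$ as a minor, and a crossing-free orthogonal drawing of $G^*$ with vertices at grid points and edges realised as internally disjoint paths of grid segments does exhibit a subdivision of $G^*$ as a subgraph of the grid graph, hence $G$ as a minor of the grid.

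The genuine gap is the constant, and the constant is the entire content of the statement: that every planar graph is a minor of \emph{some} grid is folklore, and what this paper actually uses downstream (the $28n$ in \cref{InducedMinorVertexMinor}, the $308k^2$ in \cref{lem:vertminorgrids}) is the explicit factor $14$. You invoke Kant and Biedl--Kant only as an unspecified $O(m)\times O(m)$ bound and then remark that any algorithm achieving side length about $\tfrac{7}{3}m$ would suffice, explicitly conceding that you cannot pin this down; the alternative canonical-ordering route is left equally unquantified. As written, the proposal therefore proves an $O(n)\times O(n)$ statement rather than the $14n\times 14n$ one. It could be completed by quoting a specific orthogonal-drawing theorem with explicit grid dimensions (published bounds for planar graphs of maximum degree at most four are linear in $m$ with a constant comfortably below $\tfrac{7}{3}$, so the arithmetic would go through with room to spare, using $m\le 6n-12$), and by handling the minor bookkeeping of disconnected graphs, small $n$, and grid points versus grid cells; but that route rests on machinery heavier and less self-contained than the direct construction of \citep{RST94}, which is why the paper simply cites it.
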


\begin{proof}[Proof of \cref{InducedMinorVertexMinor}]
    Let $H$ be an $n$-vertex planar graph. By \cref{lem:planarminor}, $H$ is a minor of the $(14n \times 14n)$-grid. By \cref{inducedminor,lem:vertexminorminor}, $H$ is both an induced minor and a vertex-minor of the $1$-subdivision of the $(14n \times 14n)$-grid. Since the $1$-subdivision of the $(14n\times 14n)$-grid is an induced subgraph of the $(28n\times 28n)$-grid, it follows that $H$ is both an induced minor and a vertex-minor of the $(28n\times 28n)$-grid, as required.
\end{proof}

\section{Induced grid minors}
\label{sec:InducedGridMinors}

This section proves that every graph with large treewidth contains a large induced grid minor or a large complete graph minor. As an intermediate result, we prove \Cref{thm:treewidthplanarminor} with~${f(k,t)\in O(kt)}$. 
The result we actually prove (\Cref{InducedGridMinorOrCompleteMinor} below) is stated in terms of induced grid minors. 
The desired bounds for  \Cref{thm:treewidthplanarminor} can be recovered by combining it with the following folklore result, a proof of which can be extracted from the proof of Theorem 1.1 in~\cite{AAKST21}, or found in the appendix of this paper.

\begin{thm}
\label{AppendixTheorem}
    There is a positive integer~$c$ such that for every positive integer~$t$, if~$G$ contains a ${ct \times ct}$-grid as an induced minor, then~$G$ contains an induced planar subgraph of treewidth at least~$t$. 
\end{thm}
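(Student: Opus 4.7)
The plan is to build, for a suitably large constant $c$, an induced subdivision of the $(t\times t)$-grid inside $G$. Since the $(t\times t)$-grid is planar with treewidth $t$, and any subdivision of a planar graph is planar with treewidth at least that of the original graph (treewidth is minor-monotone, and contracting the subdivision paths recovers the grid), such an induced subdivision immediately yields the required induced planar subgraph of treewidth at least $t$.

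I would begin by setting up branch-vertex regions and corridors. Let $B_{a,b}$, $(a,b)\in[ct]^2$, be the branch sets of the given induced $(ct\times ct)$-grid minor in $G$. For each target vertex $(i,j)\in[t]^2$, designate the branch-vertex region $B^*_{i,j}\coloneqq B_{c(i-1)+1,\,c(j-1)+1}$. For each edge of the target grid, say between $(i,j)$ and $(i+1,j)$, designate the corridor consisting of the $c+1$ consecutive branch sets $B_{c(i-1)+k,\,c(j-1)+1}$ for $k=1,\ldots,c+1$; horizontal target edges are handled analogously. The crucial structural observation is that two corridors corresponding to non-incident target edges occupy rows or columns of the $(ct\times ct)$-grid whose indices differ by at least $c$, so if $c\ge 2$ no branch set of one such corridor is adjacent in the $(ct\times ct)$-grid to any branch set of the other. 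The induced-minor property then forbids edges of $G$ between them, so distinct corridors interact only via shared branch-vertex regions $B^*_{i,j}$.

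For each corridor I form an induced corridor-path as a shortest path in the subgraph of $G$ induced by the union of that corridor's branch sets, running between suitable entry vertices in the two endpoint regions. Shortest paths are induced, and the induced-minor property forbids edges between non-adjacent branch sets, so this path visits the corridor's branch sets in the correct linear order. The delicate step is the internal routing inside each branch-vertex region $B^*_{i,j}$: up to four corridor-paths enter at possibly different vertices and must be joined at a common branch vertex $v_{i,j}$ by induced connections. Here I would fix $v_{i,j}\in B^*_{i,j}$, take a BFS tree in $B^*_{i,j}$ rooted at $v_{i,j}$, and iteratively extend each entering corridor-path along shortest paths of this tree toward $v_{i,j}$, pruning repeated vertices to maintain inducedness.

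The hardest part, I expect, will be verifying that after all these choices the final union is an induced subgraph of $G$. Potential extra edges only arise in three places: within a single corridor-path (ruled out by the shortest-path choice), between two corridor-paths sharing some $B^*_{i,j}$ (handled by the BFS-based local routing inside $B^*_{i,j}$ together with the well-separated structure of the corridors outside $B^*_{i,j}$), or between vertices lying in the same branch set of the $(ct\times ct)$-grid but not both on the chosen induced path (again avoided by shortest paths). A technically smoother alternative is to aim for an induced subdivision of a wall of height $t$ rather than of the grid itself: walls are subcubic, so only three corridor-paths meet at each branch vertex, which makes the internal routing strictly easier while still yielding planarity and treewidth $\Theta(t)$. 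Either way, choosing $c$ to be a sufficiently large absolute constant suffices.
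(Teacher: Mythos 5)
Your plan hinges on producing an induced \emph{subdivision} of a grid or wall, and this is where the argument breaks: a graph can contain a large grid as an induced minor while containing \emph{no} induced subdivision of any large wall at all, so the object you are trying to build need not exist. Concretely, take the $(t\times t)$-grid and replace each degree-$3$ branch vertex by a triangle (one triangle vertex per incident corridor). The result contains the grid as an induced minor (contract each triangle), yet every degree-$3$ vertex has a triangle in its closed neighbourhood, and an induced subdivision of a wall or grid has no such triangles near its branch vertices. Your ``BFS-tree routing'' inside $B^*_{i,j}$ cannot repair this: when $G[B^*_{i,j}]$ is itself a triangle whose three vertices are forced attachment points, there simply is no induced subdivided $K_{1,3}$ joining them, and deleting a vertex disconnects a corridor. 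The sentence in your write-up that flags the internal routing as the ``delicate step'' is exactly where a genuinely new idea is needed, not a technical verification.

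The paper's proof abandons the subdivision goal. It first restricts the branch sets to a sparse subcollection giving an induced-minor model of a subcubic graph $H$ (a wall-like graph) that still contains a $(t\times t)$-grid minor, and then takes the branch sets $X_{i,j}$ to be \emph{vertex-minimal} subject to still modelling $H$. Minimality forces the degree-$2$ branch sets to be induced paths, and for a degree-$3$ branch set with attachment vertices $u,v,w$ it applies Wagner's theorem: if $G[X_{i,j}\cup\{u,v,w\}]$ together with an apex adjacent to $u,v,w$ contained $K_5$ or $K_{3,3}$ as a minor, one could shrink $X_{i,j}$, contradicting minimality. Hence each such piece is planar with $u,v,w$ on a common face, and the union $G[\bigcup X_{i,j}]$ is a planar induced subgraph containing $H$ (hence a $(t\times t)$-grid) as a minor. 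The key insight you are missing is precisely this shift from ``find an induced subdivision'' to ``find a minimal induced-minor model and certify its planarity via $K_5/K_{3,3}$-exclusion''; your corridor/hub framework sets up roughly the right skeleton, but without the Wagner step the planarity of the assembled induced subgraph cannot be established.
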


For~${m,n\in \NN}$, let \defn{$P_{m}\boxtimes P_{n}$} denote the graph with vertex set~${[m] \times [n]}$, where vertices $(v_1,v_2)$ and $(u_1,u_2)$ are adjacent whenever~${|v_1-u_1| \leq 1}$ and~${|v_2-u_2| \leq 1}$. 
The \defn{induced grid number~$\igrid(G)$} of a graph~$G$ is the largest integer~$k$ such that~$G$ contains the ${(k\times k)}$-grid as an induced minor.

Our main result in this section is the following. 

\begin{thm}
\label{InducedGridMinorOrCompleteMinor}
    For every~${k,t \in \NN}$, every graph~$G$ that contains the ${(t(2k+1) \times t(2k+1))}$-grid as a minor contains the ${(k \times k)}$-grid as an induced minor or~$K_t$ as a minor.
\end{thm}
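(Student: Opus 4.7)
The plan is to reduce to the case $\had(G) \le t-1$ (since otherwise $K_t$ is already a minor of $G$) and to extract a $(k\times k)$-grid as an induced minor via two applications of \cref{cor:inducedlinkage}. Set $N \coloneqq t(2k+1)$ and fix branch sets $\{B_{i,j} : (i,j) \in [N]^2\}$ of an $(N\times N)$-grid minor in $G$. Within each $B_{i,j}$, I would designate a vertex $v_{i,j}$ together with a spanning tree of $B_{i,j}$ that serves as a hub linking $v_{i,j}$ to the endpoints of grid edges between $B_{i,j}$ and its neighbouring branch sets, so paths can be routed through $v_{i,j}$ in a controlled way.

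For each $i\in[N]$, let $P_i$ be a horizontal path in $G$ that traverses $B_{i,1},B_{i,2},\dots,B_{i,N}$ in order and passes through each $v_{i,j}$; similarly define vertical paths $Q_j$. The paths $\{P_i\}_{i\in[N]}$ form a vertex-disjoint linkage of size $N = t(2k+1) \ge 2k(t-1)$, so \cref{cor:inducedlinkage} (with Hadwiger bound $t-1$) produces an induced sub-linkage $\{P_{i_a} : a\in[k]\}$ with $i_1<\dots<i_k$. An analogous application to the vertical linkage yields $\{Q_{j_b} : b\in[k]\}$ with $j_1<\dots<j_k$. By construction, $P_{i_a}$ and $Q_{j_b}$ both pass through $v_{i_a,j_b}$ for every $(a,b)\in[k]^2$.

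For each $(a,b)\in[k]^2$, define $B'_{a,b}$ as the sub-path of $P_{i_a}$ from $v_{i_a,j_{b-1}}$ (exclusive) to $v_{i_a,j_b}$ (inclusive), together with the sub-path of $Q_{j_b}$ from $v_{i_{a-1},j_b}$ (exclusive) to $v_{i_a,j_b}$ (inclusive), with the convention that when $b=1$ or $a=1$ these sub-paths extend all the way to the respective endpoints of $P_{i_a}$ or $Q_{j_b}$. Each $B'_{a,b}$ is connected (its two parts meet at $v_{i_a,j_b}$), and the $B'_{a,b}$ are pairwise disjoint. Grid-adjacent branch sets $B'_{a,b}, B'_{a,b+1}$ (respectively $B'_{a,b}, B'_{a+1,b}$) are joined by a consecutive edge of $P_{i_a}$ (respectively $Q_{j_b}$), realising the edges of the $(k\times k)$-grid.

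The main obstacle is verifying that $B'_{a,b}$ and $B'_{a',b'}$ are non-adjacent in $G$ whenever $|a-a'|+|b-b'|\ge 2$. The induced horizontal linkage excludes edges between horizontal segments of different $B'$s with $a\ne a'$, and the induced vertical linkage does the analogous job for vertical segments with $b\ne b'$. What remains are $P$-$Q$ cross-adjacencies (an edge from a horizontal segment of one $B'$ to a vertical segment of a different $B'$) and intra-path chords between non-consecutive segments of the same $P_{i_a}$ or $Q_{j_b}$. I would address these by refining the construction: when applying \cref{cor:inducedlinkage} to the vertical paths, I would perform the extraction inside the subgraph obtained from $G$ by deleting every vertex of $\bigcup_{a} V(P_{i_a})$ except the designated crossing vertices $\{v_{i_a,j} : a\in[k],\,j\in[N]\}$, forcing every vertical path to meet the horizontals only at the intended crossings. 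The intra-path chord issues can be avoided by choosing each $P_i$ (and similarly each $Q_j$) to be an induced sub-path of $G$ within its row (respectively column), which the flexibility of the spanning trees inside each branch set makes possible.
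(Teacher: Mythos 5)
Your strategy---induced linkages from \cref{cor:inducedlinkage} applied separately to rows and columns---is genuinely different from the paper's, which reduces to the case $V(G)=[t(2k+1)]^2$ with the grid as a spanning subgraph and then does a dichotomy on ``jump'' edges inside inner $(2k-1)\times(2k-1)$ blocks (either some block has no jumps, giving an induced $(k\times k)$-grid directly, or every block has a jump, and these jumps build crossing paths that yield a $K_t$ minor). However, your argument has a real gap at precisely the step you flag as ``the main obstacle.'' First, you tacitly assume that within each branch set the horizontal and vertical path can be routed to cross at a single designated hub $v_{i,j}$. In a grid \emph{minor}, each branch set need only be a tree, and the path from the left/right attachment points and the path from the top/bottom attachment points in a tree generally share a whole subpath, not one vertex; forcing a single-vertex crossing is exactly the nontrivial step of passing from a grid minor to a wall (a subdivision), which would cost you a constant factor in $N$ and which you do not perform. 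This matters concretely: when you delete $\bigcup_a V(P_{i_a})$ minus the hubs before extracting the vertical linkage, any vertical path that needed a longer shared subpath with some $P_{i_a}$ is now disconnected, so the required $(I,J)$-linkage of vertical paths need not exist in the restricted subgraph.

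Second, even granting single-vertex crossings, restricting the ambient graph when extracting the vertical linkage controls which \emph{vertices} the $Q_j$'s use, not which \emph{edges of $G$} touch them. An edge of $G$ from a non-hub vertex $u\in V(P_{i_a})$ to a vertex $v\in V(Q_{j_b})$ is never removed by your deletion (you delete $u$ only when choosing the $Q$'s, but $u$ comes back when you assemble the $B'_{a,b}$'s from pieces of both families), so a $P$--$Q$ cross-adjacency between non-neighbouring $B'$ cells can survive. Similarly, your fix for intra-path chords---choose each $P_i$ as an induced path within its row---is in tension with requiring $P_i$ to pass through every hub $v_{i,j}$: the shortest/induced route from left to right in a row with a jump edge will typically skip a hub, and \cref{cor:inducedlinkage} gives you no control over which paths survive. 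The paper sidesteps all three problems at once by replacing $G$ with an induced minor whose vertex set \emph{is} the grid; then rows and columns literally cross at a unique vertex, and the only obstruction to induced-ness is a jump edge, which is handled by the explicit two-case analysis. As written, your proposal does not close the non-adjacency verification and so does not constitute a proof.
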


\begin{proof}
    By considering an appropriate induced minor of~$G$, we may assume that ${V(G) = [t(2k+1)] \times [t(2k+1)]}$, where vertices~${(v_1,v_2)}$ and~${(u_1,u_2)}$ are adjacent whenever~${v_1 = u_1}$ and~${|v_2-u_2|=1}$, or~${v_2 = u_2}$ and~${|v_1-u_1| = 1}$. 
    (Note that~$G$ may contain other edges as well.) 
    An edge~${(v_1,v_2)(u_1,u_2) \in E(G)}$ is a \defn{jump} if~${|v_1-u_1| \geq 2}$ or~${|v_2-u_2| \geq 2}$. 
    
    As illustrated in \cref{fig:Gab}, for~${a \in [t]}$ and~${b \in [t]}$, 
    let~$G_{a,b}$ be the subgraph of~$G$ induced by 
    \[
        [(a-1)(2k+1)+1,a(2k+1)] \times [(b-1)(2k+1)+1,b(2k+1)]
    \]
    and 
    let $H_{a,b}$ be the subgraph of $G$ induced by 
    \[
        [(a-1)(2k+1)+2,a(2k+1)-1] \times [(b-1)(2k+1)+2,b(2k+1)-1]
    \]
    Note that~$G_{a,b}$ contains a ${((2k+1) \times (2k+1))}$-grid, and~$H_{a,b}$ contains a ${((2k-1) \times (2k-1))}$-grid strictly inside~$G_{a,b}$. 
    Also note that~$G_{a,b}$ and~$G_{c,d}$ are disjoint for~${(a,b) \neq (c,d)}$. 
    We refer to the bottom-left, bottom-right, top-left and top-right corners of~$G_{a,b}$ as illustrated in \cref{fig:Gab}. 
    
    \begin{figure}[ht]
        \centering
        \includegraphics{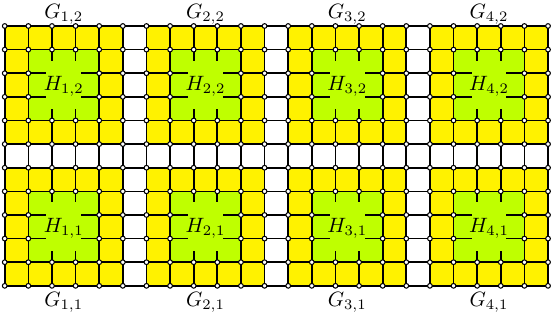}
        \caption{Partition into subgraphs~$G_{a,b}$.}
        \label{fig:Gab}
    \end{figure}
    
    Suppose there exists~${a \in [t]}$ and~${b \in [t]}$ such that~$H_{a,b}$ contains no jump edge. 
    Thus~$H_{a,b}$ is isomorphic to a subgraph of~${P_{2k-1} \boxtimes P_{2k-1}}$. 
    We claim that~$H_{a,b}$ contains the ${(k \times k)}$-grid as an induced minor. 
    For ease of notation, assume without loss of generality that~${a=b=1}$. 
    For each~${i,j \in [k]}$, let 
    \begin{align*}
        X_{i,j} &\coloneqq G[\{(2i,2j-1),(2i,2j),(2i,2j+1)\}\cap V(H_{1,1})] \textrm{\phantom{l} if $i+j$ is even, and}\\
        X_{i,j} &\coloneqq G[\{(2i-1,2j),(2i,2j),(2i+1,2j)\}\cap V(H_{1,1})] \textrm{\phantom{l} if $i+j$ is odd.}
    \end{align*}
    Observe that~$X_{i,j}$ is connected, and that~$X_{i,j}$ and~$X_{i',j'}$ are adjacent if and only if~${i=i'}$ and~${|j-j'|=1}$, or~${j=j'}$ and~${|i-i'|=1}$ (see \cref{fig:NoJump} where each~$X_{i,j}$ is a path). 
    Thus, the induced minor obtained by contracting each~$X_{i,j}$ into a vertex and then deleting the remaining vertices is the ${(k\times k)}$-grid. 
    
    \begin{figure}[ht]
        \centering
        \includegraphics[angle=90]{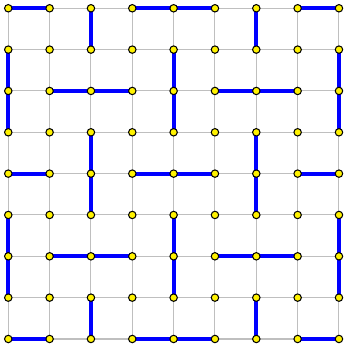}
        \caption{Finding an induced $(k\times k)$-grid when there are no jump edges in $H_{a,b}$. 
        }
        \label{fig:NoJump}
    \end{figure}
    
    Now assume that~$H_{a,b}$ contains a jump edge for each~${a \in [2t]}$ and~${b \in [t]}$. 
    We now work towards showing that~$G$ contains a $K_t$-minor. 
    
    \begin{claim}
        For all~${a \in [t]}$ and~${b \in [t]}$, there exist vertex-disjoint paths~$P_{a,b}$ and~$Q_{a,b}$ in~$G_{a,b}$ such that the endpoints of~$P_{a,b}$ are the bottom-left and top-right corners of~$G_{a,b}$, and the endpoints of~$Q_{a,b}$ are the top-left and bottom-right corners of~$G_{a,b}$. 
    \end{claim}
    
    \begin{proof}
        By assumption, the graph~$H_{a,b}$ contains a jump edge  ${(x_1,y_1)(x_2,y_2) \in E(G)}$. 
        We only discuss the case where~${x_2 \geq x_1+2}$, since all other cases are analogous. 
        Choose~${x \in \NN}$ so that~${x_1 < x < x_2}$. 
        For ease of notation, assume without loss of generality that~${a=b=1}$. 
        As illustrated in \cref{fig:RootedPaths}, define the paths  
        \[
            P_{1,1}\coloneqq( 
            (1,1),
            \dots,
            (1,y_1)
            ,\dots,
            (x_1,y_1),(x_2,y_2),
            \dots,
            (2k+1,y_2),
            \dots,
            (2k+1,2k+1))
        \]
        and 
        \[
            Q_{1,1}\coloneqq(
            (1,2k+1),
            \dots,
            (x,2k+1),
            \dots,
            (x,1),
            \dots,
            (2k+1,1)).
        \]
        By construction, $P_{1,1}$ joins the bottom-left and top-right corners of~$G_{1,1}$, while~$Q_{1,1}$ joins the top-left and bottom-right corners of~$G_{1,1}$. 
        Moreover, $P_{1,1}$ and~$Q_{1,1}$ are disjoint since every vertex in~$Q_{1,1}$ has an x-coordinate of~$x$ or a y-coordinate of~$1$ or~$2k+2$, while no internal vertex of~$P_{1,1}$ has an x-coordinate of~$x$ or a y-coordinate of~$1$ or~$2k+2$. 
    \end{proof}
    
    \begin{figure}[ht]
        \centering
        \includegraphics{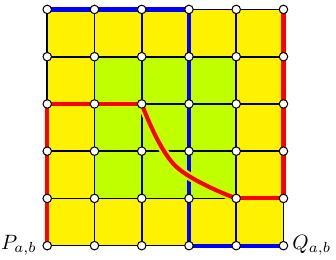}
        \caption{Finding vertex-disjoint rooted paths $P_{a,b}$ and $Q_{a,b}$.}
        \label{fig:RootedPaths}
    \end{figure}
    
    We now show that~$K_t$ is a minor of~$G$. 
    For each~${a \in [t]}$ and~${b \in [t]}$, contract~$P_{a,b}$ to an edge joining the bottom-left corner and top-right corner of~$G_{a,b}$, contract~$Q_{a,b}$ to an edge joining the top-left corner and bottom-right corner of~$G_{a,b}$, then contract each nontrivial path of~${G_{a,b}-E(P_{a,b})-E(Q_{a,b})-V(H_{a,b})}$ down to a single edge. 
    These operations contract~$G_{a,b}$ to a copy of~$K_4$ with vertex set the corners of~$G_{a,b}$. 
    Since~$G_{a,b}$ and~$G_{c,d}$ are disjoint for~${(a,b) \neq (c,d)}$, resulting copies of~$K_4$ are pairwise disjoint. 
    For each~${a \in [t-1]}$ and~${b \in [t]}$, contract the grid edges between~$G_{a,b}$ and~$G_{a+1,b}$. 
    For each~${a \in [t]}$ and~${b \in [t-1]}$, contract the grid edges between~$G_{a,b}$ and~$G_{a,b+1}$. 
    This produces~${P_{t+1} \boxtimes P_{t+1}}$.

    It is folklore that~$K_s$ is a minor of~${P_{2s} \boxtimes P_{2s}}$ for any integer~${s \geq 1}$ (see~\citep{Tarik} for example). 
    We now show that~$K_{2s}$ is a minor of~${P_{2s} \boxtimes P_{2s}}$. 
    Applying this to whichever of~$t$ o~ $t+1$ is even, will show that~$K_t$ is a minor of~$G$, as desired. 
    As illustrated in \cref{GridWithCrossesEven}, for each~${i \in [s]}$ consider the sets 
    \begin{align*}
        Q_i &\coloneqq \{(x,2i-x) \colon x\in[2i-1]\}\cup\{ (x,x-2i+1)  \colon x\in[2i,2s] \} \text{ and}\\
        R_i &\coloneqq \{(x,x+2s+1-2i) \colon x\in[2i-1]\} \cup \{(x,2s+2i-x):x\in[2i,2s]\}.
    \end{align*}
    Observe that each of~${Q_1,\dots,Q_s,R_1,\dots,R_s}$ induces a path, 
    and that ${Q_1,\dots,Q_s,R_1,\dots,R_s}$ are pairwise disjoint and pairwise adjacent. 
    \begin{figure}[!h]
       \centering
        \includegraphics[scale=0.8]{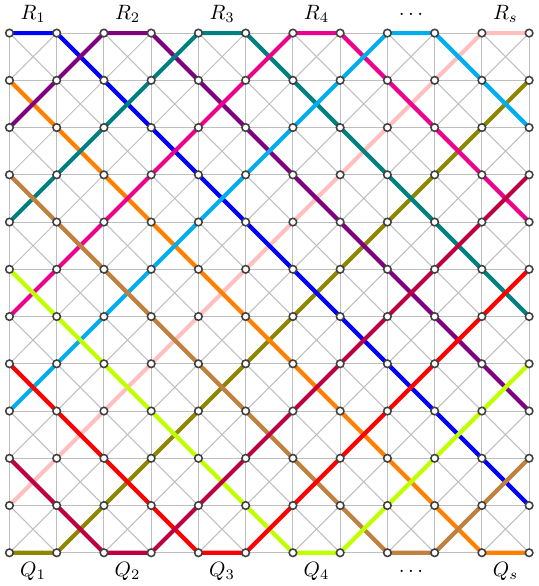}
        \caption{$K_{2s}$ minor in $P_{2s}  \boxtimes P_{2s}$.}
        \label{GridWithCrossesEven}
    \end{figure}
\end{proof}

We now prove the following result from \cref{sec:intro}.

\NewMainEquivalent*
\begin{proof}
    Clearly \ref{(2)} implies \ref{(3)} by \Cref{thm:polygrid}, and~\ref{(3)} implies~\ref{(1)}. 
    To show that \ref{(4)} implies \ref{(2)}, suppose~$\GG$ excludes an $n$-vertex planar graph~$H$ as an induced minor. 
    \cref{InducedMinorVertexMinor} implies that~${\igrid(G) < 28n}$ for every~${G \in \GG}$, and so~\ref{(2)} follows from \cref{InducedGridMinorOrCompleteMinor} and the definition of~$g_{\mathcal G}$. 
    It remains to show that \ref{(1)} implies \ref{(4)}. 
    Suppose~$\GG$ does not exclude a planar graph as an induced minor. 
    So for every~${k \in \NN}$, there is a graph~${G_k' \in \GG}$ such that~$G_k'$ contains the $(k\times k)$-grid as an induced minor. 
    By \cref{AppendixTheorem}, $\GG$ contains planar graphs of arbitrarily large treewidth, which contradicts that $\GG$ is $(\tw,\had)$-bounded (since planar graphs have Hadwiger number at most 4). 
\end{proof}

Note that excluding a planar graph $H$ as a minor is sufficient to exclude $H$ as an induced minor and also implies a bound of $|V(H)|$ on the Hadwiger number.
Thus we can recover the Grid Minor Theorem from \Cref{MainEquivalent1} by observing that the class of $H$-minor-free graph must be a $(\tw,\had)$-bounded class with bounded Hadwiger number, and thus must have bounded treewidth.
However since our proof uses the Grid Minor Theorem, we do not obtain a new proof of it.

We conclude this section with the following open problem.  
\cref{InducedGridMinorOrCompleteMinor} roughly says that every graph~$G$ that contains the ${(6kt \times 2kt)}$-grid as a minor contains the ${(k\times k)}$-grid as an induced minor or a $K_t$-minor. 
It is natural to ask whether the~$kt$ term here is best possible (up to constant factors). 
The ${((k-1) \times (k-1))}$-grid contains no ${(k \times k)}$-grid as an induced minor, and contains no $K_5$-minor. 
So the lemma is best possible for fixed~${t \geq 5}$. 
At the other extreme, ${G \coloneqq P^{t}_{t^2}}$ has treewidth~$t$ and thus contains no $K_{t+2}$-minor. 
Moreover, $G$ is chordal, so~$G$ contains no ${2 \times 2}$-grid as an induced minor. 
Finally, $G$ contains a ${t \times t}$-grid as a spanning subgraph. 
So for fixed~${k \geq 2}$, \cref{InducedGridMinorOrCompleteMinor} is best possible. 
But what if neither~$k$ nor~$t$ is fixed? 
We cannot exclude the possibility that the~$kt$ terms in \cref{InducedGridMinorOrCompleteMinor} can be replaced by ${c(k+t)}$.

\section{Vertex-minors}\label{SectionVertexMinor}

In this section, we show that every proper vertex-minor-closed class is polynomially $(\tw,\had)$-bounded.

\begin{thm}\label{MainVertexMinor}
    Every proper vertex-minor-closed class is polynomially $(\tw,\had)$-bounded with $(\tw,\had)$-bounding function in $O(\had(G)^9 \polylog(\had(G))$.
\end{thm}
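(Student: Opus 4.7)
The plan is to invoke \cref{MainEquivalent1}: since $\GG$ is vertex-minor-closed and hence hereditary (vertex deletion is a vertex-minor operation), it suffices to show that $\GG$ excludes some planar graph as an induced minor; the bounding function in $O(\had(G)^9 \polylog(\had(G)))$ then follows from the implication $(4) \Rightarrow (3)$ of \cref{MainEquivalent1}.

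Fix a graph $H \notin \GG$. The strategy is to produce a planar graph $P$ such that any $G$ containing $P$ as an induced minor must contain $H$ as a vertex-minor; vertex-minor closedness of $\GG$ then precludes $P$ from being an induced minor of any $G \in \GG$. For the construction, first find a planar graph $P_0$ with $H$ as a vertex-minor. If $H$ is planar, take $P_0 \coloneqq H$ directly. If $H$ is non-planar, exploit that local complementation can planarise the excluded structure: for example $K_n * v = K_{1,n-1}$ is planar, so any complete graph is a vertex-minor of a planar graph, and analogous one- or two-step LC reductions work for other small non-planar graphs such as $K_{3,3}$. The general claim ``every graph is a vertex-minor of some planar graph'' is invoked here. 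Then set $P \coloneqq P_0^{(1)}$, which is planar since $P_0$ is; by \cref{lem:vertexminorminor} applied to $P_0$ as a minor of itself, $P_0$ is a vertex-minor of $P$, and so $H$ is a vertex-minor of $P$ by transitivity.

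The main obstacle is the step from ``$G$ contains $P$ as an induced minor'' to ``$G$ contains $H$ as a vertex-minor''. This reduces to an induced strengthening of \cref{lem:vertexminorminor}: if $G$ contains $M^{(1)}$ as an induced minor, then $M$ is a vertex-minor of $G$. The branch sets realising $M^{(1)}$ as an induced minor of $G$ provide, for each edge of $M$, an induced path-like structure in $G$ connecting the branch sets of its endpoints; these structures can be collapsed by pivot operations at the branch sets corresponding to subdivision vertices, mirroring the argument in the proof of \cref{lem:vertexminorminor}, with the ``induced'' condition preventing spurious interactions between successive pivots. Applied to $M = P_0$, this yields $H$ as a vertex-minor of $G$, contradicting $H \notin \GG$. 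Verifying this induced version of \cref{lem:vertexminorminor}, together with the planarisation claim above, constitutes the main technical work; once both are in hand the result is immediate from \cref{MainEquivalent1}.
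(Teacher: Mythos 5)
Your overall strategy matches the paper's: apply \cref{MainEquivalent1} after showing that every proper vertex-minor-closed class excludes some planar graph as an induced minor, which in turn you try to deduce from (a) every graph is a vertex-minor of some planar graph, and (b) an ``induced-minor implies vertex-minor'' transfer lemma for subdivisions. These are exactly the roles played by \cref{thm:vertexminorgrids} and \cref{lem:vertexminormaxdegree} in the paper. However, your version of (b) is much stronger than what the paper establishes, and I do not think it is true as stated.

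You claim: if $G$ contains $M^{(1)}$ as an \emph{induced} minor, then $M$ is a vertex-minor of $G$, for \emph{arbitrary} $M$. The paper's \cref{lem:vertexminormaxdegree} requires both that $M$ have maximum degree~$3$ and that $G$ contain the \emph{3-subdivision} (not the 1-subdivision) as an induced minor, and it only concludes that a proper \emph{subdivision} of $M$ is a vertex-minor (after which \cref{lem:vertexminorminor} finishes). Both restrictions matter. The maximum-degree-3 hypothesis is used because, after minimising the branch set $X_u$ together with its $r = \deg_M(u)$ attachment points, the induced graph $G[X_u \cup \{a_1,\dots,a_r\}]$ has a controlled shape only when $r \le 3$: a path, a subdivided $K_{1,3}$, or the line graph of a subdivided $K_{1,3}$, the last of which is cleaned up by a single local complementation. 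For $r \ge 4$ the minimal connected structures through $r$ pendant attachment points are considerably more varied, and your ``collapse by pivots, with the induced condition preventing spurious interactions'' does not explain how to handle them. The 3-subdivision hypothesis is used to guarantee that each edge-branch-set is a path with an internal vertex $a_{uv}$ whose only neighbours lie on that path, and that the $a_{uv}$ are pairwise at distance at least~$4$; with a 1-subdivision, the edge-branch-set may be a single vertex adjacent to several vertices in each of $X_u$ and $X_v$, and the pivoting argument breaks. So taking $P = P_0^{(1)}$ and invoking your claimed induced strengthening is a genuine gap; the paper instead first replaces the target planar graph by a planar graph of maximum degree~$3$ containing it as a minor, then passes to the 3-subdivision.

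A secondary, smaller issue: ``every graph is a vertex-minor of some planar graph'' is correct (it is the content of \cref{thm:vertexminorgrids}, also due to Raussendorf--Briegel and, combinatorially, Oum), but your justification is only the single example $K_n$. If you intend to cite it as a black box that is fine, but you should still incorporate the max-degree-3 reduction and the 3-subdivision before your transfer lemma, at which point you have essentially reproduced \cref{thm:vertexminorgrids}, \cref{lem:vertexminormaxdegree}, and \cref{lem:vertminorgrids} from the paper.
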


By \cref{MainEquivalent1}, it suffices to show that for every proper vertex-minor-closed class $\GG$, there is a planar graph $H$ such that every graph in $\GG$ excludes $H$ as an induced minor.

We first show that every graph is a vertex-minor of a sufficiently large grid. This was also shown by \citet{RB2001} in the language of quantum computing. The following unpublished proof, originally due to Sang-il Oum, uses a combinatorial approach.  

The \defn{crossing number} of a graph $G$ is the minimum number of crossings in a drawing of $G$ in the plane, where no three edges cross at the same point. See \citep{Schaefer22} for a formal definition and background on crossing number. 

\begin{thm}\label{thm:vertexminorgrids}
    Every $n$-vertex $m$-edge graph $G$ with crossing number $k$ is a vertex-minor of the $(28(n+m+3k)\times 28(n+m+3k))$-grid.
\end{thm}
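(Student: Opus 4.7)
The plan is to reduce to \cref{InducedMinorVertexMinor} by constructing a planar graph~$H$ with at most~$n+m+3k$ vertices such that~$G$ is a vertex-minor of~$H$. Once this is established, \cref{InducedMinorVertexMinor} shows that~$H$ is a vertex-minor of the $(28(n+m+3k)\times 28(n+m+3k))$-grid, and transitivity of the vertex-minor relation then gives the theorem.

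To build~$H$, I would first fix an optimal plane drawing of~$G$ realising exactly~$k$ crossings, and subdivide every edge of~$G$ once at a point disjoint from all crossings to obtain the $1$-subdivision~$G^{(1)}$, which has $n+m$ vertices. The inherited plane drawing of~$G^{(1)}$ still has exactly~$k$ crossings, each now lying on a specific pair of sub-edges coming from distinct edges of~$G$. By \cref{lem:vertexminorminor}, $G$ is a vertex-minor of~$G^{(1)}$, so it suffices to produce a planar graph~$H$ of which~$G^{(1)}$ is a vertex-minor, using at most~$3k$ additional vertices. At each of the~$k$ crossings I would remove the two crossing sub-edges and, inside a small disk around the crossing, introduce three new vertices together with planar edges joining them to the four port vertices (the endpoints of the removed sub-edges) in a carefully chosen gadget, so that a prescribed short sequence of local complementations and vertex deletions at the three gadget vertices restores the two removed sub-edges as edges between the ports while leaving no residual edges. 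Performing this operation at each of the~$k$ crossings yields a planar graph~$H$ on exactly $n+m+3k$ vertices, and sequentially applying the recovery operations at all~$k$ gadgets transforms~$H$ into~$G^{(1)}$ via vertex-minor operations, showing that~$G^{(1)}$, and hence~$G$, is a vertex-minor of~$H$.

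The main obstacle is the design and verification of the three-vertex crossing gadget. A naive choice, such as placing a single vertex at each crossing joined to all four ports, fails because standard local complementations produce a complete graph $K_4$ on the four port vertices rather than only the two desired sub-edges; this can be confirmed via the cutrank invariant preserved by vertex-minor operations, which distinguishes the $K_4$ pattern from the two-edge matching. The two additional gadget vertices are introduced precisely to supply the flexibility required to cancel these spurious edges during decoding, and the technical heart of the argument is to exhibit a specific three-vertex gadget, drawn planarly with the correct cyclic order of ports around the disk, for which the prescribed decoding sequence recovers exactly the two original sub-edges and no more.
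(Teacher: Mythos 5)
Your high-level strategy matches the paper's: build a planar graph~$H$ on at most $n+m+3k$ vertices containing~$G$ as a vertex-minor, apply \cref{InducedMinorVertexMinor}, and conclude by transitivity. However, the argument has a genuine gap at exactly the point you flag as the ``technical heart'': you never exhibit the three-vertex crossing gadget or verify that a sequence of local complementations and deletions recovers the two sub-edges and nothing else. As written, the proof assumes the existence of such a gadget without demonstrating it, so the claim is not established. There are also secondary complications you do not address: in your framework two of the four ports of a crossing are original vertices of~$G$ (possibly of high degree, possibly already adjacent, possibly shared between several crossings), so one must check that the decoding at one gadget does not disturb other gadgets or the rest of the graph.

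The paper avoids the gadget problem with a simpler construction that makes your ``naive'' single-vertex gadget work. It first planarises the drawing by inserting a degree-$4$ crossing vertex $c$ at each crossing (giving $n+k$ vertices, $m+2k$ edges), then takes the $1$-subdivision $D_2$ of this planar graph ($n+m+3k$ vertices), so that the four neighbours $c_1,c_2,c_3,c_4$ of each $c$ are degree-$2$ subdivision vertices, each with only one other neighbour. It then adds the planar $4$-cycle $c_1c_2c_3c_4$ around each crossing vertex to form $D_3$. Locally complementing at $c$ now replaces that $4$-cycle by its complement in $K_4$, which is precisely the matching $c_1c_3, c_2c_4$; deleting $c$ then leaves a proper subdivision of $G$, which contains $G$ as a vertex-minor by \cref{lem:vertexminorminor}. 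Your dismissal of the single-vertex gadget was based on performing the local complementation with no edges among the ports, which indeed yields $K_4$; adding the planar $4$-cycle first is the one-line fix, and it is available in the paper's construction precisely because the ports there are fresh degree-$2$ subdivision vertices adjacent only to $c$ and one external vertex.
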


\begin{proof}
    Let~$D_0$ be a drawing of~$G$ in the plane with~$k$ crossings.
    Let~$D_1$ be the planar graph obtained from~$D_0$ by replacing each crossing point~$c$ by a vertex of degree 4. 
    We call such vertices \defn{crossing vertices}. 
    So~${|V(D_1)|=n+k}$ and~${|E(D_1)|=m+2k}$. 
    Let~$D_2$ be the $1$-subdivision of~$D_1$. 
    Then~$D_2$ is planar and~${|V(D_2)|=n+m+3k}$. 
    For each crossing vertex~$c$ of~$D_2$, let its neighbours in clockwise-wise order be~$c_1,c_2,c_3,c_4$.
    Let~$D_3$ be the planar graph obtained from~$D_2$ by adding, for each crossing vertex~$c$ of~$D_2$,  the edges~${c_1c_2,c_2c_3,c_3c_4,c_4c_1}$. 
    So~${|V(D_3)|=|V(D_2)|=n+m+3k}$. 
    By \cref{InducedMinorVertexMinor}, $D_3$ is a vertex-minor of the ${(28(n+m+3k)\times 28(n+m+3k))}$-grid.

    Let~$G'$ be the graph obtained from~$D_3$ by, for each crossing vertex~$c$ of~$D_3$, locally complementing at~$c$, and then deleting~$c$. 
    Then~$G'$ is a vertex-minor of~$D_3$ and equivalently, $G'$ is obtained from~$D_3$ by, for each crossing vertex~$c$ of~$D_3$, removing~$c$ and the edges~${c_1c_2,c_2c_3,c_3c_4,c_4c_1}$, and then adding the edges~$c_1c_3$ and~$c_2c_4$.
    Equivalently, $G'$ is obtained from~$D_2$ by, for each crossing vertex~$c$, deleting the vertex~$c$ and adding the edges~$c_1c_3$ and~$c_2c_4$.
    So, $G'$ is a proper subdivision of $G$.
    Therefore, by \cref{lem:vertexminorminor}, $G'$ contains~$G$ as a vertex-minor.
    Hence, the $(28(n+m+3k)\times 28(n+m+3k))$-grid contains~$G$ as a vertex-minor.
\end{proof}

Next, we show that large grids are vertex-minors of graphs that contain huge grids as an induced minor.
We need the following useful lemma on vertex-minors.

\begin{lem}\label{lem:vertexminormaxdegree}
    Let $H$ be any graph with maximum degree 3. Let $G$ be any graph that contains the $3$-subdivision of $H$ as an induced minor. Then $G$ contains a proper subdivision of $H$ as a vertex-minor.
\end{lem}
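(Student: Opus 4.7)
The plan is to exhibit a proper subdivision of $H$ as an induced subgraph of some vertex-minor of $G$; since induced subgraphs are themselves vertex-minors (via vertex deletions), this suffices. Let $\{B_w\}_{w\in V(H^{(3)})}$ be branch sets witnessing that $H^{(3)}$ is an induced minor of $G$. Deleting every vertex of $G$ outside $\bigcup_w B_w$ (a safe vertex-minor move) reduces us to the case $V(G)=\bigcup_w B_w$, where every edge of $G$ lies either inside some $B_w$ or between $B_w$ and $B_{w'}$ with $ww'\in E(H^{(3)})$ -- I shall call this the \emph{no-crosstalk} property. For each edge $xy$ of $H^{(3)}$ I fix a connecting edge $e_{xy}$ between $B_x$ and $B_y$, denote its endpoint in $B_w$ by $a_w^{xy}$, and set $A_w\coloneqq\{a_w^{wy}:y\sim w\text{ in }H^{(3)}\}\subseteq B_w$; since $\Delta(H)\leq 3$, we have $|A_w|\leq 3$ for each $w$.

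The main work is to reduce each branch set $B_w$ to a small induced tree $T_w$ containing $A_w$: an isolated vertex when $|A_w|=1$, an induced path with endpoints $A_w$ when $|A_w|=2$, and either an induced path through $A_w$ or an induced $K_{1,3}$ with leaves $A_w$ when $|A_w|=3$. The two ``safe'' operations are (i) deletion of a vertex of $B_w\setminus A_w$ and (ii) local complementation at a vertex $c\in B_w$ whose entire $G$-neighbourhood lies in $B_w$; both modify only internal edges of $B_w$ and therefore preserve the no-crosstalk property. Once every branch set has been so reduced, the resulting graph $G_2$ is the union of the trees $T_w$ glued by the edges $e_{xy}$, with no further edges by no-crosstalk; designating, for each $v\in V(H)$, the central vertex of $T_v$ (the branching vertex of the $K_{1,3}$, or the middle $A_v$-vertex of the path) as the representative $r_v$, a direct check shows that $G_2$ is a proper subdivision of $H$ sitting inside $G_2$ as an induced subgraph -- each edge $uv\in E(H)$ corresponds to a single $r_u$-to-$r_v$ path in $G_2$ of length at least $4$ inherited from the corresponding path in $H^{(3)}$.

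The main obstacle is the reduction claim itself when $|A_w|=3$. An induced Steiner tree on $A_w$ need not exist inside $B_w$: for instance, if $B_w\cong K_4$ then every three-vertex induced subgraph is either a triangle or contains an isolated vertex, never a $K_{1,3}$ or a path. The resolution is to use local complementation to restructure $B_w$: in the $K_4$ example, local complementation at the unique non-$A_w$ vertex $d$ turns the triangle on $A_w$ into an independent set, exhibiting $B_w\ast d$ as an induced $K_{1,3}$ with $d$ as centre. In general, the reduction claim is proved by induction on $|B_w|$: if an interior vertex of degree at most $2$ exists it can be deleted or smoothed; if every interior vertex has degree at least $3$, a local complementation at a carefully chosen interior vertex creates a vertex of smaller degree that can then be removed via (i) or smoothed via (ii). The base cases -- where $B_w$ is very small, or every vertex of $B_w$ is a boundary vertex (as in the $K_3$-with-three-boundaries scenario) -- require a short extra argument in which local complementation at a boundary vertex $c$ whose external neighbourhood is contained in a \emph{single} adjacent branch set $B_u$ is used; the resulting edge toggles then all occur either inside $B_w$ or between the adjacent branch sets $B_w$ and $B_u$, and hence remain legal under the induced-minor structure. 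Executing this case analysis is the technical heart of the argument, and it is where the hypothesis $\Delta(H)\leq 3$ -- which limits each $|A_w|$ to at most $3$ and makes the enumeration finite -- is essential.
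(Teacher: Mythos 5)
Your overall strategy---reduce each branch set to a small induced tree by safe vertex-minor moves, then glue---parallels the paper's, but the reduction step, which you yourself identify as ``the technical heart of the argument,'' is not actually proved, and the operations you declare ``safe'' are insufficient. The key issue is that your operation (ii) requires the vertex's \emph{entire} $G$-neighbourhood to lie in $B_w$, but nothing guarantees that vertices of $B_w\setminus A_w$ have no neighbours in adjacent branch sets: the branch-set decomposition permits many edges between $B_w$ and a neighbouring $B_u$, of which $e_{wu}$ is only one chosen representative. So a degree-$2$ interior vertex of the Steiner tree you are trying to isolate may have external edges, in which case it can neither be smoothed via (ii) nor deleted via (i) without disconnecting the tree. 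Your remedy---locally complementing at a boundary vertex $c$ whose external neighbourhood lies in a single $B_u$---actively makes things worse: it toggles edges \emph{inside} $B_u$ and creates new edges between other vertices of $B_w$ and $B_u$, so the attachment structure of both branch sets changes. It is not clear the process terminates, nor that you end up with a single connecting edge per edge of $H^{(3)}$ as your final ``gluing'' step tacitly assumes.

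The paper's proof avoids all of this by exploiting the $3$-subdivision more carefully before any local complementation happens. It first deletes vertices so that each subdivision segment $G[Y_{uv}^1 \cup Y_{uv}^2 \cup Y_{uv}^3]$ is an induced path whose \emph{internal vertices have no external neighbours}, then chooses one such internal vertex $a_{uv}$ per edge of $H$, noting that these are pairwise at distance at least $4$. Deleting the $a_{uv}$ disconnects the graph so that each component $X_u$ around a vertex $u$ of $H$ is bordered only by degree-$1$ vertices $a_{uv_i}$. Within this isolated piece, vertex deletion alone reduces $G[X_u\cup\{a_{uv_1},\dots\}]$ to a path, a subdivided $K_{1,3}$, or the line graph of a subdivided $K_{1,3}$ (a three-in-a-tree--style folklore fact), and the single troublesome line-graph case is fixed by one local complementation at a triangle vertex $t\notin\{a_{uv_i}\}$, whose existence is guaranteed by the pairwise-distance-$\geq 4$ condition; crucially, $t$ has no neighbours outside $X_u$, so the move is genuinely local. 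Your proof never establishes such isolation, which is exactly what's needed to make the induction you sketch go through. To repair the argument you would need to first replicate this preprocessing of the subdivision segments and only then attempt the branch-set reduction---at which point you would essentially be reproducing the paper's proof.
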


\begin{proof}
    By possibly deleting vertices, we may assume that there is a partition ${\bigcup_{v\in V(H)} Y_v \cup \bigcup_{uv\in E(H)} (Y_{uv}^1\cup Y_{uv}^2 \cup Y_{uv}^3)}$ of~$V(G)$ such that each of these sets of vertices induces a connected subgraph and the graph obtained from~$G$ by contracting the edges of the induced subgraph of each of these sets is the $3$-subdivision of $H$, with vertices naturally associated.
    By possibly deleting more vertices, we may assume that for each~${uv \in E(H)}$, ${G[Y_{uv}^1 \cup Y_{uv}^2 \cup Y_{uv}^3]}$ is a path with at least three vertices whose internal vertices have no neighbour in~${V(G)\setminus (Y_{uv}^1 \cup Y_{uv}^2 \cup Y_{uv}^3)}$.
    For each~${uv \in E(H)}$, let~$a_{uv}$ be an internal vertex of the path~${G[Y_{uv}^1 \cup Y_{uv}^2 \cup Y_{uv}^3]}$.
    Notice that the vertices of $\{a_{uv} \colon uv\in E(H)\}$ are pairwise at distance at least~$4$ in~$G$.
    Then for each vertex~$u$ of~$H$ with neighbours~${v_1, \ldots, v_r}$ in~$H$ (where~${r \leq 3}$ since~$H$ has maximum degree~$3$), there is a set~$X_u$ of vertices with~${Y_u \subseteq X_u}$ such that~$G[X_u]$ is a connected component of~${G \setminus \{a_{uv_1}, \ldots , a_{uv_r}\}}$ and~${G[X_u \cup \{a_{uv_1}, \ldots , a_{uv_r}\}]}$ is connected and each vertex ${a_{uv_1}, \ldots , a_{uv_r}}$ has degree~$1$ in~${G[X_u \cup \{a_{uv_1}, \ldots , a_{uv_r}\}]}$. 
    By possibly deleting more vertices, we may further assume that for each vertex $u$ of $H$, $G[X_u \cup \{a_{uv_1}, \ldots , a_{uv_r}\}]$ is either a path or a subdivided~$K_{1,3}$ with $a_{uv_1}, \ldots , a_{uv_r}$ being leaves, or the line graph of a subdivided $K_{1,3}$ with leaves $a_{uv_1}, a_{uv_2}, a_{uv_3}$. 
    Next, we shall reduce the line graph case to the non-line graph case. 
    
    Suppose that $G[X_u \cup \{a_{uv_1}, a_{uv_2}, a_{uv_3}\}]$ is the line graph of a subdivided $K_{1,3}$ with leaves $a_{uv_1}, \ldots , a_{uv_3}$, since $a_{uv_1}, a_{uv_2}, a_{uv_3}$ are pairwise at distance at least 4 in $G$, there is a vertex $t$ of the triangle in $G[X_u \cup \{a_{uv_1}, a_{uv_2}, a_{uv_3}\}]$ such that $t\not\in \{a_{uv_1}, a_{uv_2}, a_{uv_3}\}$, and $t$ has a neighbour $x$ not contained in the triangle, with $x\not\in \{a_{uv_1}, a_{uv_2}, a_{uv_3}\}$.
    Then $(G[X_u \cup \{a_{uv_1}, a_{uv_2} , a_{uv_3}\}]*t)\setminus \{t\}$ is isomorphic to the graph obtained from $G[X_u \cup \{a_{uv_1}, a_{uv_2} , a_{uv_3}\}]$ by deleting the edge of the triangle not incident to $t$, and then contracting $tx$.
    So $(G[X_u \cup \{a_{uv_1}, a_{uv_2} , a_{uv_3}\}]*t)\setminus \{t\}$ is a subdivided $K_{1,3}$ tree with leaves $a_{uv_1}, a_{uv_2}, a_{uv_3}$.
    Since the local complementation did not alter any edges of~$G$ outside $G[X_u \cup \{a_{uv_1}, a_{uv_2} , a_{uv_3}\}]$, by doing this for each $u\in V(H)$, it now follows that~$G$ contains a proper subdivision of~$H$ as a vertex-minor.
\end{proof}

\begin{lem}\label{lem:vertminorgrids}
    Every graph $G$ with~${\igrid(G) \geq 308 k^2}$ contains the ${(k \times k)}$-grid as a vertex-minor.
\end{lem}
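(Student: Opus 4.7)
The plan is to build a bridge from ``large induced grid minor'' in $G$ to ``$(k\times k)$-grid vertex-minor'' of $G$ by combining \cref{lem:vertexminormaxdegree,lem:vertexminorminor,InducedMinorVertexMinor}. The obstacle preventing us from feeding the $(k\times k)$-grid directly into \cref{lem:vertexminormaxdegree} is that the grid has maximum degree $4$, so the first task will be to replace it by a subcubic graph.

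First I would construct a subcubic graph $H$ that contains the $(k\times k)$-grid as a (classical) minor, by splitting every internal (degree-$4$) vertex $v$ of the grid into two adjacent vertices $v_1,v_2$, where $v_1$ inherits the two horizontal incidences of $v$ and $v_2$ inherits the two vertical incidences. Contracting each edge $v_1v_2$ recovers the $(k\times k)$-grid, so the grid is a minor of $H$. A direct count gives $\lvert V(H)\rvert\leq k^2+(k-2)^2\leq 2k^2$, and since $H$ is subcubic, $\lvert E(H)\rvert\leq 3k^2$.

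Next, let $H''$ be the $3$-subdivision of $H$. Then $H''$ is planar with
\[
    \lvert V(H'')\rvert \leq \lvert V(H)\rvert + 3\lvert E(H)\rvert \leq 2k^2 + 9k^2 = 11k^2.
\]
By \cref{InducedMinorVertexMinor}, $H''$ is an induced minor of the $(28\cdot 11k^2)\times(28\cdot 11k^2)=(308k^2\times 308k^2)$-grid. Since $\igrid(G)\geq 308k^2$, $G$ contains that grid as an induced minor; and the induced minor relation is transitive (combining branch sets preserves the ``no extra edges between branch sets'' property). Hence $G$ contains $H''$, that is, the $3$-subdivision of $H$, as an induced minor.

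Now I would apply \cref{lem:vertexminormaxdegree} to the subcubic graph $H$ to obtain a proper subdivision $H^*$ of $H$ as a vertex-minor of $G$. Finally, since the $(k\times k)$-grid is a minor of $H$, \cref{lem:vertexminorminor} gives the $(k\times k)$-grid as a vertex-minor of $H^*$; the vertex-minor relation is transitive, so the $(k\times k)$-grid is a vertex-minor of $G$, as desired. The whole argument is really an exercise in composing the tools already at hand, and the constant $308=28\cdot 11$ is exactly the product of the ``$\times 11$'' blow-up in Step~2 and the ``$\times 28$'' factor from \cref{InducedMinorVertexMinor}; the only content beyond bookkeeping is the degree-reduction gadget of Step~1.
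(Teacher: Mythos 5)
Your proposal is correct and follows essentially the same route as the paper's own proof: replace the degree-$4$ grid vertices with pairs of adjacent degree-$3$ vertices to obtain a subcubic planar graph $H$ on at most $2k^2$ vertices, take the $3$-subdivision (at most $11k^2$ vertices), invoke \cref{InducedMinorVertexMinor} to place it inside the $(308k^2\times 308k^2)$-grid as an induced minor, and then chain \cref{lem:vertexminormaxdegree} with \cref{lem:vertexminorminor}. The only difference is that you spell out the degree-reduction gadget and the vertex/edge counting a bit more explicitly than the paper does.
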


\begin{proof}
    By replacing degree-$4$ vertices in the $(k\times k)$-grid with two adjacent degree-$3$ vertices, observe that there is a planar graph~$H$ with maximum degree~$3$ on at most~$2k^2$ vertices that contains the ${(k \times k)}$-grid as a minor.
    The $3$-subdivision of~$H$ has at most~$11k^2$ vertices.
    By \Cref{InducedMinorVertexMinor}, the $(308k^2 \times 308k^2)$-grid contains the $3$-subdivision of~$H$ as an induced minor.
    Therefore, $G$ contains the $3$-subdivision of~$H$ as an induced minor.
    Hence by \cref{lem:vertexminormaxdegree}, $G$ contains a proper subdivision of~$H$ as a vertex-minor.
    Since~$H$ contains the ${(k \times k)}$-grid as a minor, it now follows from \cref{lem:vertexminorminor} that~$G$ contains the ${(k \times k)}$-grid as a vertex-minor.
\end{proof}

From \cref{thm:vertexminorgrids} and \cref{lem:vertminorgrids}, we obtain the following.

\begin{thm}\label{VertexMinorGrid}
    For every proper vertex-minor-closed class $\GG$, there exists $k\in \NN$ such that $\igrid(G)\leq k$ for all $G\in \GG$.
\end{thm}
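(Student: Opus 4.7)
The plan is to combine the two lemmas preceding the statement in an entirely straightforward way, using that vertex-minors form a transitive relation (local complementation and deletion operations compose).

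First I would exploit that $\GG$ is proper to fix a witness graph: pick some $H \notin \GG$, and set $n \coloneqq |V(H)|$, $m \coloneqq |E(H)|$, and let $k_0$ be the crossing number of $H$. Then by \cref{thm:vertexminorgrids}, $H$ is a vertex-minor of the $(N \times N)$-grid, where $N \coloneqq 28(n+m+3k_0)$. This $N$ is a fixed constant depending only on $\GG$ (via the choice of $H$).

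Next I would run a contrapositive argument. Suppose some $G \in \GG$ had $\igrid(G) \geq 308 N^2$. Then \cref{lem:vertminorgrids} (applied with $k = N$) produces the $(N \times N)$-grid as a vertex-minor of $G$. Since $H$ is itself a vertex-minor of that grid and the vertex-minor relation is transitive, $H$ would be a vertex-minor of $G$. But $\GG$ is vertex-minor-closed, so this forces $H \in \GG$, contradicting the choice of $H$.

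Consequently $\igrid(G) < 308 N^2$ for every $G \in \GG$, and we may take $k \coloneqq 308 N^2 - 1$. There is no real obstacle here; the work has already been done in \cref{thm:vertexminorgrids} and \cref{lem:vertminorgrids}. The only minor point worth stating explicitly (to keep the argument self-contained) is transitivity of the vertex-minor relation, which is immediate from the definition.
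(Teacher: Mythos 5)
Your proposal is correct and is exactly the argument the paper has in mind: the paper simply states that \cref{VertexMinorGrid} follows ``from \cref{thm:vertexminorgrids} and \cref{lem:vertminorgrids},'' and your write-up spells out that one-line combination (with the transitivity of the vertex-minor relation made explicit and a concrete bound $k = 308N^2 - 1$). Nothing is missing and no step fails.
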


Together with \cref{MainEquivalent1}, \cref{VertexMinorGrid} implies \cref{MainVertexMinor}.

\section{Ordered graphs and outer-string graphs}\label{sec:orderedgraphs}

In this section, we study ordered graphs that are linearly $(\tw,\had)$-bounded. An \defn{ordered graph ${(G,\leq)}$} is a graph $G$ equipped with a total ordering $\leq$ on its vertex set. Ordered graphs are useful in converting geometric and topological problems into combinatorial problems. To date, most of the work on ordered graphs addresses extremal questions~\citep{GMNPTV23,KT23,Tardos19}. Here, we take a more structural approach.

Let~${(G,\leq)}$ be an ordered graph. 
Let~${uv,xy \in E(G)}$ be a pair of edges with distinct endpoints where~${u < v}$ and~${x < y}$. 
We say that~$uv$ and~$xy$ \defn{cross} if~${u < x < v < y}$ or~${x < u < y < v}$. 
For two sets~${X,Y \subseteq V(G)}$, we write~${X < Y}$ if~${x < y}$ for every~${x \in X}$ and every~${y \in Y}$. An ordered graph $(G,\leq)$ is \defn{$\times$-free} if for every pair~${uv, xy}$ of crossing edges, we have~${\{ux,uy,vx,vy\} \cap E(G) \neq \emptyset}$. A graph $G$ \defn{admits a $\times$-free ordering} if there is a total ordering $\leq$ of $G$ such that $(G,\leq)$ is $\times$-free.

Our main result in this section shows that the class of graphs admitting a $\times$-free ordering is linearly $(\tw,\had)$-bounded.

\begin{thm}\label{thm:orderedgraphs}
    For every $\times$-free ordered graph $(G,\leq)$,
    \[ 
        \tw(G) \leq 15 \had(G) - 2. 
    \]
\end{thm}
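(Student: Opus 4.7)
The plan is to apply \cref{lemma:balancedSepsForSets} with $c = 2$, $q \coloneqq 10t$, and $k \coloneqq 5t - 1$, where $t \coloneqq \had(G)$. Since $q \geq 2k+1 = 10t - 1$, this application reduces proving $\tw(G) \leq q + k - 1 = 15t - 2$ to the following claim: for every $X \subseteq V(G)$ with $|X| = 10t$, $G$ admits a $\tfrac{1}{2}$-balanced separation of order at most $5t - 1$ for $X$.

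Given such $X = \{x_1 < x_2 < \cdots < x_{10t}\}$, the first candidate is a cut-based separation. Fix a cut position $m$ strictly between $x_{5t}$ and $x_{5t+1}$, let $L_m, R_m$ be the vertices of $G$ strictly below and above $m$, and set $E_m \coloneqq \{uv \in E(G) \colon u < m < v\}$. For any vertex cover $S$ of $E_m$, the pair $(L_m \cup S,\, R_m \cup S)$ is a $\tfrac{1}{2}$-balanced separation of order $|S|$ for $X$. Since $E_m$ is bipartite between the left and right endpoints of crossing edges, K\H{o}nig's theorem identifies its minimum vertex cover size with its maximum matching size, so the task reduces to either ruling out a matching of size $5t$ in $E_m$ or constructing a separation of the required order by other means.

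Suppose $E_m$ contains a matching $\{e_1, \ldots, e_{5t}\}$ with $e_i = u_iv_i$, $u_i < m < v_i$, and $u_1 < u_2 < \cdots < u_{5t}$. The key structural observation is that pairwise-crossing edges yield clique minors: if $e_{i_1}, \ldots, e_{i_s}$ are pairwise crossing (equivalently, $v_{i_1} < v_{i_2} < \cdots < v_{i_s}$), then contracting each $e_{i_r}$ to a single vertex exhibits $K_s$ as a minor of $G$. Indeed, for each such crossing pair $e_i, e_j$, the $\times$-free hypothesis forces at least one of $u_iu_j, u_iv_j, v_iu_j, v_iv_j$ to lie in $E(G)$, which then becomes an edge between the two contracted vertices. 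Applying the Erd\H{o}s--Szekeres theorem to the permutation that orders the $v_i$'s relative to the $u_i$'s, we obtain either (a)~an increasing subsequence of length $t+1$---yielding a $K_{t+1}$ minor and contradicting $\had(G) = t$---or (b)~a long decreasing subsequence, corresponding to pairwise nested crossing edges.

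The nested case (b) is the principal obstacle, since nested edges alone do not yield clique minors: a pure nested matching of $5t$ edges is a $\times$-free ordered graph in which every cut-based separation balancing $X$ has vertex cover exactly $5t$, so the cut-based recipe cannot on its own achieve order $5t-1$. Here the argument must abandon the single-cut form of the separation and exploit the nesting hierarchy: given pairwise nested crossing edges $e_{j_1} \supset e_{j_2} \supset \cdots$, one partitions them into outer and inner halves, assigns the interior of $V(G)$ (the vertices between the outermost unused $u$ and $v$ of the inner half) to one side of the separation and the remainder to the other, and uses a small boundary set as $A \cap B$. Reconciling the crossing-case clique-minor extraction with this nested-case structural reorganization---while keeping the order uniformly below $5t$ and maintaining $\tfrac{1}{2}$-balance for $X$---is the combinatorial heart of the argument, and the step where the constants $10t$ and $5t-1$ are tuned to fit \cref{lemma:balancedSepsForSets}.
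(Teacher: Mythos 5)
Your proposal is not a complete proof: it identifies the key difficulty (pairwise nested edges across the cut) and then stops, sketching only a vague idea for how one ``might'' resolve it. That gap is genuine, and it is exactly where your chosen setup breaks down. A matching of pairwise nested edges across a single cut is a perfectly valid $\times$-free ordered graph, has no useful clique minor coming from the matching, and defeats any separation of the form $(L_m\cup S, R_m\cup S)$ for a single position $m$ -- so the cut-based framework you start from cannot be repaired by just choosing a cleverer vertex cover. Worse, the Erd\H{o}s--Szekeres step does not deliver what you claim: from a matching of $5t$ edges with no increasing (pairwise-crossing) subfamily of size $t+1$, Erd\H{o}s--Szekeres only guarantees a decreasing (nested) subfamily of length roughly $5$, a constant independent of $t$, not a ``long'' one. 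So even the dichotomy you set up does not hand you a structured nested family to work with.

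The paper avoids this entirely by not reducing to a single cut. It applies \cref{lemma:balancedSepsForSets} with $c=4$, $q=12t$, $k=3t-1$, and proves the corresponding separator statement (\cref{lemma:orderedbalancedsep}) using a \emph{four}-interval decomposition $I_1<I_2<I_3<I_4$, each containing $3t$ vertices of $X$. If either the $(I_1,I_3)$-connectivity or the $(I_2,I_4)$-connectivity is below $3t$, Menger gives the balanced separator directly. Otherwise one has $3t$ disjoint $(I_1,I_3)$-paths and $3t$ disjoint $(I_2,I_4)$-paths; the $\times$-free condition forces that at most $t$ of the $(I_1,I_3)$-paths have their second vertex in $I_2$ and at most $t$ in $I_4$, leaving $t$ that are single edges from $I_1$ to $I_3$, and symmetrically $t$ single edges from $I_2$ to $I_4$. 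Every $I_1$--$I_3$ edge crosses every $I_2$--$I_4$ edge, so by $\times$-freeness all $2t$ edges are pairwise adjacent across the two families, yielding a $K_{t,t}$ minor and hence a $K_{t+1}$ minor -- regardless of whether either family is internally nested. This is the step that makes the nested case a non-issue, and it is precisely what is missing from your single-cut argument. If you want to salvage your approach, you should replace the single cut with (at least) two interleaved cuts so that nested edges across one cut are forced to cross edges across the other; at that point you are essentially rediscovering the paper's proof.
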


We now work towards proving \cref{thm:orderedgraphs}. 
The next lemma is the heart of the proof. 
For this, we recall the classical result of Menger which states that for any graph~$G$ and any two sets~${S, T \subseteq V(G)}$, the maximum cardinality of an $(S,T)$-linkage is equal to the minimum order of a separation~$(A,B)$ of~$G$ with~${S \subseteq A}$ and~${T \subseteq B}$.
 
\begin{lem}
    \label{lemma:orderedbalancedsep}
For every $\times$-free ordered graph~${(G,\leq)}$ with no $K_{t+1}$-minor, for every set~$X$ of exactly~${12t}$ vertices in $G$, $G$ has a ${\frac{3}{4}}$-balanced separation for $X$ of order less than~${3t}$.
\end{lem}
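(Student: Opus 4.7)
The plan is to invoke Menger's theorem on a carefully chosen pair of subsets of $X$. Let $x_1 < x_2 < \cdots < x_{12t}$ be the elements of $X$ in order, and set $L \coloneqq \{x_1, \ldots, x_{3t}\}$ and $R \coloneqq \{x_{9t+1}, \ldots, x_{12t}\}$. Either $G$ admits a separation $(A, B)$ with $L \subseteq A$, $R \subseteq B$, and $|A \cap B| < 3t$ -- in which case the containments force $|X \cap (A \setminus B)| \leq |X| - |R| = 9t$, and symmetrically for the other side, giving the desired $\frac{3}{4}$-balanced separation of order less than $3t$ -- or else by Menger's theorem $G$ contains an $(L, R)$-linkage $\mathcal{P} = \{P_1, \ldots, P_{3t}\}$. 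In the second case, the remainder of the argument must derive a contradiction with $\had(G) \leq t$ starting from $\mathcal{P}$.

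The key structural tool supplied by the $\times$-free hypothesis would be the following claim: \emph{if two vertex-disjoint paths $P, Q$ in $G$ have endpoint intervals that interleave in the ordering, then some vertex of $P$ is adjacent in $G$ to some vertex of $Q$.} I would prove this by drawing each path as a sequence of semicircular arcs in the upper half-plane attached along the order axis; interleaving endpoints together with a Jordan-curve / parity argument (and a small perturbation near interior path vertices on the axis) force the two curves to intersect transversally, and any such intersection corresponds to two edges whose intervals interleave in the ordering, at which point the $\times$-free condition directly supplies an adjacency between their endpoints.

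Sort $\mathcal{P}$ by left endpoint so that the order of right endpoints defines a permutation $\pi$ of $[3t]$. Pairwise-interleaving subfamilies of $\mathcal{P}$ correspond exactly to increasing subsequences of $\pi$, and by the structural tool any such subfamily of size $t+1$ contracts to a $K_{t+1}$-minor of $G$. The main obstacle is producing such an increasing subsequence from $\pi$, since Erd\H{o}s--Szekeres alone only guarantees length $\sqrt{3t}$, which is far short of $t+1$. Handling the decreasing subsequences of $\pi$ (corresponding to nested pairs of paths) will be the technical heart of the argument: I expect the proof to exploit the interaction between the $\times$-free property and the hypothesis $\had(G) \leq t$ in the nested regime -- perhaps by rerouting linkage paths inside a long nested chain to convert nested pairs into interleaving ones, or by recursing (via \Cref{cor:inducedlinkage}, say) inside the innermost nested pocket to amplify the effective linkage size. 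Nailing down this treatment of the nested regime is what I expect to be the genuine obstacle, and is also where the specific constants $12$, $3$, and $9$ in the statement should earn their keep.
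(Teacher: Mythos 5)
Your setup and the first branch of the Menger dichotomy are fine, and your ``structural tool'' (vertex-disjoint paths with interleaved endpoint intervals are adjacent in a $\times$-free ordered graph) is both correct and in the same spirit as what the paper uses. But the second branch has a genuine gap that is not just technical: a single $(L,R)$-linkage can be entirely pairwise-nested, in which case it yields no $K_{t+1}$-minor at all, and no amount of reasoning about increasing subsequences of $\pi$ can rescue it. Concretely, take $V(G) = [12t]$ with $X = V(G)$ and $E(G) = \{\{i,\,12t{+}1{-}i\} : i\in[6t]\}$ -- a perfect matching, trivially $\times$-free and with $\had(G)=2$. Every $(L,R)$-path has length one and the $3t$ of them are pairwise nested, so $\pi$ is the reversal permutation and every increasing subsequence has length $1$. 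Your proposed ``recurse inside the innermost pocket'' has nothing to grab onto here either. (The lemma still holds for this graph, of course -- it has treewidth at most~$1$ -- but your route to proving it is blocked.)

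The idea you are missing is to split $X$ into \emph{four} consecutive blocks $X_1 < X_2 < X_3 < X_4$, extend to a partition of $V(G)$ into intervals $I_1 < I_2 < I_3 < I_4$, and run Menger \emph{twice}: once for the pair $(I_1,I_3)$ and once for $(I_2,I_4)$. If either Menger application returns a small separation, you are done. Otherwise you obtain two $3t$-linkages, and the $\times$-free hypothesis is then used in a more surgical way than your Jordan-curve lemma: for a path in the $(I_1,I_3)$-linkage whose second vertex lands in $I_2$ or $I_4$, its first edge must cross some edge of every later path, so at most $2t$ paths can have that shape, leaving at least $t$ length-one $(I_1,I_3)$-paths; symmetrically at least $t$ length-one $(I_2,I_4)$-paths. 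Now the interleaving $I_1 < I_2 < I_3 < I_4$ forces every length-one path from the first family to \emph{cross} every length-one path from the second, so $\times$-freeness gives a $K_{t,t}$ subgraph in the contracted linkage graph, hence a $K_{t+1}$-minor. This two-linkage, alternating-intervals structure is precisely what defeats the nested configuration that your single-linkage approach cannot handle, and it is where the constant $12 = 4\cdot 3$ comes from.
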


\begin{proof}
    Note that the theorem holds trivially for~${t \in \{0,1\}}$, so we may assume that~${t \geq 2}$. 
    
    Let~$\alpha \coloneqq \frac{3}{4}$. 
    Let~${X \subseteq V(G)}$ be a set of $12t$ vertices.
    Let~$X_1$, $X_2$, $X_3$, $X_4$ be pairwise disjoint sets of~$3t$ vertices in~$X$ with~${X_1 < X_2 < X_3 < X_4}$. 
    Let~$\{ I_1, I_2, I_3, I_4 \}$ be a partition of~$V(G)$ into intervals such that~${X_i \subseteq I_i}$ for all~${i \in [4]}$, and note that~${I_1 < I_2 < I_3 < I_4}$.
    Suppose~$G$ does not contain an $(I_1,I_3)$-linkage of size~${3t}$. 
    By Menger's Theorem, there exists a separation~${(A,B)}$ of~$G$ with order less than~${3t}$ such that~${X_1 \subseteq I_1 \subseteq A}$ and~${X_3 \subseteq I_3 \subseteq B}$. By \Cref{obs:balancedseps}, this separation is~$\alpha$-balanced for~$X$. 
    Likewise, if~$G$ does not contain an $(I_2,I_4)$-linkage of size~$3t$, then~$G$ contains a ${\alpha}$-balanced separation for~$X$ of order less than~${3t}$. 

    So we may assume that $G$ contains an $(I_1,I_3)$-linkage $\mathcal{P}$ of size~${3t}$. Let~$\mathcal{P} = \{P_1,\dots, P_{3t}\}$ ordered by their leftmost end-vertex. 
    Consider for~${i < j}$ the paths~${P_i = (u_1, \dots, u_{m})}$ and~${P_j =(  w_1, \dots, w_{\ell})}$ where~${u_1, w_1 \in I_1}$, and~${u_m, w_{\ell} \in I_3}$.
    Now suppose that~${u_2 \in I_2}$. 
    Since~${u_1 \leq w_1 \leq u_2 \leq w_{\ell}}$, it follows that there is an edge in~$E(P_j)$ that crosses~${u_1u_2}$. 
    Thus~$P_i$ and~$P_j$ are adjacent since~${(G,\leq)}$ is $\times$-free. 
    As such, if at least~${t+1}$ of the paths in~$\mathcal{P}$ have their second vertex in~$I_2$, then~$G$ contains a $K_{t+1}$-minor, a contradiction. 
    Thus, at most~$t$ of the paths in~$\mathcal{P}$ have their second vertex in~$I_2$. 
    Now suppose that~${w_2 \in I_4}$. 
    Similar to before, since~${u_1 \leq w_1 \leq u_m \leq w_{2}}$ and~${(G,\leq)}$ is $\times$-free, it follows that~$P_i$ and~$P_j$ are adjacent. 
    Thus, at most~$t$ of the paths in~$\mathcal{P}$ have their second vertex in~$I_4$. 
    As such, there exist~${t}$ pairwise vertex-disjoint $(I_1,I_3)$-paths~${P_1',\dots P_{t-1}'}$ of length~$1$. 
    
    By a symmetric argument to the above, we may assume that there exist~$t$ pairwise vertex-disjoint $(I_2,I_4)$-paths~${Q_1',\dots Q_{t-1}'}$ of length~$1$. 
    Since each path is of length~$1$, it follows that~${P_1',\dots P_{t-1}',Q_1',\dots Q_{t-1}'}$ are pairwise vertex-disjoint. 
    Moreover, since~${I_1 < I_2 < I_3 < I_4}$, the edge of~$P_i'$ crosses the edge of~$Q_j'$ for all~${i,j \in [t]}$. 
    Since~$G$ is $\times$-free, $P_i'$ and $Q_j'$ are adjacent. 
    Thus by contracting each~$P_i'$ and~$Q_j'$ into a vertex, we obtain a $K_{t,t}$-minor, and hence a~$K_{t+1}$-minor, a contradiction. 
\end{proof}

\begin{proof}[Proof of \cref{thm:orderedgraphs}]
    Let~${t \coloneqq \had(G)}$. 
    By \Cref{lemma:orderedbalancedsep},         for every set~${X \subseteq V(G)}$ of size~$12t$, $G$ has a $\frac34$-balanced separation for $X$ of order at most~$3t-1$. Hence, by \Cref{lemma:balancedSepsForSets} with~${k = 3t-1}$, ~${c=4}$, and~${q = 12t}$,
    we have~${\tw(G) \leq 12t + 3t - 2 = 15t - 2}$. 
\end{proof}

\citet{DKMW23} previously studied $\times$-free ordered graphs in the context of $\chi$-bounded classes. They stated without proof 
that every outer-string graph admits a $\times$-free ordering. We include the proof for the sake of completion.

Recall that an \defn{outer-string graph} is an intersection graph of a set $\mathcal{S}$ of curves inside a closed disk $\Delta$ where for each curve~${S \in \mathcal{S}}$, there is an endpoint of~$S$, called its \defn{root}, on the boundary~$\boundary(\Delta)$ of the disk~$\Delta$. 
We denote the root of~$S$ by \defn{$\Root(S)$}. 
We assume, without loss of generality, that~${\Root(S) \neq \Root(S')}$ and~${S \cap \boundary(\Delta) = \{\Root(S)\}}$ for distinct~${S,S' \in \mathcal{S}}$.

\begin{lem}[\cite{DKMW23}]
\label{OuterStringOrdered}
    Every outer-string graph~$G$ admits a $\times$-free ordering. 
\end{lem}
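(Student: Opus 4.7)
The approach is to order the vertices of $G$ by the cyclic position of the roots of the corresponding strings on $\boundary(\Delta)$. Fix any point $p \in \boundary(\Delta)$ distinct from every root, and let $\leq$ rank the vertices by the order in which the roots $\Root(S_v)$ are encountered when traversing $\boundary(\Delta)$ from $p$ in a fixed direction. The plan is to show directly from the geometry that $(G,\leq)$ is $\times$-free.

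To verify this, suppose $uv, xy \in E(G)$ are crossing edges with distinct endpoints, where $u<v$, $x<y$, and $u<x<v<y$, so that $\Root(S_u),\Root(S_x),\Root(S_v),\Root(S_y)$ appear in this cyclic order along $\boundary(\Delta)$. The key geometric step is to build a Jordan arc $\alpha \subseteq S_u \cup S_v$ from $\Root(S_u)$ to $\Root(S_v)$ inside $\Delta$. To ensure $\alpha$ is simple, I would parametrise $S_u$ starting from its root, let $z$ be the first point of $S_u \cap S_v$ along this parametrisation, and take $\alpha$ to be the concatenation of the subarc of $S_u$ from $\Root(S_u)$ to $z$ with the subarc of $S_v$ from $z$ to $\Root(S_v)$. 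By the choice of $z$, the first subarc meets $S_v$ only at $z$, and neither $S_u$ nor $S_v$ self-intersects since each is a simple curve; hence $\alpha$ is a Jordan arc.

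Applying the Jordan curve theorem to $\alpha \cup \boundary(\Delta)$ shows that $\alpha$ splits $\Delta$ into two open regions $R_1, R_2$ whose boundaries contain the two arcs of $\boundary(\Delta)\setminus\{\Root(S_u),\Root(S_v)\}$. By the cyclic order of the four roots, one of these arcs contains $\Root(S_x)$ and the other contains $\Root(S_y)$; say $\Root(S_x) \in \overline{R_1}$ and $\Root(S_y) \in \overline{R_2}$. Choose any $q \in S_x \cap S_y$. If $q \in \alpha$, then $q \in S_u \cup S_v$ witnesses edges in both $\{ux,vx\}$ and $\{uy,vy\}$. Otherwise $q$ lies strictly in one of $R_1, R_2$; by connectivity, one of the strings $S_x, S_y$ must then travel from the closure of one region to the interior of the other and hence must cross $\alpha \subseteq S_u \cup S_v$, yielding an edge in $\{ux, uy, vx, vy\}$. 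This verifies the $\times$-free condition.

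The only genuine obstacle is topological care: choosing $\alpha$ so that it is actually a simple arc (handled by the first-intersection trick) and invoking the Jordan curve theorem to identify which root lies on which side. Once those topological ingredients are in place, the combinatorial conclusion is immediate.
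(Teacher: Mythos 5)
Your proof is correct and takes essentially the same approach as the paper: order the vertices by the cyclic order of the roots on $\boundary(\Delta)$, use the two intersecting strings whose roots appear in positions $1$ and $3$ to construct a separator inside the disk, and then apply the Jordan curve theorem to force the other pair of strings to cross it. The only cosmetic difference is that the paper packages the separator as a simple closed curve contained in $\boundary(\Delta)\cup S_u\cup S_v$, whereas you build a Jordan arc $\alpha\subseteq S_u\cup S_v$ with endpoints on $\boundary(\Delta)$ (equivalent after adjoining a boundary arc); your ``first intersection point'' construction of $\alpha$ is a bit more explicit than the paper, which simply asserts that the union contains a suitable simple closed curve.
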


\begin{proof}
    Let~$\Delta$ be a disk with boundary~$\boundary(\Delta)$. 
    Let $\mathcal{S}$ be a set of curves inside~$\Delta$ where each curve has a root on~$\boundary(\Delta)$ such that~$G$ is the intersection graph of~$\mathcal{S}$. 
    Let~$\leq$ be any linear order on~$V(G)$ obtained from the clockwise cyclic order given by the roots of~$\mathcal{S}$ on~$\Delta$. 
    We claim that with this order, $G$ is $\times$-free. 
    Indeed, if~${xy, uw \in E(G)}$ with~${x < u < y < w}$, then by the Jordan Curve Theorem, ${\boundary(\Delta) \cup \{ x, y \}}$ contains a simple closed curve~$C$ such that one region of~${\Delta \setminus C}$ contains a point of~${u \setminus \{ \Root(u) \}}$, and the other region contains a point of~${w \setminus \{\Root(w)\}}$. Since~$uw \in E(G)$, these two points are joined by a curve~$C'$ in~${(u \cup w) \setminus \{ \Root(u), \Root(w) \}}$.
    Then $C'$ has nonempty intersection with~$C$, and since~${(u \cup w) \setminus \{ \Root(u), \Root(w) \}}$ is disjoint from~$\boundary(\Delta)$, at least one of~$u$ or~$w$ intersects one of~$x$ or~$y$, as desired. 
\end{proof}

\cref{thm:orderedgraphs,OuterStringOrdered} imply \cref{thm:outerstring}.

We conclude this section with the following open problem.
\begin{openproblem}
    For which ordered patterns $X$ is the class of $X$-free ordered graphs linearly $(\tw,\had)$-bounded?
\end{openproblem}

\section{Outer-string graphs on surfaces}
\label{sec:outerstring}

In this section we prove \cref{thm:outerstringgeneral} which shows that outer-string graphs on surfaces of bounded Euler genus are linearly $(\tw,\had)$-bounded. We in fact prove a generalisation that allows for more than one boundary component where strings can be rooted. To formalise this, we need the following definitions. 

Let~$\Sigma_0$ be a surface\footnote{A surface is a two-dimensional manifold with (possibly empty) boundary.} without boundary and let~${c \geq 1}$ be some integer. 
We say that a set~${\Sigma \subseteq \Sigma_0}$ is a \defn{surface with~$c$ cuffs} if~$\Sigma$ can be obtained from~$\Sigma_0$ by deleting the interiors of~$c$ pairwise disjoint closed disks~${\Delta_1,\dots,\Delta_c}$. 
The disks~$\Delta_i$ are called the \defn{cuffs} of~$\Sigma$ and the \defn{Euler-genus} of~$\Sigma$ is the Euler-genus of~$\Sigma_0$. 
Note that a graph~$G$ has an embedding on~$\Sigma$ if and only if it has an embedding on~$\Sigma_0$.

Let~${c \geq 1}$ and~${g \geq 0}$ be integers. 
An \defn{outer-string diagram} of \defn{Euler-genus $g$} with \defn{$c$ cuffs} is a tuple $(\mathcal{S},\Sigma)$ where $\mathcal{S}$ is a family of strings drawn in a surface $\Sigma$ of Euler-genus $g$ with $c$ cuffs such that for each $S\in\mathcal{S}$ there exists a unique cuff $\Delta_S$ where $S\cap \Delta_S\neq\emptyset$ and $S$ intersects $\Delta_S$ exactly in one of its endpoints.
We call the unique point $S\cap \Delta_S$ the \defn{root} of $S$ and denote it by $\Root(S)$.

A graph $G$ is an \defn{outer-string graph} of \defn{Euler-genus $g$} with \defn{$c$ cuffs}, or a \defn{$(g,c)$-outer-string graph}, if there exists an outer-string diagram $(\mathcal{S},\Sigma)$ of Euler-genus $g$ with $c$ cuffs such that $G$ is the intersection graph\footnote{That is, $V(G)=\mathcal{S}$ and $st\in E(G)$ if and only if $s\cap t\neq\emptyset$.} of $\mathcal{S}$.
We say that $(\mathcal{S},\Sigma)$ is a \defn{$(g,c)$-outer-string diagram} for $G$.

We first prove a lemma to help us embed a $K_{3,d}$ into a surface.

For a surface $\Sigma$ with cuffs, a \defn{$\Sigma$-rooted tripod} is the image~$T$ of an embedding of~$K_{1,3}$ into~$\Sigma$ such that the intersection of the boundary of~$\Sigma$ and~$T$ are the leafs of~$K_{1,3}$. 
The point in the boundary of~$\Sigma$ where a leaf is embedded is a \defn{root} of~$T$ and the point where the vertex of degree~$3$ is embedded is the \defn{centre} of~$T$. 

\begin{lem}\label{lemma:embeddingK3n}
    Let~${g \geq 0}$, ${c \geq 1}$, and~${n \geq 3}$ be integers and let~$\Sigma$ be a surface of Euler-genus~$g$ with~$c$ cuffs. Assume there is a set~$\mathcal{T}$ of~$n$ pairwise disjoint $\Sigma$-rooted tripods in~$\Sigma$ such that there are, not necessarily distinct, cuffs~$\Delta$, $\Delta'$ of~$\Sigma$ and two disjoint segments~$A, B$ of the boundary~$\boundary(\Delta)$ of~$\Delta$, such that each tripod has exactly one of its roots in each of the sets~$A$, $B$, and~$\boundary(\Delta') \setminus (A \cup B)$. 
    Then there is a drawing without crossings of~$K_{3,n}$ in~$\Sigma \cup \Delta \cup \Delta'$.
\end{lem}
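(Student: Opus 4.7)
The plan is to place three new vertices $u_1, u_2, u_3$ suitably in $\Delta \cup \Delta'$ and realise each edge of $K_{3,n}$ as the concatenation of an arc from one of these $u_k$'s to a root of a tripod with the corresponding arm of that tripod. For each tripod $T_i$, denote its centre by $v_i$ and its three roots by $a_i \in A$, $b_i \in B$, $c_i \in \boundary(\Delta') \setminus (A \cup B)$, with $\alpha_i, \beta_i, \gamma_i$ the corresponding arms. The bipartition of $K_{3,n}$ will be $\{u_1, u_2, u_3\}$ versus $\{v_1, \dots, v_n\}$. Since distinct tripods are pairwise disjoint and each tripod's three arms meet only at its own centre, no crossings can occur along the tripod portions of the edges. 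It therefore suffices to draw, inside $\Delta \cup \Delta'$, three mutually non-crossing fans: one from $u_1$ to $a_1, \dots, a_n$, one from $u_2$ to $b_1, \dots, b_n$, and one from $u_3$ to $c_1, \dots, c_n$.

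If $\Delta \neq \Delta'$, choose a simple arc $\sigma \subseteq \Delta$ with endpoints in $\boundary(\Delta) \setminus (A \cup B)$ that separates $A$ from $B$ inside $\Delta$, splitting $\Delta$ into two sub-disks $\Delta_A, \Delta_B$ with $A$ on the boundary of $\Delta_A$ and $B$ on the boundary of $\Delta_B$. Placing $u_1 \in \Delta_A$, $u_2 \in \Delta_B$, and $u_3 \in \Delta'$, we draw the three required fans inside $\overline{\Delta_A}$, $\overline{\Delta_B}$, and $\overline{\Delta'}$ respectively; this is possible because inside any closed disk a star with its centre in the interior and its leaves on the boundary admits a planar embedding. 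The three fans then lie in pairwise interior-disjoint closed disks, so they do not cross each other.

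The hard case will be $\Delta = \Delta'$, in which $c_1, \dots, c_n$ are split between the two arcs $C_1, C_2$ forming $\boundary(\Delta) \setminus (A \cup B)$. Here the plan is to build the three fans in a nested fashion inside $\Delta$. First, place $u_1 \in \Delta$ and draw a planar fan to $a_1, \dots, a_n$; this partitions $\Delta$ into $n$ open sectors, exactly one of which, the \emph{outer} sector bounded by $\boundary(\Delta) \setminus A$, contains $b_1, \dots, b_n, c_1, \dots, c_n$ on its boundary. Next, place $u_2$ in that outer sector and draw a planar fan to $b_1, \dots, b_n$; this further partitions the outer sector into $n$ sub-sectors, and exactly one of these, call it $R$, is bounded by $C_1 \cup C_2$ together with the outermost rays $u_1 a_1, u_1 a_n, u_2 b_1, u_2 b_n$. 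The region $R$ is topologically an open disk and contains all of $c_1, \dots, c_n$ on its boundary, so place $u_3 \in R$ and draw a planar fan to the $c_i$'s. Concatenating each fan arc with the appropriate tripod arm will then produce a crossing-free drawing of $K_{3,n}$ in $\Sigma \cup \Delta \cup \Delta'$. The main obstacle is precisely this $\Delta = \Delta'$ case: the third root set straddles two disjoint arcs of $\boundary(\Delta)$, and the nested-fans construction is what lets a single point $u_3$ reach all of them without interfering with the other two fans.
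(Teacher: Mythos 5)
Your proof takes essentially the same approach as the paper: augment the tripods by drawing, inside $\Delta \cup \Delta'$, three pairwise-disjoint stars (one per root set $A$, $B$, $\boundary(\Delta')\setminus(A\cup B)$) whose leaves are the roots, so that $\bigcup\mathcal{T}$ together with the three stars is a crossing-free drawing of $K_{3,n}$. The paper merely asserts, with a supporting figure, that the three stars can be embedded disjointly in $\Delta \cup \Delta'$; your argument carefully verifies this in both cases, and in particular your nested-fans construction for $\Delta=\Delta'$ (first fan to $A$, second fan in the outer sector to $B$, third fan in the remaining outer region whose boundary contains both arcs of $\boundary(\Delta)\setminus(A\cup B)$) is correct and supplies the justification the paper leaves implicit.
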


\begin{proof}
    Let~$R$ denote the set of all the roots of all the tripods in~$\mathcal{T}$. 
    In~${\Delta \cup \Delta'}$ we embed the disjoint union of three pairwise disjoint copies of~$K_{1,n}$, denoting their image by~$S_A$, $S_B$, and $S_{\boundary(\Delta') \setminus (A \cup B)}$, such that for each~${X \in \{A,B,\boundary(\Delta') \setminus (A \cup B)\}}$, the leafs of the copy mapped to~$S_X$ are mapped to~${R \cap X}$. 
    Note that~$S_A$ and~$S_B$ are in~$\Delta$ and $S_{\boundary(\Sigma) \setminus (A \cup B)}$ is embedded into~$\Delta'$.
   
    Now~$\bigcup \mathcal{T} \cup S_A \cup S_B \cup S_{\boundary(\Sigma) \setminus (A \cup B)}$ is the image of an embedding~$K_{3,n}$, as desired, see \Cref{fig:K33onthetorusnew}. 
\end{proof}

\begin{figure}[ht]
    \centering
    \includegraphics{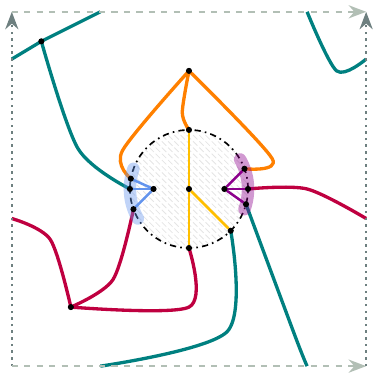}
    \caption{Embedding a $K_{3,3}$ on the torus $\Sigma \cup \Delta$: given three pairwise disjoint $\Sigma$-rooted tripods, and two disjoint segments of $\boundary(\Delta)$ such that each tripod has exactly one leg in each of them, we can add in $\Delta$ three pairwise disjoint $\Delta$-rooted tripods to embed~$K_{3,3}$. }
    \label{fig:K33onthetorusnew}
\end{figure}

\begin{lem}\label{lemma:outerstringbalancedsep}
    Let~${c,t \geq 1}$ and~${g \geq 0}$ be integers, let~$(\mathcal{S}, \Sigma)$ be a $(g,c)$-outer-string-diagram for a graph~$G$. 
    Let~$s$ be an integer for which~$K_s$ does not embed into~$\Sigma$ and let~$d$ be an integer for which~$K_{3,d}$ does not embed into~$\Sigma$. 
    Let~${m \coloneqq \left\lfloor \frac{1}{3}\binom{s}{2}(2dt-1)\right\rfloor+1}$ and let~${k \coloneqq 4m + 2dtc}$. 
    If~$G$ does not contain a $K_{t+1}$-minor, then for every set of exactly~$csk$ vertices of~$G$ there exists an $\alpha$-balanced separation of order less than~$k$ in $G$, where~${\alpha = \frac{cs-1}{cs}}$. 
\end{lem}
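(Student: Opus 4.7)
The plan is to argue by contradiction. Suppose $X\subseteq V(G)$ has $|X|=csk$ and admits no $\alpha$-balanced separation of order less than $k$. I will derive a contradiction by producing an embedding of $K_{3,d}$ in $\Sigma$ (suitably extended via Lemma~\ref{lemma:embeddingK3n}), contradicting the choice of $d$.

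First, I partition $X$ into $cs$ arc-groups $Y_1,\ldots,Y_{cs}$, each of size $k$, by cutting each cuff $\Delta_i$ along the cyclic order of the $X$-roots into sub-arcs carrying exactly $k$ roots each. Since $|X|=csk$, this yields $cs$ arcs $A_1,\ldots,A_{cs}$ in total. For any pair $j\neq j'$, the non-existence of an $\alpha$-balanced separation of order less than $k$ forces, via Lemma~\ref{obs:balancedseps} with constant $cs$, that no $(Y_j,Y_{j'})$-separation of order less than $k$ exists: such a separation $(A,B)$ would satisfy $|X\cap A|\geq|Y_j|=k$ and $|X\cap B|\geq|Y_{j'}|=k$, hence be $\alpha$-balanced. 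By Menger's theorem, $G$ admits a $(Y_j,Y_{j'})$-linkage of size at least $k$ for every such pair.

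I then upgrade each linkage to an induced linkage. Since $G$ has no $K_{t+1}$-minor, $\had(G)\leq t$, so by Corollary~\ref{cor:inducedlinkage} each linkage of size $k\geq 2dt$ contains an induced sub-linkage of size $d$. The key geometric point is that in an induced linkage, strings belonging to distinct paths are pairwise non-intersecting; so following each induced path through its sequence of pairwise-intersecting strings yields $d$ pairwise disjoint simple arcs in $\Sigma$ joining the relevant cuff-arcs.

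Finally, I assemble three such induced linkages into $d$ pairwise disjoint $\Sigma$-rooted tripods satisfying the hypothesis of Lemma~\ref{lemma:embeddingK3n}. I pick three arcs $A_{j_1},A_{j_2},A_{j_3}$ in the configuration required by Lemma~\ref{lemma:embeddingK3n} (two on one cuff $\Delta$ and one on another cuff $\Delta'$), and for each of the three pairs take the family of $d$ pairwise disjoint simple arcs obtained above. Combining one arc from each of the three families at a common interior point of $\Sigma$ yields a tripod. The delicate step is selecting triples so that the resulting $d$ tripods are pairwise disjoint; the parameters $m=\lfloor\tfrac{1}{3}\binom{s}{2}(2dt-1)\rfloor+1$ and $k=4m+2dtc$ are designed to absorb the accounting of unavoidable crossings, with the $\binom{s}{2}$ term bounding crossings via the non-embedding of $K_s$ in $\Sigma$ (too many pairwise crossings would force a drawing of $K_s$) and the $2dtc$ term covering per-cuff rerouting costs. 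Applying Lemma~\ref{lemma:embeddingK3n} to the resulting tripods produces a drawing of $K_{3,d}$ in $\Sigma\cup\Delta\cup\Delta'$, contradicting the hypothesis on $d$. The main obstacle I expect is precisely this final construction: selecting pairwise disjoint tripods by topological rerouting and relating the $K_s$- and $K_{3,d}$-non-embedding conditions to the exact values of $m$ and $k$.
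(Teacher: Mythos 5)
Your high-level strategy — derive a contradiction by producing $d$ disjoint $\Sigma$-rooted tripods and feeding them to \cref{lemma:embeddingK3n} — matches the paper, and your use of \cref{obs:balancedseps}, Menger, and \cref{cor:inducedlinkage} is in the right spirit. But there are concrete gaps at the two places the proof actually bites.

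First, the initial partition is ill-defined: you cut \emph{each} cuff $\Delta_i$ into sub-arcs carrying exactly $k$ roots, but $|X\cap V(\Delta_i)|$ need not be a multiple of $k$, so some cuff cannot be cut that way. The paper avoids this by first applying pigeonhole to find a single cuff $\Delta$ with at least $s(4m+2dtc)$ roots of $X$, and only that cuff is subdivided, into $s$ intervals. This also explains why $m$ involves $\binom{s}{2}$ rather than $\binom{cs}{2}$: only $s$ intervals on one cuff are in play.

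Second, and more seriously, the tripod construction fails. You propose to take three induced linkages between three arcs and ``combine one arc from each of the three families at a common interior point of $\Sigma$.'' Three pairwise-disjoint simple arcs have no common point, so this gives no tripod; and a $\Sigma$-rooted tripod needs to be a genuinely \emph{connected} embedded $K_{1,3}$ meeting the boundary only at its three leaves. The paper gets connectivity for free in two different ways. It first shows (the Claim) that any $(I_i,I_j)$-linkage cannot have more than $2dtc$ paths of length at least $2$: by pigeonhole $2dt$ of them have their second string rooted at a common cuff $\Delta'$, by \cref{cor:inducedlinkage} an induced sub-linkage of size $d$ exists, and each such path's union of strings already \emph{is} a $\Sigma$-rooted tripod with one root on $I_i$, one on $I_j$, one on $\boundary(\Delta')$ — so \cref{lemma:embeddingK3n} kills it. Using this Claim, the surviving linkages $\mathcal{P}'_{i,j}$ consist of $4m$ paths of length \emph{one}, i.e.\ pairs of directly intersecting strings $S_{P,1}\cup S_{P,2}$. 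Tripods are then built from \emph{crossing pairs} of these length-one paths: picking one $P\in\mathcal{P}'_{i,j}$ for each $(i,j)$ gives a family $\mathcal{F}$; if no two $S_P$, $S_Q$ ($P,Q\in\mathcal{F}$) were connected, one could embed $K_s$ into $\Sigma\cup\Delta$, contradicting the choice of $s$; so some crossing pair exists, and $S_P\cup S_Q$ is a connected set with four boundary roots, hence contains a tripod. Your appeal to ``too many pairwise crossings would force a drawing of $K_s$'' points the other way — the paper uses $K_s$ non-embedding to force a \emph{non}-disjoint pair, not to bound crossings.

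Third, the disjointness of the final $d$ tripods, which you flag as ``the delicate step'' and leave to the parameter design, is precisely where the lemma needs real work. The paper runs a greedy selection of $m$ crossing pairs $(P_1,Q_1),\dots,(P_m,Q_m)$ with $P_\ell,Q_\ell$ vertex-disjoint from all earlier ones (possible because each $\mathcal{P}'_{i,j}$ has $4m$ paths of length one), then by pigeonhole over the $\binom{s}{2}$ possible pairs of intervals extracts $2dt$ tripods sharing a root-pair $(i,j)$, and finally applies \citet{DM82} (\cref{thm:duchetmeyniel}) to the contracted graph to find $d$ that are pairwise disjoint. Without these steps the parameters $m$ and $k$ are unmotivated and the construction does not close.
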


\begin{proof}
    Assume for a contradiction that~$G$ has no $K_{t+1}$-minor and that there is a set~${X \subseteq V(G)}$ of size exactly~${cs(4m + 2dtc)}$ such that $G$ has no $\alpha$-balanced separation of order less than~$k$. 
    
    Since~$\Sigma$ has $c$ cuffs, there exists a cuff $\Delta$ such that~${\lvert X \cap V(\Delta) \rvert \geq s(4m + 2dtc)}$. 
    Divide $\boundary{(\Delta)}$ into~$s$ segments~${I_1, \dots, I_s}$ such that setting~${X_i \coloneqq X \cap V(I_i)}$ for all~${i \in [s]}$, we have~${\lvert X_i \rvert \geq 4m + 2dtc}$.

    \begin{claim}
        For distinct~${i,j \in [s]}$, every~${(I_i,I_j)}$-linkage consisting of paths of length at least~$2$ has size less than~$2dtc$. 
    \end{claim}

    \begin{subproof}
        Assume for a contradiction that there is an $(I_i,I_j)$-linkage consisting of~$2dtc$ paths of length at least~$2$. 
        By the pigeonhole principle, at least $2dt$ of the paths have their second string rooted in the boundary of the same cuff~$\Delta'$. 
        By \Cref{cor:inducedlinkage}, at least~${d}$ of the paths form an induced $(I_i,I_j)$-linkage~$\mathcal{P}$. 

        Now clearly the union of the strings of each of the paths contain a $\Sigma$-rooted tripod with exactly one root contained in~$I_i$, exactly one root contained in~$I_j$, and exactly one root contained in~$\boundary(\Delta') \setminus (I_i \cup I_j)$. 
        Hence, by \cref{lemma:embeddingK3n} there exists a drawing of $K_{3,d}$ on $\Sigma\cup\Delta\cup\Delta'$ without crossings, a contradiction. 
    \end{subproof}
    
    For distinct~${i,j \in [s]}$, by Menger's Theorem, there is an ${(I_i,I_j)}$-linkage~$\mathcal{P}_{i,j}$ of size at least~${4m + 2dtc}$ since, by \cref{obs:balancedseps}, any separation~$(A,B)$ of~$G$ with~${X_i \subseteq I_i \subseteq A}$ and~${X_j \subseteq I_j \subseteq B}$ is $\alpha$-balanced for~$X$. 
    By the claim above, $\mathcal{P}_{i,j}$ contains a $(I_i,I_j)$-linkage~$\mathcal{P}'_{i,j}$ consisting of~${4m}$ paths of length one. 
    For each path~${P \in \mathcal{P}'_{i,j}}$, let~$S_{P,1}$ denote the string rooted in~$I_i$, let~$S_{P,2}$ denote the string rooted in~$I_j$, and let~$S_P$ denote the union of the two strings. 

    Let~$\mathcal{F}$ be a set consisting of exactly one path from each linkage~$\mathcal{P}_{i,j}$. 
    If for each pair~$P,Q$ of distinct paths in~$\mathcal{F}$ we have that~$S_P$ and~$S_Q$ are disjoint, then $\Sigma \cup \Delta$ contains an embedding of~$K_s$ without crossings, obtained by embedding the disjoint union of~$s$ copies of~$K_{1,s}$ into~$\Delta$ where the $i$-th copy of~$K_{1,s}$ is embedded in such a way that the leafs are the roots of the strings in~$P$ that are in~$I_i$.  
    So each such set will contain a pair~$P,Q$ of distinct paths such that~$S_P \cup S_Q$ is connected. 
    We call such a pair \defn{crossing}. 
    Clearly, each crossing pair contains a $\Sigma$-rooted tripod.

    Since each~$\mathcal{P}'_{i,j}$ has size at least $4m$ and each path in $\mathcal{P}'_{i,j}$ has length $1$', we now can greedily choose crossing pairs~$(P_1, Q_1), \dots, (P_m, Q_m)$ such that both~$P_\ell$ and~$Q_\ell$ are vertex-disjoint from~${\bigcup \{ P_z \cup Q_z \colon  z < \ell \}}$. 
    By the pigeonhole principle, there is a pair~${(i,j) \in [s]^2}$ and a set~$\mathcal{T}$ of at least~${2dt}$ $\Sigma$-rooted tripods, contained in the crossing pairs, with exactly one root in~$I_i$ and exactly one root in~$I_j$. 
    Now in the graph obtained by contracting each set of strings in the crossing pairs corresponding to the tripods in~$\mathcal{T}$ to a single vertex, we apply \Cref{thm:duchetmeyniel} to obtain an independent set of size at least~${d}$. 
    Hence, the corresponding subset of~$\mathcal{T}$ contains only pairwise disjoint tripods. 
    By \cref{lemma:embeddingK3n}, we get an embedding of~${K_{3, d}}$ into~$\Sigma\cup \Delta$, a contradiction as before. 
\end{proof}

We prove the following folklore lemma for the sake of completeness.

\begin{lem}
    \label{cor:geni}
    For every integer~${g \geq 0}$, the Euler-genus of~$K_{\lceil \sqrt{6g} \rceil + 5}$ is at least~${g+1}$ and the Euler-genus of~${K_{3,2g+3}}$ is at least~${g+1}$. 
\end{lem}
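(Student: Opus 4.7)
The plan is to combine Euler's formula with the standard girth bound on the number of faces. Suppose a connected simple graph~$G$ with~$n$ vertices, $m$ edges, and girth at least~$\ell$ admits a $2$-cell embedding in a surface of Euler-genus~$\gamma$. Then~${n - m + f = 2 - \gamma}$, where~$f$ is the number of faces, and since every face-boundary walk traverses at least~$\ell$ edges while each edge lies on at most two face-boundaries, $2m \geq \ell f$. Eliminating~$f$ yields
\[
    \gamma \;\geq\; 2 - n + m\cdot \frac{\ell - 2}{\ell}.
\]
Any embedding of a connected graph can be converted to a $2$-cell embedding on a surface of no larger Euler-genus (by cutting along non-contractible curves lying in non-disk faces and capping them off with disks), so this bound applies to arbitrary embeddings.

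Applied to~$K_n$ with~${\ell = 3}$ and~${m = \binom{n}{2}}$, the inequality becomes~${\gamma \geq (n-3)(n-4)/6}$. Setting~${n \coloneqq \lceil \sqrt{6g}\rceil + 5}$, we have~${n - 3 \geq \sqrt{6g} + 2}$ and~${n - 4 \geq \sqrt{6g} + 1}$, so
\[
    (n-3)(n-4) \;\geq\; (\sqrt{6g}+2)(\sqrt{6g}+1) \;=\; 6g + 3\sqrt{6g} + 2 \;>\; 6g.
\]
Thus~${\gamma > g}$, and since~$\gamma$ is a non-negative integer, ${\gamma \geq g+1}$. Applied to~$K_{3,n}$ with~${\ell = 4}$ and~${m = 3n}$, the inequality reads~${\gamma \geq (n-2)/2}$; substituting~${n \coloneqq 2g+3}$ gives~${\gamma \geq (2g+1)/2 > g}$, hence again~${\gamma \geq g+1}$.

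The only mildly nontrivial point is establishing the Euler-genus inequality above, and this is entirely standard; everything else is arithmetic substitution. I therefore do not anticipate any real obstacle in writing out the proof, beyond being careful that the reduction to a $2$-cell embedding is stated correctly.
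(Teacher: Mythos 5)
Your proof is correct and takes essentially the same approach as the paper: both derive the Euler-genus lower bound from Euler's formula together with the girth bound on face lengths (girth $3$ for $K_s$, girth $4$ for the bipartite $K_{3,d}$), and the arithmetic is equivalent after rearrangement (your $(s-3)(s-4) > 6g$ is the same inequality as the paper's $2\binom{s}{2} > 6(s+g-2)$). The only cosmetic difference is that you handle $g = 0$ uniformly, whereas the paper disposes of it separately via non-planarity of $K_5$ and $K_{3,3}$.
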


\begin{proof}
    Since~$K_5$ and~$K_{3,3}$ are non-planar, we may assume that~${g \geq 1}$. 
    By Euler's Formula, for ${n \geq 3}$ the maximum number of edges of an $n$-vertex graph embeddable in a surface of Euler genus~$g$ is~$3(n+g-2)$. 
    For~${s \coloneqq \lceil \sqrt{6g} \rceil + 5}$,
    \begin{align*}
        2|E(K_s)| 
        = 2\mbox{$\binom{s}{2}$} 
        &\geq {(\sqrt{6g} + 5)(\sqrt{6g} + 4)} \\
        &= 6g + 9 \sqrt{6g} + 20 \\
        &= 6g + 6 (\sqrt{6g} + 6) - 16 + 3\sqrt{6g}\\
        &> 6g + 6 (\lceil \sqrt{6g} \rceil + 5) - 12\\
        &= 6( s + g - 2)
    \end{align*}
    Hence, $K_s$ is not embeddable in a surface of Euler-genus~$g$.

    By Euler's Formula, for $n\geq 3$ the maximum number of edges of an $n$-vertex bipartite graph embeddable in a surface of Euler genus~$g$ is~$2(n+g-2)$. 
    For~${d \coloneqq 2g+3}$, 
    \[
       |E(K_{3,d})| = 3d = 6g + 9 > 2 (g + 2g + 3 - 2) = 2 ( d + g - 2 ). 
    \]
    Hence, $K_{3,d}$ is not embeddable in a surface of Euler-genus~$g$. 
\end{proof}

\outerstringgeneral*

\begin{proof}
    Let~${d \coloneqq 2g+3}$ and let~${s \coloneqq \lceil \sqrt{6g} \rceil + 5}$. 
    By \Cref{cor:geni}, the Euler-genus of~$K_{3,d}$ is at least~${g+1}$ and the Euler-genus of~$K_{s}$ is at least~${g+1}$. 
    Let~${t \coloneqq \had(G)}$, let~${m \coloneqq \lfloor \frac{1}{3} \binom{s}{2} (2dt -1) \rfloor + 1}$, let~${k \coloneqq 4m + 2dtc}$, and let~${\alpha \coloneqq \frac{cs-1}{cs}}$. 
    By \Cref{lemma:outerstringbalancedsep}, for every set~$X$ of exactly~${csk}$ vertices of~$G$ there exists an $\alpha$-balanced separation of order less than~$k$ in~$G$. 
    Hence, by \Cref{lemma:balancedSepsForSets}, 
    \begin{align*}
        \tw(G) 
            &< (cs + 1) (4m + 2dtc) \\
            &\leq (c(\sqrt{6g} + 6) + 1) \big( 4(\mbox{$\frac{1}{3}$} \mbox{$\binom{s}{2}$} (2(2g+3)t)) + 2(2g+3)tc \big) \\
            &\leq \big( c(\sqrt{6g} + 6) + 1 \big) \big ( 4 (2g+5)(4g+6)t + c(4g+6)t \big)\\
            &\leq c(8g+20 + c) (4g + 6)(\sqrt{6g} + 7)  t\\
            &\leq ( 80 g^{5/2}c + 224 g^2c + 345 g^{1.5}c^2  +966gc + 960c )t\\
            &\leq ( 1615 g^{5/2} c^2 + 960c ) t.
        \qedhere
    \end{align*}
\end{proof}

\section{Perturbations of circle graphs}
\label{sec:circleperturbations}

Recall that a circle graph is the intersection graph of a set of chords on a circle. We use the following combinatorial description, which allows us to easily generalise the geometric setting to perturbations of circle graphs, as well as streamline the notation in our proofs.

A \defn{cyclic order} on a set $C$ is a ternary relation $R \subseteq C^3$ such that: 
\begin{enumerate}
    [label=(\arabic*)]
   \item if ${(a,b,c) \in R}$, then ${(b,c,a) \in R}$,
   \item if ${(a,b,c) \in R}$, then ${(a,c,b) \notin R}$,
   \item if ${(a,b,c) \in R}$ and ${(a,c,d) \in R}$, then $(a,b,d) \in R$, and
   \item for all distinct $a,b,c$, either $(a,b,c) \in R$ or ${(a,c,b) \in R}$.
\end{enumerate}
These axioms imply that if  $(a,b,c)\in R$ then $a,b,c$ are distinct. For example, if $(a,a,b) \in R$, then $(b,a,a) \in R$ by (1), and then $(b,a,a) \notin C$ by (2), which is a contradiction. 
By abuse of notation, we henceforth use~$C$ to denote both the ground set and the relation of the cyclic order, 
and also write~${(x_1, \dots, x_n) \in C}$ for a tuple~$(x_1, \dots, x_n)$ of distinct elements of the ground set if~${n \geq 3}$ and~${(x_i,x_j,x_k) \in C}$ whenever~${i<j<k}$. 
An \defn{interval} of~$C$ is a tuple~$(x_1, \dots, x_n) \in C$ with~$n \geq 3$ such that for every~$x$ in the ground set of $C$, if~$(x_1, x, x_n) \in C$, then~${x \in \{x_1,\dots,x_n\}}$. 

A \defn{chord} of~$C$ is a set of two distinct groundset elements of~$C$, which we refer to as the \defn{endpoints} of the chord. 
Two chords are \defn{independent} if their endpoints are distinct.
Two independent chords~${\{x,y\}}$ and~${\{x',y'\}}$ \defn{cross} if~${(x,x',y,y') \in C}$. 
A \defn{chord-diagram} is a tuple~$(C,D)$ consisting of a cyclic order~$C$ and a set~$D$ of pairwise independent chords of~$C$. 
The \defn{crossing graph} of a chord-diagram~$(C,D)$ is the graph with vertex set~$D$ where two chords in $D$ are adjacent if they cross. 
We also call this graph the  \defn{circle graph represented by~$(C,D)$}.  
Without loss of generality, we will always assume that each~${x \in C}$ is an endpoint of some chord in~$D$.

For a graph~$G$, consider a colouring~${\zeta \colon V(G) \to [k]}$, as well as an auxiliary graph~$H$ on~$[k]$ which may contain loops but no parallel edges. 
We call the tuple~$(H,\zeta)$ a \defn{$k$-colour-perturbation-model} of~$G$. 
The \defn{$(H,\zeta)$-perturbed} $G$ is the graph with vertex set~$V(G)$ where two vertices~$v$ and~$w$ are adjacent if either: 
\begin{itemize}
   \item ${vw \in E(G)}$ and~${\zeta(v) \zeta(w) \notin E(H)}$, or
   \item ${vw \notin E(G)}$ and~${\zeta(v) \zeta(w) \in E(H)}$. 
\end{itemize}

For an integer~$r\geq 0$, a graph~$G$ is a \defn{rank-$r$-perturbation of a graph~$G_0$} if the binary adjacency matrix of~$G$ is equal to the matrix obtained from the sum of a binary adjacency matrix of~$G_0$ with a symmetric ${(\lvert V(G) \rvert \times \lvert V(G) \rvert)}$-matrix~$P$ of rank~$r$, which we call the \defn{perturbation matrix}, by changing all diagonal entries to~$0$~\cite{McCarty21}.

A rank-$1$-perturbation of a graph~$G$ corresponds exactly to the graph obtained from~$G$ by complementing the edges on some set of vertices. 
Moreover, the complementation of the edges between two disjoint sets~$X$ and~$Y$ of vertices can be modelled by three rank-$1$-perturbations: complement on~${X \cup Y}$, on~$X$, and on~$Y$. 
A rank-$r$-perturbation can be understood by successively performing a bounded number of rank-$1$-perturbations, as  illustrated in the following lemma. 

\begin{lem}
    \label{lem:rank-r-pertubation}
    If a graph~$G$ is a rank-$r$-perturbation of a graph~$G_0$, then there exists a $k$-colour-perturbation-model~$(H,\zeta)$ for some~${k \leq 2^r}$, such that~$G$ is the~${(H,\zeta)}$-perturbed~$G_0$. 
\end{lem}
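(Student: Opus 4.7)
The plan is to exploit the rank-$r$ structure of the perturbation matrix $P$ (viewed as a symmetric matrix over $\mathbb{F}_2$) to define the colouring $\zeta$ directly from the rows of $P$. The point is that two vertices whose rows in $P$ agree will contribute identically to every perturbation, so the ``colour'' of $v$ should encode nothing more than its row.

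Concretely, I would first observe that since $P$ has rank $r$ over $\mathbb{F}_2$, its row space is a subspace of $\mathbb{F}_2^{\lvert V(G) \rvert}$ of dimension $r$, and in particular contains at most $2^r$ vectors. Hence the map sending each vertex $v$ to its associated row $P_v$ of $P$ takes at most $2^r$ distinct values. Choosing $k \leq 2^r$ equal to the number of distinct rows and a bijection between these rows and $[k]$, I define $\zeta \colon V(G) \to [k]$ so that $\zeta(v) = \zeta(v')$ if and only if $P_v = P_{v'}$.

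Next, I would verify the key compatibility property: the entry $P_{v,w}$ depends only on $\zeta(v)$ and $\zeta(w)$. Indeed, if $\zeta(v) = \zeta(v')$ then rows $v$ and $v'$ of $P$ coincide, so $P_{v,w} = P_{v',w}$ for every $w$; by symmetry of $P$ this means columns $v$ and $v'$ also coincide, and combined with the analogous statement for $w$ we obtain $P_{v,w} = P_{v',w'}$ whenever $\zeta(v) = \zeta(v')$ and $\zeta(w) = \zeta(w')$. This lets me unambiguously define $H$ on vertex set $[k]$ (with loops allowed) by declaring $ij \in E(H)$ precisely when $P_{v,w} = 1$ for some, equivalently every, pair $(v,w)$ with $\zeta(v) = i$ and $\zeta(w) = j$.

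It then remains to check that $G$ is the $(H,\zeta)$-perturbed $G_0$, which is a direct unpacking of definitions: for distinct $v,w$ the $(v,w)$-entry of the adjacency matrix of $G$ equals $A(G_0)_{v,w} + P_{v,w} \pmod 2$ (the diagonal correction in the definition of the rank-$r$-perturbation being irrelevant off the diagonal), and this equals $1$ in exactly the two cases listed in the definition of an $(H,\zeta)$-perturbation. The only conceptual point in the argument is using the symmetry of $P$ to argue that the perturbation pattern factors through the colour pair $(\zeta(v),\zeta(w))$; everything else is bookkeeping.
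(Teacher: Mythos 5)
Your proof is correct and follows essentially the same approach as the paper: both colour each vertex by its row/column of the perturbation matrix $P$ (over $\GF(2)$), use the rank-$r$ bound to cap the number of colours at $2^r$, and use the symmetry of $P$ to see that the off-diagonal entries of $P$ factor through the colour pair, which makes $H$ well-defined. The only cosmetic difference is that the paper indexes colours by the entire $2^r$-element column space rather than by the distinct rows that actually occur; your presentation of the well-definedness argument is actually more explicit than the paper's, which simply asserts it is straightforward to verify.
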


\begin{proof}
    Let~$A$ denote the binary adjacency matrix of~$G$ that is obtained from~${A_0 + P}$ by changing all diagonal entries to~$0$, where~$A_0$ is a binary adjacency matrix of~$G_0$ and~$P$ is a symmetric~${(n \times n)}$-matrix of rank~$r$. 
    Let~${( v_i \colon i \in [n] )}$ be the enumeration of~$V(G)$ given by the ordering of~$A$. 
    Note that the column space~${\{ p_j \colon j \in [2^r] \}}$ of~$P$ (as a subspace of~$\GF(2)^n$) has size~$2^r$. 
    Let~$\zeta$ denote the map from~$V(G)$ to~$[2^r]$ that maps~$v_i$ to the unique~$j$ so that the $i$-th column of~$P$ equals~$p_j$. 
    We define the auxiliary graph~$H$ on~$[2^r]$ by letting~${j j'}$ be an edge of~$H$ if and only if there exists~${i, i' \in [n]}$ for which~${\zeta(v_i) = j}$ and~${\zeta(v_{i'}) = j'}$ such that the $i'$-th entry of~$p_i$ is equal to~$1$. 
    It is straight-forward to verify that, since~$P$ is symmetric, this definition is well-defined, and that the $(H,\zeta)$-perturbed~$G_0$ is isomorphic to~$G$. 
\end{proof}

A graph~$G$ is \defn{cyclically ordered} if~$V(G)$ is the ground set of a cyclic order~$C$. 
Analogously to ordered graphs, we say that two non-incident edges~$uv$ and~$xy$ of~$G$ \defn{cross} if~${(u,x,v,y) \in C}$. 

\begin{lem}
    \label{lemma:goodcolour}
    For any integer~$k\geq 1$, given a cyclic order~$C$ on~$V(K_{4k})$ and a partition~$(E_i \colon i \in [k])$ of~$E(K_{4k})$ there is some~${i \in [k]}$ such that the subgraph ${G_i \coloneqq (V(K_{4k}), E_i)}$ contains both: 
    \begin{itemize}
        \item a pair of non-incident edges that cross, and
        \item a pair of non-incident edges that do not cross. 
    \end{itemize}
\end{lem}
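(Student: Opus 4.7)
The plan is to argue by contradiction and bound the size of any class missing one type of non-incident pair. Suppose every $E_i$ is either an \emph{N-class} (no non-incident pair of edges crosses) or a \emph{C-class} (every non-incident pair of edges crosses); I will show any pure class has at most $8k-3$ edges and compare against the total $\binom{4k}{2}=8k^2-2k$.

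For an N-class the bound is immediate: the edges form a pairwise non-crossing family of chords of the cyclic order on $4k$ points, so together with the cyclic boundary they form a plane subgraph of a triangulation of the convex $(4k)$-gon, giving $|E_i|\le 2(4k)-3=8k-3$.

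For a C-class I aim for the stronger bound $|E_i|\le 4k$. Fix any $uv\in E_i$ and let $A=\{p_1,\ldots,p_a\}$ and $B=\{q_1,\ldots,q_b\}$ (with $a+b=4k-2$) be the two arcs into which the chord $uv$ separates the remaining vertices. The C-property forces three structural constraints: (i) every edge of $E_i$ disjoint from $\{u,v\}$ must cross $uv$, so has one endpoint in each arc (hence no edges lie entirely inside $A$ or inside $B$); (ii) a $u$-to-$A$ edge and a $v$-to-$B$ edge are automatically disjoint and non-crossing, so they cannot coexist in $E_i$, and likewise $v$-to-$A$ versus $u$-to-$B$; (iii) two non-incident $A$-$B$ edges $p_iq_j$ and $p_{i'}q_{j'}$ cross iff $(i-i')(j-j')>0$, so any pairwise-crossing, pairwise-non-incident family of $A$-$B$ edges forms a chain in the product poset $[a]\times[b]$, of length at most $a+b-1$. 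A case analysis on which of the four sets of $u$- and $v$-edges are non-empty, combined with these constraints, yields $|E_i|\le 1+a+b+1=4k$, with the interval windows that confine the $A$- and $B$-endpoints of the $A$-$B$ chain cancelling against the contributions from the $u$- and $v$-edges. (Equivalently, $E_i$ is a thrackle in convex position, and $|E_i|\le n$ is the classical straight-line thrackle bound of Hopf--Pannwitz.)

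Combining both bounds, every pure class satisfies $|E_i|\le 8k-3$, hence
\[
\binom{4k}{2}=\sum_{i=1}^k |E_i|\le k(8k-3)=8k^2-3k,
\]
contradicting $\binom{4k}{2}=8k^2-2k$. Therefore some $E_i$ must be mixed, which is the desired conclusion. The main obstacle will be the C-class bound, whose case analysis must carefully combine the chain bound on $A$-$B$ edges with the interval constraints forced on the ranges of allowed $A$- and $B$-endpoints by the $u$- and $v$-edges of $E_i$.
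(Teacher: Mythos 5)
Your proposal is correct and, at the top level, is just the paper's argument restated as a contradiction: the outerplanarity bound $|E_i|\le 8k-3$ for a crossing-free class is identical to the paper's first observation, the bound $|E_i|\le 4k$ for a class in which every non-incident pair crosses is the contrapositive of the paper's second observation (``if $|E_i|\ge 4k+1$ then $G_i$ has a non-crossing non-incident pair''), and the final pigeonhole against $\binom{4k}{2}=k(8k-2)$ is the same count. The genuine difference is how you obtain the $4k$ bound. The paper gives a short self-contained degree argument: peel $G_i$ down to minimum degree two, use $|E|>|V|$ to find a vertex $v$ of degree at least $3$, take neighbours $x,y,z$ in cyclic order, and then a second neighbour $w$ of $y$ must form a non-crossing non-incident pair with $vx$ or $vz$. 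You instead observe that such a class is precisely a straight-line thrackle on $4k$ points in convex position and invoke the classical Hopf--Pannwitz / geometric-thrackle bound $|E|\le n$; this is a legitimate shortcut that buys brevity at the cost of citing a nontrivial external result. Your alternative ``chain in $[a]\times[b]$'' sketch of that bound is where the real work would be: the crossing criterion $(i-i')(j-j')>0$ for $A$--$B$ edges is right, but you have not carried out the case analysis reconciling the $u$- and $v$-incident edges with the chain, and you acknowledge this; as written it is a plan, not a proof, so the thrackle citation is doing the heavy lifting. The paper's degree argument is the more elementary and self-contained of the two routes.
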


\begin{proof}
    Note that since an outerplanar graph on~$n$ vertices has at most~${2n-3}$ edges, if~${\lvert E_i \rvert \geq 8k-2}$ for some~${i \in [k]}$, then~$G_i$ contains a pair of edges which cross. 

    Suppose~${\lvert E_i \rvert \geq 4k+1}$.
    Consider the subgraph~$G'_i$ obtained from~$G_i$ by iteratively deleting vertices of degree less than two. 
    Observe that
    \[
        \lvert E(G'_i) \rvert 
        \geq \lvert E(G_i) \rvert  - \big( \lvert V(G_i) \rvert - \lvert V(G'_i) \rvert \big)
        > \lvert V(G'_i) \rvert.
    \]
    In particular, $G'_i$ has a vertex~$v$ of degree at least~$3$. 
    Let~${x,y,z \in V(G'_i)}$ be neighbours of $v$ such that~${(v,x,y,z) \in C}$. Since~$G'_i$ has minimum-degree at least two, $y$ has a neighbour~$w$ distinct from~$v$. 
    Since~$C$ is a cyclic order, one of~$(y,w,v)$ and~$(v,w,y)$ is in~$C$. 
    If~${(y,w,v) \in C}$, then~$yw$ and~$vx$ is a pair of non-incident edges which do not cross. 
    If~${(v,w,y) \in C}$, then~$yw$ and~$vz$ is a pair of non-incident edges which do not cross. 

    Now since~${\lvert E(K_{4k}) \rvert = \binom{4k}{2} = 2k(4k-1) = k(8k - 2)}$, 
    by the pigeonhole principle there is an~${i \in [k]}$ such that~${\lvert E_i \rvert \geq 8k-2 \geq 4k+1}$. 
    By the observations above, $G_i$ contains both of the desired pairs of edges. 
\end{proof}

\begin{lem}
    \label{lemma:kcirclegraphbalancedsep}
    Let~$G_0$ be a circle graph, let~$(H,\zeta)$ be $k$-colour-perturbation-model of~$G_0$ for some integer~${k \geq 1}$, and let~$G$ be the $(H,\zeta)$-perturbed~$G_0$. 
    If~$G$ does not contain a $K_{t+1}$-minor, then for every set~$X$ of exactly~${4k^2(4k+9)t}$ vertices~$G$ there exists an $\alpha$-balanced separation of order less than~${{k (4k + 9)t}}$ in $G$, where~${\alpha = \frac{4k-1}{4k}}$. 
\end{lem}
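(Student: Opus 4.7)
The plan is to adapt the template of \cref{lemma:orderedbalancedsep} and \cref{lemma:outerstringbalancedsep}, enlarged to accommodate the $k$ colour classes of the perturbation model via \cref{lemma:goodcolour}. I will assume for contradiction that $G$ has no $\alpha$-balanced separation for $X$ of order less than $k(4k+9)t$, and show that $G$ then contains a $K_{t+1}$-minor.

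First, I would fix a chord diagram $(C,D)$ representing $G_0$ and, by selecting one endpoint per chord, induce a cyclic order on $V(G)=D$. Along this cyclic order, I partition $X$ into $4k$ consecutive intervals $I_1,\dots,I_{4k}$, each containing exactly $k(4k+9)t$ vertices of $X$. For any separation $(A,B)$ with $I_i\subseteq A\setminus B$ and $I_j\subseteq B\setminus A$ (for $i\neq j$), each side contains at least $k(4k+9)t = |X|/(4k)$ vertices of $X$, so by \cref{obs:balancedseps} applied with $c=4k$ such a separation would be $\alpha$-balanced for $X$. Under the contradiction hypothesis, every such separation has order at least $k(4k+9)t$, so by Menger's theorem, for every pair $i\neq j$ there exists an $(I_i,I_j)$-linkage $\mathcal{L}_{ij}$ of size at least $k(4k+9)t$.

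The bulk of the proof is extracting a $K_{t+1}$-minor from this family of linkages. From each $\mathcal{L}_{ij}$ I would first apply \cref{cor:inducedlinkage} to obtain an induced sub-linkage of size at least $(4k+9)/2$. Pigeonholing on the colour pairs $(\zeta(u),\zeta(v))$ at the two endpoints of the induced paths (at most $k^2$ possibilities), I would then restrict to a sub-linkage in which all paths share a fixed endpoint colour pattern, and by a further pigeonhole retain many paths of the same length (ideally one). Choosing a representative vertex $v_i\in X\cap I_i$ from each interval yields $4k$ cyclically ordered representatives, and assigning to the edge $\{i,j\}$ of $K_{4k}$ a $[k]$-valued label read off the common colour appearing in $\mathcal{L}_{ij}$, \cref{lemma:goodcolour} supplies a colour class that contains both a non-incident crossing pair and a non-incident non-crossing pair of edges of $K_{4k}$.

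The final step, which I expect to be the main obstacle, is to translate this combinatorial dichotomy into a $K_{t+1}$-minor in $G$. In the circle-graph part of the picture, a crossing pair on the $4k$-representative polygon corresponds, via the chord diagram, to a pair of chords that actually cross (so induce an edge of $G_0$), while the fixed perturbation colour on the chosen class controls whether the corresponding edges survive or are flipped in $G$. Combining many such crossings across disjoint induced sub-linkages, in the spirit of the $K_{t,t}$-minor construction in the proof of \cref{lemma:orderedbalancedsep}, one can assemble $t+1$ pairwise adjacent branch sets; the non-crossing pair provided by \cref{lemma:goodcolour} is used to guarantee vertex-disjointness of these branch sets after all colour classes have been accounted for. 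The quantitative balance in the statement ($4k^2(4k+9)t$ vs.\ $k(4k+9)t$) is engineered precisely so that the $k$-fold pigeonhole on colours and the $\had(G)\leq t$ cost of \cref{cor:inducedlinkage} still leave enough room to produce the required $K_{t+1}$-minor and contradict the assumption on $G$.
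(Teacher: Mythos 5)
Your high-level scaffold is right — partition into $4k$ intervals along a cyclic order, apply \cref{obs:balancedseps} and Menger to get linkages between all pairs, feed a $[k]$-coloured $K_{4k}$ into \cref{lemma:goodcolour}, and extract a $K_{t+1}$-minor — but you have left the central mechanism unfilled, and in the one place where you commit to a concrete idea, it is the wrong one.

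The key structure the paper exploits, and which your proposal never introduces, is not the vertices of $G$ partitioned into cyclic intervals, but the set of chord \emph{endpoints} $\Root(X)$ partitioned into arcs $I_1,\dots,I_{4k}$ of the cycle. Setting $D_j$ to be the chords with an endpoint in $I_j$, a chord can belong to two different $D_a$, $D_b$, and the overlap $D_a\cap D_b$ is precisely the set of chords stretching between the arcs $I_a$ and $I_b$. This is what makes the whole argument work: for a large monochromatic set $F_{a,b}\subseteq D_a\cap D_b$, any two sets $F_{a,b}$ and $F_{a',b'}$ are \emph{uniformly} crossing or uniformly non-crossing depending only on whether $ab$ and $a'b'$ cross as chords of the auxiliary cyclic $K_{4k}$. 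That uniformity, together with a single loop-or-no-loop distinction in $H$, is what forces a $K_{t,t}$ subgraph from one of the two pairs produced by \cref{lemma:goodcolour}. Your version (one endpoint per chord, then intervals of $X$, then linkages $\mathcal{L}_{ij}$) does not produce any such uniformly-crossing families, because a path in $\mathcal{L}_{ij}$ can traverse chords lying in any arcs of the diagram, and $\zeta$ along it need not be controlled.

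You also skip the step that actually makes $D_a\cap D_b$ large: the paper first proves a Claim that any \emph{induced} $(D_a\setminus D_b,\, D_b\setminus D_a)$-linkage avoiding $D_a\cap D_b$ has size at most $k(2k+4)$, and then combines this with \cref{cor:inducedlinkage} and the Menger bound to conclude $|D_a\cap D_b|\ge tk$, which is what furnishes a monochromatic $F_{a,b}$ of size $\ge t$. Applying \cref{cor:inducedlinkage} directly to the Menger linkages, as you propose, only gives induced sub-linkages of size $\sim (4k+9)/2$, a quantity that does not scale with $t$ and therefore cannot produce a $K_{t+1}$-minor. Finally, your reading of \cref{lemma:goodcolour} is off: the non-crossing pair is not there to ``guarantee vertex-disjointness'' (the $F_{a,b}$ are pairwise disjoint by construction); it is a fallback so that whichever way the loop status of colour $i$ in $H$ flips crossing/non-crossing into adjacency, one of the two pairs yields the complete bipartite subgraph $K_{t,t}$.
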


\begin{proof}
    Let~$(C,D)$ be the chord-diagram of~$G_0$ and let~${s \coloneqq 4k}$. 
    Assume that there is a set~$X$ of exactly~${sk(4k+9)t}$ vertices of~$G$ such that $G$ has no $\alpha$-balanced separation of order less than~${k (4k + 9)t}$ for~$X$. 
    We show that~$G$ contains a $K_{t+1}$-minor.

    Let~$C = ( v_1, \dots, v_{2n} )$ and let~$\Root(X)$ denote the set of endpoints of the chords in~$X$. 
    Note that ${\lvert \Root(X) \rvert = 2sk(4k+9)t}$. 
    We partition~$C$ into~$s$ intervals~${I_j = (v_{\ell_j}, v_{\ell_j+1}, \dots,  v_{r_j})}$ such that~$\ell_1 = 1$,~${\ell_{j+1} = r_j +1}$, and~${\lvert I_j \cap \Root(X) \rvert = 2k(4k+9)t}$ for each~${j \in [4k]}$.
    Let~$\mathcal{I}$ denote the set of these intervals. 
    For each~${j \in [s]}$, let~$D_j$ denote the set of chords in~$D$ that have at least one endpoint in~$I_j$, and let~${X_j \coloneqq D_j \cap X}$. 
    Note that~${k(4k+9)t \leq \lvert X_j \rvert \leq 2k(4k+9)t}$. 

    We prove the following claim about induced linkages between these sets.

    \begin{claim}
        Let~${a,b \in [s]}$ with~${a < b}$. 
        Every induced $(D_a \setminus D_b, D_b \setminus D_a)$-linkage in ${G - (D_a \cap D_b)}$ has size at most~${k(2k+4)}$. 
    \end{claim}
    
    \begin{subproof}
        Suppose for contradiction that there is an induced $(D_a \setminus D_b, D_b \setminus D_a)$-linkage in~${G - (D_a \cap D_b)}$ of size at least~${k(2k+4)+1}$.
        By the pigeonhole principle, there is an induced $(D_a \setminus D_b, D_b \setminus D_a)$-linkage~$\mathcal{P}$ in~${G - (D_a \cap D_b)}$ of size at least~${2k + 5}$ such that for some~${c_1 \in [k]}$, we have~${\zeta(v) = c_1}$ for each~$v$ that is a start vertex in~$D_a$ of some path in~$\mathcal{P}$. 
        Note that for some interval $I\in \{(v_{r_a + 1}, \dots, v_{\ell_b - 1}),(v_{r_b + 1}, \dots v_{2n}, v_1, \dots, v_{\ell_a - 1})\}$, there are at least~${k + 3}$ paths in~$\mathcal{P}$ such that the endpoint of their first chord that is not contained in~$I_a$ is contained in~$I$.  
        Let~$\{x_0, y_0\}, \{x_1, y_1\}, \dots, \{x_{k+2}, y_{k+2}\}$ be the chords corresponding to the start vertices of the paths~${P_0, \dots, P_{2k+2} \in \mathcal{P}}$, such that~$(x_0,\dots,x_{k+2}) \in C$ are the endpoints in~$I_a$ and either~$(y_0, \dots, y_{k+2}) \in C$ or~$(y_{k+2}, \dots, y_0) \in C$ are the endpoints in~$I$ (depending on whether~$c_1$ has a loop in~$H$).
        Let~$B$ denote the interval of~$C$ between~$y_{0}$ and~$y_{k+2}$ that contains~$y_1$, and for each $m\in [k+1]$ let~$\{x'_m,y'_m\}$ denote the first chord in~$V(P_m)$ after~$\{x_m,y_m\}$ that does not have both endpoints in~$B$. 
        Note that such a chord exists since the final chord in~$V(P_m)$ has an endpoint in~$I_b$, which by construction is disjoint from~$B$. 
        Again by the pigeonhole principle, there are~${p,q \in [k+1]}$ with~${p < q}$ and an~${c_2 \in [k]}$ such that~${\zeta(\{ x'_p, y'_p\}) =  \zeta(\{ x'_q, y'_q\}) = c_2}$. 
        
        If for~${i \in \{p,q\}}$ we have that~$\{x'_i,y'_i\}$ has exactly one endpoint in~$B$, then it crosses exactly one of $\{x_{0},y_{0}\}$ and~$\{x_{k+2},y_{k+2}\}$. 
        This contradicts that~$\{P_{0}, P_i, P_{k+2}\}$ is an induced linkage. 
        
        So for each~${i \in \{p,q\}}$, we may assume that both~$x'_i$ and~$y'_i$ are not in~$B$. 
        By the choice of~${\{x'_{i},y'_{i}\}}$, each internal vertex of the subpath of~$P_{i}$ between~$\{x_{i},y_{i}\}$ and~$\{x'_{i},y'_{i}\}$ has both their endpoints in~$B$. 
        Let~${\{x''_i, y''_i\}}$ denote the neighbour of~${\{x'_{i},y'_{i}\}}$ on this subpath.
        Note that~${\{x'_p,y'_p\}}$ crosses~${\{x_p,y_p\}}$ if and only if it also crosses both~${\{x_0,y_0\}}$ and~${\{x_{k+2},y_{k+2}\}}$.
        Since~${\{x_0,y_0\}}$ and~${\{x'_p,y'_p\}}$ are not adjacent, we deduce that~${\{x''_p, y''_p\} \neq \{x_p,y_p\}}$.
        Hence,~${\{x''_p, y''_p\}}$ has both endpoints in~$B$, and so does not cross~$\{x'_{p},y'_{p}\}$. 
        With~${c_3 \coloneqq \zeta(\{x''_p, y''_p\})}$, we observe that~${c_2 c_3}$ is an edge of~$H$. 
        But by the same argument,~${\{x''_p, y''_p\}}$ does not cross~$\{x'_{q},y'_{q}\}$. 
        Thus~${\{x''_p, y''_p\}}$ and~$\{x'_{q},y'_{q}\}$ are adjacent, contradicting the fact that~$\mathcal{P}$ is an induced linkage.
    \end{subproof}

    For any two distinct~${a,b \in [s]}$, by Menger's Theorem, there is a ${(D_a,D_b)}$-linkage of size at least~${t {k (4k + 9)} = 2tk(2k+4) + tk}$ since, by \Cref{obs:balancedseps}, any separation~${(A,B)}$ of~$G$ with~${X_a \subseteq D_a \subseteq A}$ and~${X_b \subseteq D_b \subseteq B}$ is $\alpha$-balanced for $X$. 
    We can deduce from \Cref{cor:inducedlinkage} and the above claim that~$G$ either contains the desired $K_{t+1}$-minor, or that~$G$ does not contain a ${(D_a \setminus D_b,D_b \setminus D_a)}$-linkage of size greater than~${2tk(2k+4)}$.
    Assuming the latter holds, ${\lvert D_a \cap D_b \rvert \geq tk}$, and in particular~${D_a \cap D_b}$ contains a monochromatic subset~$F_{a,b}$ of size at least~$t$. 
    Consider the cyclic order~$C'$ on~$[s]$ with~${(1, \dots, s) \in C'}$. 
    We colour the edges of the cyclically ordered~$K_{s}$ with~$[k]$ colours by assigning to an edge~$ab$ with~${a < b}$ the colour of~$F_{a,b}$. 
    Note that if two non-incident edges~$ab$ and~$a'b'$ of this auxiliary graph cross, then every chord in~$F_{a,b}$ crosses every chord in~$F_{a',b'}$, and if they do not cross, then no edge in~$F_{a,b}$ crosses any edge in~$F_{a',b'}$. 
    By \Cref{lemma:goodcolour}, there is an~${i \in [k]}$, two non-incident edges~$a_1 b_1$ and~$a'_1 b'_1$ that cross, and two non-incident edges~$a_2 b_2$ and~$a'_2 b'_2$ that do not cross. 
    Hence, depending on whether the colour of these chords has a loop in~$H$, either~${G[F_{a_1,b_1} \cup F_{a'_1,b'_1}]}$ or~${G[F_{a_2,b_2} \cup F_{a'_2,b'_2}]}$ contains the complete bipartite graph~$K_{t,t}$ as subgraph. 
    Contracting all but one edge from a perfect matching of this subgraph yields the desired $K_{t+1}$-minor.     
\end{proof}

\begin{thm}
\label{SomeTheorem}
    For every circle graph~$G_0$ and every $k$-colour-perturbation-model~$(H,\zeta)$ of~$G_0$, the graph~$G$ which is the $(H,\zeta)$-perturbed~$G_0$ satisfies 
    \begin{align*}
        \tw(G) 
            \leq (16k^3 +40 k^2 +9 k) \had(G)
            \leq 65 \cdot k^{3} \cdot \had(G).
    \end{align*}
\end{thm}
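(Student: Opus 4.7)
The theorem follows essentially by feeding Lemma~\ref{lemma:kcirclegraphbalancedsep} into Lemma~\ref{lemma:balancedSepsForSets}, so the plan is entirely a bookkeeping exercise once we set the parameters correctly. Let $t \coloneqq \had(G)$, so that $G$ has no $K_{t+1}$-minor. By Lemma~\ref{lemma:kcirclegraphbalancedsep}, for every set $X$ of exactly $q \coloneqq 4k^2(4k+9)t$ vertices of $G$, there is an $\alpha$-balanced separation for $X$ of order less than $k(4k+9)t$, where $\alpha = \tfrac{4k-1}{4k}$.

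To apply Lemma~\ref{lemma:balancedSepsForSets}, I would set $c \coloneqq 4k$ (so that $\alpha = \tfrac{c-1}{c}$ matches) and $k_{\mathrm{sep}} \coloneqq k(4k+9)t - 1$ (so that ``order at most $k_{\mathrm{sep}}$'' matches ``order less than $k(4k+9)t$''). The hypothesis $q \ge c \cdot k_{\mathrm{sep}} + 1$ needs checking: $c\,k_{\mathrm{sep}} + 1 = 4k(k(4k+9)t - 1) + 1 = 4k^2(4k+9)t - 4k + 1 \le q$ since $k \ge 1$. Lemma~\ref{lemma:balancedSepsForSets} then gives
\[
\tw(G) \le q + k_{\mathrm{sep}} - 1 = 4k^2(4k+9)t + k(4k+9)t - 2.
\]

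The only remaining step is to simplify the coefficient. Factoring yields $4k^2(4k+9) + k(4k+9) = k(4k+9)(4k+1) = k(16k^2 + 40k + 9) = 16k^3 + 40k^2 + 9k$, which gives the first bound $\tw(G) \le (16k^3 + 40k^2 + 9k)\had(G)$ after absorbing the $-2$. For the second bound, since $k \ge 1$ we have $16k^3 + 40k^2 + 9k \le (16 + 40 + 9)k^3 = 65 k^3$.

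There is no real obstacle here: everything follows from assembling two lemmas already proved in this section (and the general separator-to-treewidth lemma from the preliminaries). The only point where care is required is choosing $c$ and $k_{\mathrm{sep}}$ so that both the $\alpha$-balance condition and the arithmetic inequality $q \ge ck_{\mathrm{sep}} + 1$ are met, and then performing the factorisation $(4k+9)(4k+1) = 16k^2 + 40k + 9$ to recognise the stated form.
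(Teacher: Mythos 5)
Your proof is correct and follows exactly the same route as the paper: apply \cref{lemma:kcirclegraphbalancedsep} with $t = \had(G)$, then feed the resulting separators into \cref{lemma:balancedSepsForSets} with $c = 4k$, and simplify via $(4k+1)k(4k+9) = 16k^3 + 40k^2 + 9k$. The paper is slightly less explicit about verifying $q \ge c\,k_{\mathrm{sep}}+1$ and simply writes $(4k+1)tk(4k+9)$ (dropping the $-2$), but the arithmetic and the approach are identical.
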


\begin{proof}
    Let~${t \coloneqq \had(G)}$ and let~${\alpha \coloneqq \frac{4k-1}{4k}}$. 
    By \Cref{lemma:kcirclegraphbalancedsep}, for every set of exactly~${4tk^2(4k+9)}$ vertices of~$G$ there exists an $\alpha$-balanced separation of order less than~${tk (4k + 9)}$ in $G$. 
    Hence, by \Cref{lemma:balancedSepsForSets}, 
    \[
        \tw(G) 
            \leq (4k + 1)tk(4k+9)
            \leq (16k^3 +40 k^2 +9 k) t
            \leq 65 k^3 t.
        \qedhere
    \]
\end{proof}

\Cref{thm:c-pertubationofcirclegraph} immediately follows from \cref{lem:rank-r-pertubation,SomeTheorem}.

\renewcommand{\thefootnote}{\arabic{footnote}}

\subsection*{Acknowledgements} 

This research was initiated at the
\href{https://www.matrix-inst.org.au/events/structural-graph-theory-downunder-III/}{Structural Graph Theory Downunder III} program of the Mathematical Research Institute MATRIX (April 2023).

{
\fontsize{10pt}{11pt}
\selectfont
\bibliographystyle{DavidNatbibStyle}
\bibliography{DavidBibliography}

\begin{thebibliography}{39}
\providecommand{\natexlab}[1]{#1}
\providecommand{\msn}[1]{MR:\,\href{http://www.ams.org/mathscinet-getitem?mr=MR{#1}}{#1}}
\providecommand{\ZBL}[1]{Zbl:\,\href{https://www.zentralblatt-math.org/zmath/en/search/?q=an:#1}{#1}}
\providecommand{\url}[1]{\texttt{#1}}
\providecommand{\urlprefix}{}
\expandafter\ifx\csname urlstyle\endcsname\relax
  \providecommand{\doi}[1]{doi:\discretionary{}{}{}#1}\else
  \providecommand{\doi}{doi:\discretionary{}{}{}\begingroup
  \urlstyle{rm}\Url}\fi

\bibitem[{Aboulker et~al.(2021)Aboulker, Adler, Kim, Sintiari, and
  Trotignon}]{AAKST21}
\textsc{Pierre Aboulker, Isolde Adler, Eun~Jung Kim, Ni~Luh~Dewi Sintiari, and
  Nicolas Trotignon}.
\newblock \href{https://doi.org/10.1016/j.ejc.2021.103394}{On the tree-width of
  even-hole-free graphs}.
\newblock \emph{European J. Combin.}, 98:\#103394, 2021.

\bibitem[{Abrishami et~al.(2024{\natexlab{a}})Abrishami, Alecu, Chudnovsky,
  Hajebi, and Spirkl}]{AACHS24}
\textsc{Tara Abrishami, Bogdan Alecu, Maria Chudnovsky, Sepehr Hajebi, and
  Sophie Spirkl}.
\newblock \href{https://doi.org/10.1016/j.jctb.2023.10.008}{Induced subgraphs
  and tree decompositions {VII}. {B}asic obstructions in {$H$}-free graphs}.
\newblock \emph{J. Combin. Theory Ser. B}, 164:443--472, 2024{\natexlab{a}}.

\bibitem[{Abrishami et~al.(2024{\natexlab{b}})Abrishami, Alecu, Chudnovsky,
  Hajebi, Spirkl, and Vušković}]{AACHSV24}
\textsc{Tara Abrishami, Bogdan Alecu, Maria Chudnovsky, Sepehr Hajebi, Sophie
  Spirkl, and Kristina Vušković}.
\newblock \href{https://doi.org/10.1002/jgt.23055}{Induced subgraphs and tree
  decompositions {V}. {O}ne neighbor in a hole}.
\newblock \emph{J. Graph Theory}, 105(4):542--561, 2024{\natexlab{b}}.

\bibitem[{Abrishami et~al.(2024{\natexlab{c}})Abrishami, Alecu, Chudnovsky,
  Hajebi, Spirkl, and Vušković}]{AACHSV24a}
\textsc{Tara Abrishami, Bogdan Alecu, Maria Chudnovsky, Sepehr Hajebi, Sophie
  Spirkl, and Kristina Vušković}.
\newblock \href{https://doi.org/10.1002/jgt.23104}{Tree independence number
  {I}. ({E}ven hole, diamond, pyramid)-free graphs}.
\newblock \emph{J. Graph Theory}, 106(4):923--943, 2024{\natexlab{c}}.

\bibitem[{Abrishami et~al.(2024{\natexlab{d}})Abrishami, Chudnovsky, Dibek,
  Hajebi, Rzążewski, Spirkl, and Vušković}]{ACDHRSV24}
\textsc{Tara Abrishami, Maria Chudnovsky, Cemil Dibek, Sepehr Hajebi, Paweł
  Rzążewski, Sophie Spirkl, and Kristina Vušković}.
\newblock \href{https://doi.org/10.1016/j.jctb.2023.10.005}{Induced subgraphs
  and tree decompositions {II}. {T}oward walls and their line graphs in graphs
  of bounded degree}.
\newblock \emph{J. Combin. Theory Ser. B}, 164:371--403, 2024{\natexlab{d}}.

\bibitem[{Abrishami et~al.(2022{\natexlab{a}})Abrishami, Chudnovsky, Hajebi,
  and Spirkl}]{ACHS22}
\textsc{Tara Abrishami, Maria Chudnovsky, Sepehr Hajebi, and Sophie Spirkl}.
\newblock \href{https://doi.org/10.19086/aic.2022.6}{Induced subgraphs and tree
  decompositions {III}. {T}hree-path-configurations and logarithmic treewidth}.
\newblock \emph{Adv. Comb.}, \#6, 2022{\natexlab{a}}.

\bibitem[{Abrishami et~al.(2023)Abrishami, Chudnovsky, Hajebi, and
  Spirkl}]{ACHS23}
\textsc{Tara Abrishami, Maria Chudnovsky, Sepehr Hajebi, and Sophie Spirkl}.
\newblock \href{https://doi.org/10.37236/11623}{Induced subgraphs and tree
  decompositions {IV}. ({E}ven hole, diamond, pyramid)-free graphs}.
\newblock \emph{Electron. J. Comb.}, 30(2), 2023.

\bibitem[{Abrishami et~al.(2025)Abrishami, Chudnovsky, Hajebi, and
  Spirkl}]{ACHS25}
\textsc{Tara Abrishami, Maria Chudnovsky, Sepehr Hajebi, and Sophie Spirkl}.
\newblock \href{https://doi.org/10.1016/j.disc.2024.114195}{Induced subgraphs
  and tree decompositions {VI.} {G}raphs with 2-cutsets}.
\newblock \emph{Discret. Math.}, 348(1):114195, 2025.

\bibitem[{Abrishami et~al.(2022{\natexlab{b}})Abrishami, Chudnovsky, and
  Vu\v{s}kovi\'{c}}]{ACV22}
\textsc{Tara Abrishami, Maria Chudnovsky, and Kristina Vu\v{s}kovi\'{c}}.
\newblock \href{https://doi.org/10.1016/j.jctb.2022.05.009}{Induced subgraphs
  and tree decompositions {I}. {E}ven-hole-free graphs of bounded degree}.
\newblock \emph{J. Combin. Theory Ser. B}, 157:144--175, 2022{\natexlab{b}}.

\bibitem[{Bonamy et~al.(2024)Bonamy, Bonnet, D{\'{e}}pr{\'{e}}s, Esperet,
  Geniet, Hilaire, Thomass{\'{e}}, and Wesolek}]{BBDEGHTW24}
\textsc{Marthe Bonamy, {\'{E}}douard Bonnet, Hugues D{\'{e}}pr{\'{e}}s, Louis
  Esperet, Colin Geniet, Claire Hilaire, St{\'{e}}phan Thomass{\'{e}}, and
  Alexandra Wesolek}.
\newblock \href{https://doi.org/10.1016/j.jctb.2024.03.003}{Sparse graphs with
  bounded induced cycle packing number have logarithmic treewidth}.
\newblock \emph{J. Combin. Theory Ser. B}, 167:215--249, 2024.

\bibitem[{Bonnet(2024)}]{Bonnet24}
\textsc{{\'{E}}douard Bonnet}.
\newblock \href{https://doi.org/10.48550/arXiv.2405.13797}{Sparse induced
  subgraphs of large treewidth}.
\newblock 2024, arXiv:2405.13797.

\bibitem[{Chekuri and Chuzhoy(2016)}]{CC16}
\textsc{Chandra Chekuri and Julia Chuzhoy}.
\newblock \href{https://doi.org/10.1145/2820609}{Polynomial bounds for the
  grid-minor theorem}.
\newblock \emph{J. ACM}, 63(5):40, 2016.

\bibitem[{Chuzhoy and Tan(2021)}]{CT21}
\textsc{Julia Chuzhoy and Zihan Tan}.
\newblock \href{https://doi.org/10.1016/j.jctb.2020.09.010}{Towards tight(er)
  bounds for the excluded grid theorem}.
\newblock \emph{J. Combin. Theory Ser. {B}}, 146:219--265, 2021.

\bibitem[{Dallard et~al.(2024)Dallard, Milani\v{c}, and \v{S}torgel}]{DMS24a}
\textsc{Cl\'{e}ment Dallard, Martin Milani\v{c}, and Kenny \v{S}torgel}.
\newblock \href{https://doi.org/10.1016/j.jctb.2023.10.006}{Treewidth versus
  clique number. {II}. {T}ree-independence number}.
\newblock \emph{J. Combin. Theory Ser. B}, 164:404--442, 2024.

\bibitem[{Davies et~al.(2023)Davies, Krawczyk, McCarty, and Walczak}]{DKMW23}
\textsc{James Davies, Tomasz Krawczyk, Rose McCarty, and Bartosz Walczak}.
\newblock \href{https://doi.org/10.1016/j.jctb.2023.02.010}{Coloring polygon
  visibility graphs and their generalizations}.
\newblock \emph{J. Combin. Theory Ser. B}, 161:268--300, 2023.

\bibitem[{Demaine et~al.(2009)Demaine, Hajiaghayi, and
  Kawarabayashi}]{DHK-Algo09}
\textsc{Erik~D. Demaine, MohammadTaghi Hajiaghayi, and {Ken-ichi}
  Kawarabayashi}.
\newblock \href{https://doi.org/10.1007/s00453-007-9138-y}{Algorithmic graph
  minor theory: improved grid minor bounds and {W}agner's contraction}.
\newblock \emph{Algorithmica}, 54(2):142--180, 2009.

\bibitem[{Duchet and Meyniel(1982)}]{DM82}
\textsc{Pierre Duchet and Henri Meyniel}.
\newblock On {H}adwiger's number and the stability number.
\newblock \emph{Annals of Discrete Mathematics}, 13:71--73, 1982.

\bibitem[{Gartland(2023)}]{Gartland23}
\textsc{Peter Gartland}.
\newblock \href{https://escholarship.org/uc/item/0kk6d2jv}{Quasi-polynomial
  time techniques for independent set and beyond in hereditary graph classes}.
\newblock Ph.D. thesis, University of California, Santa Barbara, 2023.

\bibitem[{Georgakopoulos and Papasoglu(2023)}]{GP23}
\textsc{Agelos Georgakopoulos and Panos Papasoglu}.
\newblock \href{http://arxiv.org/abs/2305.07456}{Graph minors and metric
  spaces}.
\newblock 2023, arXiv:2305.07456.

\bibitem[{Gerbner et~al.(2023)Gerbner, Methuku, Nagy, P\'{a}lv\"{o}lgyi,
  Tardos, and Vizer}]{GMNPTV23}
\textsc{D\'{a}niel Gerbner, Abhishek Methuku, D\'{a}niel~T. Nagy,
  D\"{o}m\"{o}t\"{o}r P\'{a}lv\"{o}lgyi, G\'{a}bor Tardos, and M\'{a}t\'{e}
  Vizer}.
\newblock \href{https://doi.org/10.1016/j.jctb.2022.12.006}{Tur\'{a}n problems
  for edge-ordered graphs}.
\newblock \emph{J. Combin. Theory Ser. B}, 160:66--113, 2023.

\bibitem[{Harvey and Wood(2017)}]{HW17}
\textsc{Daniel~J. Harvey and David~R. Wood}.
\newblock \href{https://doi.org/10.1002/jgt.22030}{Parameters tied to
  treewidth}.
\newblock \emph{J. Graph Theory}, 84(4):364--385, 2017.

\bibitem[{Hickingbotham et~al.(2024)Hickingbotham, Illingworth, Mohar, and
  Wood}]{HIMW24}
\textsc{Robert Hickingbotham, Freddie Illingworth, Bojan Mohar, and David~R.
  Wood}.
\newblock \href{https://dx.doi.org/10.1137/22M1542854}{Treewidth, circle graphs
  and circular drawings}.
\newblock \emph{SIAM J. Discrete Math.}, 38(1):965--987, 2024.

\bibitem[{Kawarabayashi and Kobayashi(2020)}]{KK20}
\textsc{Ken-ichi Kawarabayashi and Yusuke Kobayashi}.
\newblock \href{https://doi.org/10.1016/j.jctb.2019.07.007}{Linear min-max
  relation between the treewidth of an {$H$}-minor-free graph and its largest
  grid minor}.
\newblock \emph{J. Combin. Theory Ser. B}, 141:165--180, 2020.

\bibitem[{Kawarabayashi et~al.(2018)Kawarabayashi, Thomas, and Wollan}]{KTW18}
\textsc{Ken-ichi Kawarabayashi, Robin Thomas, and Paul Wollan}.
\newblock \href{https://doi.org/10.1016/j.jctb.2017.09.006}{A new proof of the
  flat wall theorem}.
\newblock \emph{J. Combin. Theory Ser. B}, 129:204--238, 2018.

\bibitem[{Kucheriya and Tardos(2023)}]{KT23}
\textsc{Gaurav Kucheriya and G\'{a}bor Tardos}.
\newblock \href{https://doi.org/10.1007/s00493-023-00052-5}{A characterization
  of edge-ordered graphs with almost linear extremal functions}.
\newblock \emph{Combinatorica}, 43(6):1111--1123, 2023.

\bibitem[{Leaf and Seymour(2015)}]{LeafSeymour15}
\textsc{Alexander Leaf and Paul Seymour}.
\newblock \href{https://doi.org/10.1016/j.jctb.2014.09.003}{Tree-width and
  planar minors}.
\newblock \emph{J. Combin. Theory Ser. {B}}, 111:38--53, 2015.

\bibitem[{Lozin and Razgon(2022)}]{LR22}
\textsc{Vadim Lozin and Igor Razgon}.
\newblock \href{https://doi.org/10.1016/j.ejc.2022.103517}{Tree-width
  dichotomy}.
\newblock \emph{European J. Combin.}, 103:\# 103517, 2022.

\bibitem[{McCarty(2021)}]{McCarty21}
\textsc{Rose McCarty}.
\newblock \href{http://hdl.handle.net/10012/17633}{Local structure for
  vertex-minors}.
\newblock Ph.D. thesis, University of Waterloo, 2021.

\bibitem[{Pecaninovic(2019)}]{Tarik}
\textsc{Tarik Pecaninovic}.
\newblock Complete graph minors in strong products.
\newblock 2019.
\newblock Honours thesis, School of Mathematics, Monash University.

\bibitem[{Pilipczuk et~al.(2021)Pilipczuk, Sintiari, Thomass{\'{e}}, and
  Trotignon}]{PSTT21}
\textsc{Marcin Pilipczuk, Ni~Luh~Dewi Sintiari, St{\'{e}}phan Thomass{\'{e}},
  and Nicolas Trotignon}.
\newblock \href{https://doi.org/10.1002/jgt.22675}{({T}heta, triangle)-free and
  (even hole, {$K_4$})-free graphs. {P}art 2: {B}ounds on treewidth}.
\newblock \emph{J. Graph Theory}, 97(4):624--641, 2021.

\bibitem[{Raussendorf and Briegel(2001)}]{RB2001}
\textsc{Robert Raussendorf and Hans~J. Briegel}.
\newblock \href{https://link.aps.org/doi/10.1103/PhysRevLett.86.5188}{A one-way
  quantum computer}.
\newblock \emph{Phys. Rev. Lett.}, 86:5188--5191, 2001.

\bibitem[{Reed(1997)}]{Reed97}
\textsc{Bruce~A. Reed}.
\newblock \href{https://doi.org/10.1017/CBO9780511662119.006}{Tree width and
  tangles: a new connectivity measure and some applications}.
\newblock In \textsc{R.~A. Bailey}, ed., \emph{Surveys in Combinatorics}, vol.
  241 of \emph{London Math. Soc. Lecture Note Ser.}, pp. 87--162. Cambridge
  Univ. Press, 1997.

\bibitem[{Robertson and Seymour(1986)}]{RS-V}
\textsc{Neil Robertson and Paul Seymour}.
\newblock \href{https://doi.org/10.1016/0095-8956(86)90030-4}{Graph minors.
  {V}. {E}xcluding a planar graph}.
\newblock \emph{J. Combin. Theory Ser. B}, 41(1):92--114, 1986.

\bibitem[{Robertson and Seymour(1991)}]{RS-X}
\textsc{Neil Robertson and Paul Seymour}.
\newblock \href{https://doi.org/10.1016/0095-8956(91)90061-N}{Graph minors.
  {X}. {O}bstructions to tree-decomposition}.
\newblock \emph{J. Combin. Theory Ser. B}, 52(2):153--190, 1991.

\bibitem[{Robertson and Seymour(1995)}]{RS-XIII}
\textsc{Neil Robertson and Paul Seymour}.
\newblock \href{https://doi.org/10.1006/jctb.1995.1006}{Graph minors. {XIII}.
  {T}he disjoint paths problem}.
\newblock \emph{J. Combin. Theory Ser. B}, 63(1):65--110, 1995.

\bibitem[{Robertson et~al.(1994)Robertson, Seymour, and Thomas}]{RST94}
\textsc{Neil Robertson, Paul Seymour, and Robin Thomas}.
\newblock \href{https://doi.org/10.1006/jctb.1994.1073}{Quickly excluding a
  planar graph}.
\newblock \emph{J. Combin. Theory Ser. B}, 62(2):323--348, 1994.

\bibitem[{Schaefer(2022)}]{Schaefer22}
\textsc{Marcus Schaefer}.
\newblock \href{https://www.combinatorics.org/DS21}{The graph crossing number
  and its variants: A survey}.
\newblock \emph{Electron. J. Combin.}, \#DS21, 2022.

\bibitem[{Sintiari and Trotignon(2021)}]{ST21}
\textsc{Ni~Luh~Dewi Sintiari and Nicolas Trotignon}.
\newblock \href{https://doi.org/10.1002/jgt.22666}{({T}heta, triangle)-free and
  (even hole, {$K_4$})-free graphs---{P}art 1: {L}ayered wheels}.
\newblock \emph{J. Graph Theory}, 97(4):475--509, 2021.

\bibitem[{Tardos(2019)}]{Tardos19}
\textsc{G\'{a}bor Tardos}.
\newblock \href{https://doi.org/10.1017/9781108649094.008}{Extremal theory of
  vertex or edge ordered graphs}.
\newblock In \emph{Surveys in Combinatorics}, vol. 456 of \emph{London Math.
  Soc. Lecture Note Ser.}, pp. 221--236. Cambridge Univ. Press, 2019.
\end{thebibliography}
}

\begin{appendices}
\section{}

Here we prove the following result promised in \cref{sec:InducedGridMinors}.

\begin{thm}
   For every positive integer~$t$, every graph that contains a ${(2t\times (4t-1))}$-grid as an induced minor contains a planar induced subgraph with treewidth at least~$t$. 
\end{thm}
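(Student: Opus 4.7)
The plan is to construct, inside the given induced minor model of the $(2t\times(4t-1))$-grid, an \emph{induced subdivision} of the $(t\times t)$-grid as an induced subgraph of $G$. Since every subdivision of the $(t\times t)$-grid is planar and contracts to the $(t\times t)$-grid (which has treewidth $t$), this yields a planar induced subgraph of $G$ with treewidth at least $t$, as required.

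Let $(X_{i,j})_{(i,j)\in[2t]\times[4t-1]}$ be the induced minor model. I would designate $t^2$ \emph{node} branch sets at positions $(2i-1,\,4j-2)$ for $(i,j)\in[t]^2$ --- that is, every other row and every fourth column --- so that each pair of horizontally adjacent nodes is separated by three intermediate branch sets and each pair of vertically adjacent nodes by one. For each node $(i,j)$ I pick a representative $v_{i,j}\in X_{2i-1,\,4j-2}$, and for each edge of the target $(t\times t)$-grid I build a shortest induced path in $G$ between the two corresponding representatives, restricted to the union of the appropriate node and intermediate branch sets (this union induces a connected subgraph, so shortest paths exist and, being shortest in a restricted induced subgraph, are induced in $G$). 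The resulting induced subgraph $H$ of $G$ on all these vertices will be an induced subdivision of the $(t\times t)$-grid once one verifies that (a) distinct paths are internally vertex-disjoint, (b) each path is chordless in $G$, and (c) no ``extra'' edges appear between different paths in the induced subgraph. Condition (b) is immediate from the shortest-path choice. Condition (c) follows from the induced-minor hypothesis together with the generous spacing: internal vertices of distinct subdivision paths always lie in branch sets at big-grid distance at least $2$, and no edges exist in $G$ between non-adjacent branch sets of the model.

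The main obstacle is to ensure (a), and more subtly to ensure that no extra edge links a node representative $v_{i,j}$ to a non-first vertex of an incident path. For the shortest paths to use only $v_{i,j}$ from each node branch set, the representative must be adjacent to each of its (up to four) neighbouring intermediate branch sets --- otherwise the path detours through extra vertices of $X_{2i-1,\,4j-2}$, which may then interact with paths going in other directions and break (a). If no single vertex of $X_{2i-1,\,4j-2}$ is simultaneously adjacent to all required directions, the fix is to replace the single representative $v_{i,j}$ by a small induced substructure of $G[X_{2i-1,\,4j-2}]$ providing one attachment vertex per required direction. The three buffer columns per horizontal edge (the source of the constant $4t-1$) give the flexibility required to carry this out via a greedy or extremal selection. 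Once the (possibly node-fattened) induced subdivision is in place, planarity is clear and contracting each subdivision path produces the $(t\times t)$-grid as a minor, so the treewidth of $H$ is at least $t$, completing the proof.
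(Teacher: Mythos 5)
Your outline diverges from the paper's proof in a way that creates a genuine gap. You aim to extract an induced subdivision of the $(t\times t)$-grid, whose interior vertices have degree~$4$. The paper instead deliberately extracts a \emph{wall}-like structure of maximum degree~$3$ (the set $S = S_1 \cup S_2 \cup S_3 \cup S_4$, consisting of two full columns, all odd rows, and carefully offset rungs), and this degree restriction is not cosmetic --- it is the load-bearing idea.

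The problem surfaces exactly at the point you flag and then wave away: the ``node-fattening.'' Once you replace a single representative $v_{i,j}$ by a connected induced substructure of $G[X_{2i-1,4j-2}]$ with one attachment vertex per direction, the resulting graph is no longer a subdivision, and its planarity is \emph{not} ``clear.'' Concretely, suppose the minimal connected subset of $X_{2i-1,4j-2}$ meeting all four attachment requirements consists of four vertices $a,b,c,d$, each the unique one adjacent to one of the four directions, and $G[\{a,b,c,d\}] \cong K_4$. No vertex can be deleted (each carries a unique attachment), so minimality does not shrink this. But $K_4$ has a unique embedding up to homeomorphism with only three vertices on any face, so you cannot place all four attachment vertices on the outer boundary to route the four incident grid-paths; the induced subgraph you build may fail to be planar. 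Equivalently, adding an auxiliary vertex $z$ adjacent to the four attachment endpoints and contracting each pendant edge onto $\{a,b,c,d\}$ yields a $K_5$-minor, which is precisely the obstruction the paper's argument rules out. The paper rules it out by working with degree-$3$ nodes: there, minimality plus Wagner's theorem shows the local graph $G[X_{i,j}\cup\{u,v,w\}]$ together with the auxiliary vertex $z$ adjacent to $u,v,w$ is planar, hence $u,v,w$ can be drawn on a common face, and the global embedding patches together. Your three buffer columns control interference \emph{between} paths, but they do nothing to control the internal structure of a node branch set, which is where the planarity argument must actually be won.

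To salvage your approach you would need to (i) pass from the grid to a wall (or otherwise cap the node degree at $3$) before building the subdivision, and (ii) replace ``planarity is clear'' with the minimality-plus-Wagner argument the paper gives for the degree-$3$ local pieces. As written, the proposal does not establish planarity of the output, and there are graphs on which the construction as described produces a non-planar induced subgraph.
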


\begin{proof}
    If a graph~$G$ contains a ${(2t \times (4t-1))}$-grid as an induced minor, then there is a set ${\{H_{i,j} \colon (i,j) \in [2t]\times [4t-1]\}}$ of pairwise disjoint connected subgraphs of~$G$ such that there is an edge of~$G$ between distinct~$H_{i,j}$ and~$H_{i',j'}$ if and only if~${\{|i-i'|,|j-j'|\} = \{0,1\}}$.
    Let 
    \begin{align*}    
        S_1 &\coloneqq[2t]\times \{1,4t-1\}, \\
        S_2 &\coloneqq\{(i,j)\in [2t]\times [4t-1] \colon i \textrm{ is odd}\}, \\
        S_3 &\coloneqq\{(i,j)\in [2t]\times [4t-1] \colon i\equiv 2\mod 4,\quad j\equiv 3\mod 4\}, \\
        S_4 &\coloneqq\{(i,j)\in [2t]\times [4t-1] \colon i\equiv 0\mod 4,\quad j\equiv 1\mod 4\},\\
        S &\coloneqq S_1\cup S_2\cup S_3\cup S_4.
    \end{align*}
    The minor~$H$ obtained from~$G$ by deleting all vertices not in~${\bigcup \{ V(H_{i,j}) \colon (i,j) \in S \}}$ and then contracting all edges in~${\bigcup \{ E(H_{i,j}) \colon (i,j) \in S \}}$ has maximum degree~$3$, and additionally every neighbour of a degree~$3$ vertex in~$H$ has degree~$2$. 
    It is easily seen that~$H$ contains a ${t \times t}$-grid as a minor, and therefore has treewidth at least~$t$ (see \citep{HW17} for example). 
    We now show that~$G$ has a planar induced subgraph which contains~$H$ as a minor.
    
    Let~${\{X_{i,j} \colon (i,j) \in S\}}$ be a collection of subsets of~$V(G)$ with the following properties.
    \begin{enumerate}
        \item\label{item:appendix1} for all $(i,j)\in S$, and $G[X_{i,j}]$ is a connected subgraph of $V(H_{i,j})$,
        \item\label{item:appendix2} the graph obtained from $G$ by deleting all vertices not in $\bigcup \{X_{i,j} \colon (i,j) \in S\}$ and then contracting all edges in $\bigcup \{E(G[X_{i,j}]) \colon (i,j)\in S\}$ is $H$,
        \item\label{item:appendix3} subject to the other two conditions $|\bigcup \{X_{i,j} \colon (i,j)\in S\}|$ is as small as possible.    
    \end{enumerate}
    Consider a set~$X_{i,j}$ corresponding to a degree-$2$ vertex of~$H$, and let~$X_{a,b}$ and~$X_{c,d}$ be the sets corresponding to its neighbours. 
    The minimality condition ensures that~${G[X_{i,j}]}$ is exactly an ${(N_G(X_{a,b}),N_G(X_{c,d}))}$-path. 
    Now consider a set~$X_{i,j}$, and let~$u$, $v$, and~$w$ be the three unique vertices in~$N_G(X_{i,j})$ in sets corresponding to neighbours of~$X_{i,j}$ in~$H$.
    Let~$H'$ be the graph obtained from~${G[X_{i,j} \cup \{u,v,w\}]}$ by adding a new vertex~$z$ adjacent to~$u$, $v$, and~$w$. 
    If~$H'$ contains~$K_{3,3}$ or~$K_5$ as a minor, then we can easily find a proper subset of~$X_{i,j}$ which maintains Properties \ref{item:appendix1} and \ref{item:appendix2}, contradicting Property~\ref{item:appendix3}.
    Thus by Wagner's Theorem, $H'$ is planar, which means~${G[X_{i,j} \cup \{u,v,w\}]}$ can be drawn in the plane with~$u$, $v$, and~$w$ on the outer face.
    Thus~${G[\bigcup \{X_{i,j} \colon (i,j) \in S\}]}$ is the desired induced planar subgraph. 
\end{proof}
\end{appendices}
\end{document}